\chardef\@x10\chardef\@xv60
\def\tcitime{
\def\@time{%
  \@minute\time\@hour\@minute\divide\@hour\@xv
  \ifnum\@hour<\@x 0\fi\the\@hour:%
  \multiply\@hour\@xv\advance\@minute-\@hour
  \ifnum\@minute<\@x 0\fi\the\@minute
  }}%
\def\QCTOpt[#1]#2{%
  \def\QCTOptB{#1}
  \def\QCTOptA{#2}
}
\def\QCTNOpt#1{%
  \def\QCTOptA{#1}
  \let\QCTOptB\empty
}
\def\Qct{%
  \@ifnextchar[{%
    \QCTOpt}{\QCTNOpt}
}
\def\QCBOpt[#1]#2{%
  \def\QCBOptB{#1}
  \def\QCBOptA{#2}
}
\def\QCBNOpt#1{%
  \def\QCBOptA{#1}
  \let\QCBOptB\empty
}
\def\Qcb{%
  \@ifnextchar[{%
    \QCBOpt}{\QCBNOpt}
}
\def\PrepCapArgs{%
  \ifx\QCBOptA\empty
    \ifx\QCTOptA\empty
      {}%
    \else
      \ifx\QCTOptB\empty
        {\QCTOptA}%
      \else
        [\QCTOptB]{\QCTOptA}%
      \fi
    \fi
  \else
    \ifx\QCBOptA\empty
      {}%
    \else
      \ifx\QCBOptB\empty
        {\QCBOptA}%
      \else
        [\QCBOptB]{\QCBOptA}%
      \fi
    \fi
  \fi
}
\def\GRAPHICSPS#1{%
 \ifcase\GRAPHICSTYPE
   \special{ps: #1}%
 \or
   \special{language "PS", include "#1"}%
 \fi
}%
\def\graffile#1#2#3#4{%
    \leavevmode
    \raise -#4 \BOXTHEFRAME{%
        \hbox to #2{\raise #3\hbox to #2{\null #1\hfil}}}%
}%
\def\draftbox#1#2#3#4{%
 \leavevmode\raise -#4 \hbox{%
  \frame{\rlap{\protect\tiny #1}\hbox to #2%
   {\vrule height#3 width\z@ depth\z@\hfil}%
  }%
 }%
}%
\newif\ifwasdraft
\def\GRAPHIC#1#2#3#4#5{%
 \ifnum\draft=\@ne\draftbox{#2}{#3}{#4}{#5}%
  \else\graffile{#1}{#3}{#4}{#5}%
  \fi
 }%
\def\addtoLaTeXparams#1{%
    \edef\LaTeXparams{\LaTeXparams #1}}%
\newif\ifBoxFrame \BoxFramefalse
\newif\ifOverFrame \OverFramefalse
\newif\ifUnderFrame \UnderFramefalse
\def\BOXTHEFRAME#1{%
   \hbox{%
      \ifBoxFrame
         \frame{#1}%
      \else
         {#1}%
      \fi
   }%
}
\def\doFRAMEparams#1{\BoxFramefalse\OverFramefalse\UnderFramefalse\readFRAMEparams#1\end}%
\def\readFRAMEparams#1{%
 \ifx#1\end%
  \let\next=\relax
  \else
  \ifx#1i\dispkind=\z@\fi
  \ifx#1d\dispkind=\@ne\fi
  \ifx#1f\dispkind=\tw@\fi
  \ifx#1t\addtoLaTeXparams{t}\fi
  \ifx#1b\addtoLaTeXparams{b}\fi
  \ifx#1p\addtoLaTeXparams{p}\fi
  \ifx#1h\addtoLaTeXparams{h}\fi
  \ifx#1X\BoxFrametrue\fi
  \ifx#1O\OverFrametrue\fi
  \ifx#1U\UnderFrametrue\fi
  \ifx#1w
    \ifnum\draft=1\wasdrafttrue\else\wasdraftfalse\fi
    \draft=\@ne
  \fi
  \let\next=\readFRAMEparams
  \fi
 \next
 }%
\def\IFRAME#1#2#3#4#5#6{%
      \bgroup
      \let\QCTOptA\empty
      \let\QCTOptB\empty
      \let\QCBOptA\empty
      \let\QCBOptB\empty
      #6%
      \parindent=0pt%
      \leftskip=0pt
      \rightskip=0pt
      \setbox0 = \hbox{\QCBOptA}%
      \@tempdima = #1\relax
      \ifOverFrame
          \typeout{This is not implemented yet}%
          \show\HELP
      \else
         \ifdim\wd0>\@tempdima
            \advance\@tempdima by \@tempdima
            \ifdim\wd0 >\@tempdima
               \textwidth=\@tempdima
               \setbox1 =\vbox{%
                  \noindent\hbox to \@tempdima{\hfill\GRAPHIC{#5}{#4}{#1}{#2}{#3}\hfill}\\%
                  \noindent\hbox to \@tempdima{\parbox[b]{\@tempdima}{\QCBOptA}}%
               }%
               \wd1=\@tempdima
            \else
               \textwidth=\wd0
               \setbox1 =\vbox{%
                 \noindent\hbox to \wd0{\hfill\GRAPHIC{#5}{#4}{#1}{#2}{#3}\hfill}\\%
                 \noindent\hbox{\QCBOptA}%
               }%
               \wd1=\wd0
            \fi
         \else
            \ifdim\wd0>0pt
              \hsize=\@tempdima
              \setbox1 =\vbox{%
                \unskip\GRAPHIC{#5}{#4}{#1}{#2}{0pt}%
                \break
                \unskip\hbox to \@tempdima{\hfill \QCBOptA\hfill}%
              }%
              \wd1=\@tempdima
           \else
              \hsize=\@tempdima
              \setbox1 =\vbox{%
                \unskip\GRAPHIC{#5}{#4}{#1}{#2}{0pt}%
              }%
              \wd1=\@tempdima
           \fi
         \fi
         \@tempdimb=\ht1
         \advance\@tempdimb by \dp1
         \advance\@tempdimb by -#2%
         \advance\@tempdimb by #3%
         \leavevmode
         \raise -\@tempdimb \hbox{\box1}%
      \fi
      \egroup%
}%
\def\DFRAME#1#2#3#4#5{%
 \begin{center}
     \let\QCTOptA\empty
     \let\QCTOptB\empty
     \let\QCBOptA\empty
     \let\QCBOptB\empty
     \ifOverFrame 
        #5\QCTOptA\par
     \fi
     \GRAPHIC{#4}{#3}{#1}{#2}{\z@}
     \ifUnderFrame 
        \nobreak\par #5\QCBOptA
     \fi
 \end{center}%
 }%
\def\FFRAME#1#2#3#4#5#6#7{%
 \begin{figure}[#1]%
  \let\QCTOptA\empty
  \let\QCTOptB\empty
  \let\QCBOptA\empty
  \let\QCBOptB\empty
  \ifOverFrame
    #4
    \ifx\QCTOptA\empty
    \else
      \ifx\QCTOptB\empty
        \caption{\QCTOptA}%
      \else
        \caption[\QCTOptB]{\QCTOptA}%
      \fi
    \fi
    \ifUnderFrame\else
      \label{#5}%
    \fi
  \else
    \UnderFrametrue%
  \fi
  \begin{center}\GRAPHIC{#7}{#6}{#2}{#3}{\z@}\end{center}%
  \ifUnderFrame
    #4
    \ifx\QCBOptA\empty
      \caption{}%
    \else
      \ifx\QCBOptB\empty
        \caption{\QCBOptA}%
      \else
        \caption[\QCBOptB]{\QCBOptA}%
      \fi
    \fi
    \label{#5}%
  \fi
  \end{figure}%
 }%
\def\makeactives{
  \catcode`\"=\active
  \catcode`\;=\active
  \catcode`\:=\active
  \catcode`\'=\active
  \catcode`\~=\active
}
   \gdef\activesoff{%
      \def"{\string"}
      \def;{\string;}
      \def:{\string:}
      \def'{\string'}
      \def~{\string~}
    }
\def\FRAME#1#2#3#4#5#6#7#8{%
 \bgroup
 \@ifundefined{bbl@deactivate}{}{\activesoff}
 \ifnum\draft=\@ne
   \wasdrafttrue
 \else
   \wasdraftfalse%
 \fi
 \def\LaTeXparams{}%
 \dispkind=\z@
 \def\LaTeXparams{}%
 \doFRAMEparams{#1}%
 \ifnum\dispkind=\z@\IFRAME{#2}{#3}{#4}{#7}{#8}{#5}\else
  \ifnum\dispkind=\@ne\DFRAME{#2}{#3}{#7}{#8}{#5}\else
   \ifnum\dispkind=\tw@
    \edef\@tempa{\noexpand\FFRAME{\LaTeXparams}}%
    \@tempa{#2}{#3}{#5}{#6}{#7}{#8}%
    \fi
   \fi
  \fi
  \ifwasdraft\draft=1\else\draft=0\fi{}%
  \egroup
 }%
\def\TEXUX#1{"texux"}
\long\def\QQQ#1#2{%
     \long\expandafter\def\csname#1\endcsname{#2}}%
\long\def\QQA#1#2{}%
\def\QTR#1#2{{\csname#1\endcsname #2}}
\def\EXPAND#1[#2]#3{}%
\def\NOEXPAND#1[#2]#3{}%
\def\LaTeXparent#1{}%
\def\ChildStyles#1{}%
\def\ChildDefaults#1{}%
\def\QTagDef#1#2#3{}%
\def\QQfnmark#1{\footnotemark}
\def\makeatletter\input gnuindex.sty\makeatother\makeindex{\makeatletter\input gnuindex.sty\makeatother\makeindex}%
\def\initial#1{\bigbreak{\raggedright\large\bf #1}\kern 2\p@\penalty3000}}%
 \def\abstract{%
  \if@twocolumn
   \section*{Abstract (Not appropriate in this style!)}%
   \else \small 
   \begin{center}{\bf Abstract\vspace{-.5em}\vspace{\z@}}\end{center}%
   \quotation 
   \fi
  }%
   \def\registered{\relax\ifmmode{}\r@gistered
                    \else$\m@th\r@gistered$\fi}%
 \def\r@gistered{^{\ooalign
  {\hfil\raise.07ex\hbox{$\scriptstyle\rm\text{R}$}\hfil\crcr
  \mathhexbox20D}}}}{}%
\newdimen\theight
\def\Column{%
 \vadjust{\setbox\z@=\hbox{\scriptsize\quad\quad tcol}%
  \theight=\ht\z@\advance\theight by \dp\z@\advance\theight by \lineskip
  \kern -\theight \vbox to \theight{%
   \rightline{\rlap{\box\z@}}%
   \vss
   }%
  }%
 }%
\def\qed{%
 \ifhmode\unskip\nobreak\fi\ifmmode\ifinner\else\hskip5\p@\fi\fi
 \hbox{\hskip5\p@\vrule width4\p@ height6\p@ depth1.5\p@\hskip\p@}%
 }%
\def\miss{\hbox{\vrule height2\p@ width 2\p@ depth\z@}}%
\def\tcol#1{{\baselineskip=6\p@ \vcenter{#1}} \Column}  %
\def\newfmtname{LaTeX2e}
\def\chkcompat{%
   \if@compatibility
   \else
     \usepackage{latexsym}
   \fi
}
  \DeclareOldFontCommand{\rm}{\normalfont\rmfamily}{\mathrm}
  \DeclareOldFontCommand{\sf}{\normalfont\sffamily}{\mathsf}
  \DeclareOldFontCommand{\tt}{\normalfont\ttfamily}{\mathtt}
  \DeclareOldFontCommand{\bf}{\normalfont\bfseries}{\mathbf}
  \DeclareOldFontCommand{\it}{\normalfont\itshape}{\mathit}
  \DeclareOldFontCommand{\sl}{\normalfont\slshape}{\@nomath\sl}
  \DeclareOldFontCommand{\sc}{\normalfont\scshape}{\@nomath\sc}
\def\alpha{{\Greekmath 010B}}%
\def\beta{{\Greekmath 010C}}%
\def\gamma{{\Greekmath 010D}}%
\def\delta{{\Greekmath 010E}}%
\def\epsilon{{\Greekmath 010F}}%
\def\zeta{{\Greekmath 0110}}%
\def\eta{{\Greekmath 0111}}%
\def\theta{{\Greekmath 0112}}%
\def\iota{{\Greekmath 0113}}%
\def\kappa{{\Greekmath 0114}}%
\def\lambda{{\Greekmath 0115}}%
\def\mu{{\Greekmath 0116}}%
\def\nu{{\Greekmath 0117}}%
\def\xi{{\Greekmath 0118}}%
\def\pi{{\Greekmath 0119}}%
\def\rho{{\Greekmath 011A}}%
\def\sigma{{\Greekmath 011B}}%
\def\tau{{\Greekmath 011C}}%
\def\upsilon{{\Greekmath 011D}}%
\def\phi{{\Greekmath 011E}}%
\def\chi{{\Greekmath 011F}}%
\def\psi{{\Greekmath 0120}}%
\def\omega{{\Greekmath 0121}}%
\def\varepsilon{{\Greekmath 0122}}%
\def\vartheta{{\Greekmath 0123}}%
\def\varpi{{\Greekmath 0124}}%
\def\varrho{{\Greekmath 0125}}%
\def\varsigma{{\Greekmath 0126}}%
\def\varphi{{\Greekmath 0127}}%
\def\nabla{{\Greekmath 0272}}
\def\FindBoldGroup{%
   {\setbox0=\hbox{$\mathbf{x\global\edef\theboldgroup{\the\mathgroup}}$}}%
}
\def\Greekmath#1#2#3#4{%
    \if@compatibility
        \ifnum\mathgroup=\symbold
           \mathchoice{\mbox{\boldmath$\displaystyle\mathchar"#1#2#3#4$}}%
                      {\mbox{\boldmath$\textstyle\mathchar"#1#2#3#4$}}%
                      {\mbox{\boldmath$\scriptstyle\mathchar"#1#2#3#4$}}%
                      {\mbox{\boldmath$\scriptscriptstyle\mathchar"#1#2#3#4$}}%
        \else
           \mathchar"#1#2#3#4%
        \fi 
    \else 
        \FindBoldGroup
        \ifnum\mathgroup=\theboldgroup 
           \mathchoice{\mbox{\boldmath$\displaystyle\mathchar"#1#2#3#4$}}%
                      {\mbox{\boldmath$\textstyle\mathchar"#1#2#3#4$}}%
                      {\mbox{\boldmath$\scriptstyle\mathchar"#1#2#3#4$}}%
                      {\mbox{\boldmath$\scriptscriptstyle\mathchar"#1#2#3#4$}}%
        \else
           \mathchar"#1#2#3#4%
        \fi     	    
	  \fi}
\newif\ifGreekBold  \GreekBoldfalse
\let\SAVEPBF=\pbf
\def\pbf{\GreekBoldtrue\SAVEPBF}%
  \newcounter{equationnumber}  
  \def\mathletters{%
     \addtocounter{equation}{1}
     \edef\@currentlabel{\theequation}%
     \setcounter{equationnumber}{\c@equation}
     \setcounter{equation}{0}%
     \edef\theequation{\@currentlabel\noexpand\alph{equation}}%
  }
    \def\BibTeX{{\rm B\kern-.05em{\sc i\kern-.025em b}\kern-.08em
                 T\kern-.1667em\lower.7ex\hbox{E}\kern-.125emX}}}{}%
\def\AmS{{\protect\usefont{OMS}{cmsy}{m}{n}%
                A\kern-.1667em\lower.5ex\hbox{M}\kern-.125emS}}}{}%
\let\DOTSI\relax
\def\RIfM@{\relax\ifmmode}%
\def\FN@{\futurelet\next}%
\def\iint{\DOTSI\intno@\tw@\FN@\ints@}%
\def\iiint{\DOTSI\intno@\thr@@\FN@\ints@}%
\def\iiiint{\DOTSI\intno@4 \FN@\ints@}%
\def\idotsint{\DOTSI\intno@\z@\FN@\ints@}%
\def\ints@{\findlimits@\ints@@}%
\newif\iflimtoken@
\newif\iflimits@
\def\findlimits@{\limtoken@true\ifx\next\limits\limits@true
 \else\ifx\next\nolimits\limits@false\else
 \limtoken@false\ifx\ilimits@\nolimits\limits@false\else
 \ifinner\limits@false\else\limits@true\fi\fi\fi\fi}%
\def\multint@{\int\ifnum\intno@=\z@\intdots@                          
 \else\intkern@\fi                                                    
 \ifnum\intno@>\tw@\int\intkern@\fi                                   
 \ifnum\intno@>\thr@@\int\intkern@\fi                                 
 \int}
\def\multintlimits@{\intop\ifnum\intno@=\z@\intdots@\else\intkern@\fi
 \ifnum\intno@>\tw@\intop\intkern@\fi
 \ifnum\intno@>\thr@@\intop\intkern@\fi\intop}%
\def\intic@{%
    \mathchoice{\hskip.5em}{\hskip.4em}{\hskip.4em}{\hskip.4em}}%
\def\negintic@{\mathchoice
 {\hskip-.5em}{\hskip-.4em}{\hskip-.4em}{\hskip-.4em}}%
\def\ints@@{\iflimtoken@                                              
 \def\ints@@@{\iflimits@\negintic@
   \mathop{\intic@\multintlimits@}\limits                             
  \else\multint@\nolimits\fi                                          
  \eat@}
 \else                                                                
 \def\ints@@@{\iflimits@\negintic@
  \mathop{\intic@\multintlimits@}\limits\else
  \multint@\nolimits\fi}\fi\ints@@@}%
\def\intkern@{\mathchoice{\!\!\!}{\!\!}{\!\!}{\!\!}}%
\def\plaincdots@{\mathinner{\cdotp\cdotp\cdotp}}%
\def\intdots@{\mathchoice{\plaincdots@}%
 {{\cdotp}\mkern1.5mu{\cdotp}\mkern1.5mu{\cdotp}}%
 {{\cdotp}\mkern1mu{\cdotp}\mkern1mu{\cdotp}}%
 {{\cdotp}\mkern1mu{\cdotp}\mkern1mu{\cdotp}}}%
\def\RIfM@{\relax\protect\ifmmode}
\def\text{\RIfM@\expandafter\text@\else\expandafter\mbox\fi}
\let\nfss@text\text
\def\text@#1{\mathchoice
   {\textdef@\displaystyle\f@size{#1}}%
   {\textdef@\textstyle\tf@size{\firstchoice@false #1}}%
   {\textdef@\textstyle\sf@size{\firstchoice@false #1}}%
   {\textdef@\textstyle \ssf@size{\firstchoice@false #1}}%
   \glb@settings}
\def\textdef@#1#2#3{\hbox{{%
                    \everymath{#1}%
                    \let\f@size#2\selectfont
                    #3}}}
\newif\iffirstchoice@
\def\Let@{\relax\iffalse{\fi\let\\=\cr\iffalse}\fi}%
\def\vspace@{\def\vspace##1{\crcr\noalign{\vskip##1\relax}}}%
\def\multilimits@{\bgroup\vspace@\Let@
 \baselineskip\fontdimen10 \scriptfont\tw@
 \advance\baselineskip\fontdimen12 \scriptfont\tw@
 \lineskip\thr@@\fontdimen8 \scriptfont\thr@@
 \lineskiplimit\lineskip
 \vbox\bgroup\ialign\bgroup\hfil$\m@th\scriptstyle{##}$\hfil\crcr}%
\def\Sb{_\multilimits@}%
\def\endSb{\crcr\egroup\egroup\egroup}%
\def\Sp{^\multilimits@}%
\newdimen\ex@
\def\rightarrowfill@#1{$#1\m@th\mathord-\mkern-6mu\cleaders
 \hbox{$#1\mkern-2mu\mathord-\mkern-2mu$}\hfill
 \mkern-6mu\mathord\rightarrow$}%
\def\leftarrowfill@#1{$#1\m@th\mathord\leftarrow\mkern-6mu\cleaders
 \hbox{$#1\mkern-2mu\mathord-\mkern-2mu$}\hfill\mkern-6mu\mathord-$}%
\def\leftrightarrowfill@#1{$#1\m@th\mathord\leftarrow
\mkern-6mu\cleaders
 \hbox{$#1\mkern-2mu\mathord-\mkern-2mu$}\hfill
 \mkern-6mu\mathord\rightarrow$}%
\def\overrightarrow{\mathpalette\overrightarrow@}%
\def\overrightarrow@#1#2{\vbox{\ialign{##\crcr\rightarrowfill@#1\crcr
 \noalign{\kern-\ex@\nointerlineskip}$\m@th\hfil#1#2\hfil$\crcr}}}%
\def\overleftarrow{\mathpalette\overleftarrow@}%
\def\overleftarrow@#1#2{\vbox{\ialign{##\crcr\leftarrowfill@#1\crcr
 \noalign{\kern-\ex@\nointerlineskip}$\m@th\hfil#1#2\hfil$\crcr}}}%
\def\overleftrightarrow{\mathpalette\overleftrightarrow@}%
\def\overleftrightarrow@#1#2{\vbox{\ialign{##\crcr
   \leftrightarrowfill@#1\crcr
 \noalign{\kern-\ex@\nointerlineskip}$\m@th\hfil#1#2\hfil$\crcr}}}%
\def\underrightarrow{\mathpalette\underrightarrow@}%
\def\underrightarrow@#1#2{\vtop{\ialign{##\crcr$\m@th\hfil#1#2\hfil
  $\crcr\noalign{\nointerlineskip}\rightarrowfill@#1\crcr}}}%
\def\underleftarrow{\mathpalette\underleftarrow@}%
\def\underleftarrow@#1#2{\vtop{\ialign{##\crcr$\m@th\hfil#1#2\hfil
  $\crcr\noalign{\nointerlineskip}\leftarrowfill@#1\crcr}}}%
\def\underleftrightarrow{\mathpalette\underleftrightarrow@}%
\def\underleftrightarrow@#1#2{\vtop{\ialign{##\crcr$\m@th
  \hfil#1#2\hfil$\crcr
 \noalign{\nointerlineskip}\leftrightarrowfill@#1\crcr}}}%
\def\qopnamewl@#1{\mathop{\operator@font#1}\nlimits@}
\let\nlimits@\displaylimits
\def\setboxz@h{\setbox\z@\hbox}
\def\varlim@#1#2{\mathop{\vtop{\ialign{##\crcr
 \hfil$#1\m@th\operator@font lim$\hfil\crcr
 \noalign{\nointerlineskip}#2#1\crcr
 \noalign{\nointerlineskip\kern-\ex@}\crcr}}}}
 \def\rightarrowfill@#1{\m@th\setboxz@h{$#1-$}\ht\z@\z@
  $#1\copy\z@\mkern-6mu\cleaders
  \hbox{$#1\mkern-2mu\box\z@\mkern-2mu$}\hfill
  \mkern-6mu\mathord\rightarrow$}
\def\leftarrowfill@#1{\m@th\setboxz@h{$#1-$}\ht\z@\z@
  $#1\mathord\leftarrow\mkern-6mu\cleaders
  \hbox{$#1\mkern-2mu\copy\z@\mkern-2mu$}\hfill
  \mkern-6mu\box\z@$}
\def\projlim{\qopnamewl@{proj\,lim}}
\def\injlim{\qopnamewl@{inj\,lim}}
\def\varinjlim{\mathpalette\varlim@\rightarrowfill@}
\def\varprojlim{\mathpalette\varlim@\leftarrowfill@}
\def\varliminf{\mathpalette\varliminf@{}}
\def\varliminf@#1{\mathop{\underline{\vrule\@depth.2\ex@\@width\z@
   \hbox{$#1\m@th\operator@font lim$}}}}
\def\varlimsup{\mathpalette\varlimsup@{}}
\def\varlimsup@#1{\mathop{\overline
  {\hbox{$#1\m@th\operator@font lim$}}}}
\def\align{\@verbatim \frenchspacing\@vobeyspaces \@alignverbatim
You are using the "align" environment in a style in which it is not defined.}
\let\csname endalign*\endcsname =\endtrivlist
\def\alignat{\@verbatim \frenchspacing\@vobeyspaces \@alignatverbatim
You are using the "alignat" environment in a style in which it is not defined.}
\let\csname endalignat*\endcsname =\endtrivlist
\def\xalignat{\@verbatim \frenchspacing\@vobeyspaces \@xalignatverbatim
You are using the "xalignat" environment in a style in which it is not defined.}
\let\csname endxalignat*\endcsname =\endtrivlist
\def\gather{\@verbatim \frenchspacing\@vobeyspaces \@gatherverbatim
You are using the "gather" environment in a style in which it is not defined.}
\let\csname endgather*\endcsname =\endtrivlist
\def\multiline{\@verbatim \frenchspacing\@vobeyspaces \@multilineverbatim
You are using the "multiline" environment in a style in which it is not defined.}
\let\csname endmultiline*\endcsname =\endtrivlist
\def\arrax{\@verbatim \frenchspacing\@vobeyspaces \@arraxverbatim
You are using a type of "array" construct that is only allowed in AmS-LaTeX.}
\def\tabulax{\@verbatim \frenchspacing\@vobeyspaces \@tabulaxverbatim
You are using a type of "tabular" construct that is only allowed in AmS-LaTeX.}
\let\csname endarrax*\endcsname =\endtrivlist
\let\csname endtabulax*\endcsname =\endtrivlist
\def\@@eqncr{\let\@tempa\relax
    \ifcase\@eqcnt \def\@tempa{& & &}\or \def\@tempa{& &}%
      \else \def\@tempa{&}\fi
     \@tempa
     \if@eqnsw
        \iftag@
           \@taggnum
        \else
           \@eqnnum\stepcounter{equation}%
        \fi
     \fi
     \global\tag@false
     \global\@eqnswtrue
     \global\@eqcnt\z@\cr}
 \def\endequation{%
     \ifmmode\ifinner 
      \iftag@
        \addtocounter{equation}{-1} 
        $\hfil
           \displaywidth\linewidth\@taggnum\egroup \endtrivlist
        \global\tag@false
        \global\@ignoretrue   
      \else
        $\hfil
           \displaywidth\linewidth\@eqnnum\egroup \endtrivlist
        \global\tag@false
        \global\@ignoretrue 
      \fi
     \else   
      \iftag@
        \addtocounter{equation}{-1} 
        \eqno \hbox{\@taggnum}
        \global\tag@false%
        $$\global\@ignoretrue
      \else
        \eqno \hbox{\@eqnnum}
        $$\global\@ignoretrue
      \fi
     \fi\fi
 } 
 \newif\iftag@ \tag@false
 \def\tag{\@ifnextchar*{\@tagstar}{\@tag}}
 \def\@tag#1{%
     \global\tag@true
     \global\def\@taggnum{(#1)}}
 \def\@tagstar*#1{%
     \global\tag@true
     \global\def\@taggnum{#1}%
}
\theoremstyle{definition}
\theoremstyle{remark}
\numberwithin{equation}{section}
\begin{document}
\title[Operators Induced by Graphs]{Operators Induced by Graphs}
\author{Ilwoo Cho and Palle E. T. Jorgensen}
\address{St. Ambrose Univ., Dept. of Math., 518 W. Locust St., Davenport, Iowa,
52803, U. S. A. / Univ. of Iowa, Dept. of Math., 14 McLean Hall, Iowa City,
Iowa, 52242, U. S. A. }
\email{chowoo@sau.edu / jorgen@math.uiowa.edu}
\thanks{The second named author is supported by the U. S. National Science
Foundation.}
\date{May., 2010}
\subjclass{05C62, 05C90, 17A50, 18B40, 47A99.}
\keywords{Directed Graphs, Graph Groupoids, Graph Operators}
\dedicatory{}
\thanks{}
\maketitle

\begin{abstract}
In this paper, we consider the spectral-theoretic properties of certain
operators induced by given graphs. Self-adjointness, unitary, hyponormality,
and normality of graph-depending operators are considered. As application,
we study the finitely supported operators in the free group factor $L(F_{N})$%
.
\end{abstract}

\strut

\section{Introduction}

Starting with analysis on countable directed graphs $G,$ we introduce
Hilbert spaces $H_{G}$ and a family of weighted operators $T$ on $H_{G}.$
When the \emph{weights} (called \emph{coefficients} later in the present
context) are chosen, $T$ is called a \emph{graph operator}. From its weights
(or coefficients), we define the support $Supp(T)$ of $T.$ In full
generality, it is difficult to identify analytic tools that reflect global
properties of the underlying graph. We will be interested in generic
properties that allow us to study spectral theory of this family of
operators $T.$ The spectral theorem will produce a spectral measure
representation for $T$ provided we can establish normality of $T$;
self-adjointness, unitary, etc. These are the classes of operators that
admit spectral analysis.

In Theorem 3.1, we give a necessary and sufficient condition on $Supp(T)$
for $T$ to be self-adjoint. Our analysis is of interest even in the case
when $G$ is finite. For instance, in Theorems 3.2 and 3.3, with $G$ assumed
finite, we show that there is a vertex-edge correspondence which
charactrerizes to weighted operators $T$ that are unitary. Also, in Section
4, hyponormality and normality of our graph operators are characterized.

\subsection{Overview}

A \emph{graph} is a set of objects called \emph{vertices} (or points or
nodes) connected by links called \emph{edges} (or lines). In a \emph{%
directed graph}, the two directions are counted as being distinct directed
edges (or arcs). A graph is depicted in a diagrammatic form as a set of dots
(for vertices), jointed by curves or line-segments (for edges). Similarly, a
directed graph is depicted in a diagrammatic form as a set of dots jointed
by arrowed curves, where the arrows point the direction of the directed
edges.

Recently, we studied the operator-algebraic structures induced by directed
graphs. A key idea in the study of graph-depending operator algebras is that
every directed graph $G$ induces its corresponding \emph{groupoid} $\Bbb{G},$
called the\emph{\ graph groupoid of} $G$. By considering the algebraic
structure of $\Bbb{G},$ we can determine the groupoid actions $\lambda ,$
acting on Hilbert spaces $H$: We can obtain suitable representations $(H,$ $%
\lambda )$ for $\Bbb{G}.$ And this guarantees the existence of operator
algebras $\mathcal{A}_{G}$ $=$ $\overline{\Bbb{C}[\lambda (\Bbb{G})]}^{w},$
generated by $\Bbb{G}$ (or induced by $G$), contained in the operator
algebras $B(H).$ Indeed, the operator algebras $\mathcal{A}_{G}$ are the
groupoid ($C^{*}$- or $W^{*}$-)subalgebras of $B(H).$

Note that each edge $e$ of $G$ assigns a partial isometry on $H,$ and every
vertex $v$ of $G$ assigns a projection on $H$ (under various different types
of representations of $\Bbb{G}$). We will fix a \emph{canonical
representation\ }$(H_{G},$ $L)$\emph{\ of} $\Bbb{G},$ and construct the
corresponding von Neumann algebra

\begin{center}
$M_{G}$ $=$ $\overline{\Bbb{C}[L(\Bbb{G})]}^{w}$ in $B(H_{G}),$
\end{center}

where $H_{G}$ is the \emph{graph Hilbert space} $l^{2}(\Bbb{G})$. This von
Neumann algebra $M_{G}$ is called \emph{the graph von Neumann algebra of} $%
G. $ (See Section 3.1 below).

In this paper, we are interested in certain elements $T$ of $M_{G}.$ Recall
that, by the definition of graph von Neumann algebras (which are groupoid
von Neumann algebras), if $T$ $\in $ $M_{G},$ then

\begin{center}
$T$ $=$ $\underset{w\in \Bbb{G}}{\sum }$ $t_{w}L_{w}$ with $t_{w}$ $\in $ $%
\Bbb{C}.$
\end{center}

Define the support $Supp(T)$ of $T$ by

\begin{center}
$Supp(T)$ $=$ $\{w$ $\in $ $\Bbb{G}$ $:$ $t_{w}$ $\neq $ $0\}.$
\end{center}

If the support $Supp(T)$ of $T$ is a ``finite'' subset of $\Bbb{G},$ then we
call $T$ a \emph{graph operator}. If $T$ is a graph operator, then the
quantities $t_{w}$, for $w$ $\in $ $Supp(T),$ are called the\emph{\
coefficients of} $T.$

As we see, all graph operators are (finite) linear sums of generating
operators $L_{w}$ of $M_{G},$ for $w$ $\in $ $\Bbb{G}.$ i.e., they are the
operators generated by finite numbers of projections and partial isometries
on $H_{G}.$ We are interested in the operator-theoretical properties of
them; in particular, self-adjointness, the unitary property, hyponormality,
and normality.

In operator theory, such properties are very important in order to
understand the given operators. For instance, if a given operator $T$ is
normal, then $T$ satisfies the conditions in the \emph{spectral mapping
theorem}, and hence the $C^{*}$-algebra generated by $T$ is $*$-isomorphic
to $C(spec(T)),$ the $C^{*}$-algebra consisting of all continuous functions
on the spectrum $spec(T)$ of $T.$

Recall that, for an operator $T,$ the spectrum of $T$, defined by

\begin{center}
$spec(T)$ $\overset{def}{=}$ $\{t$ $\in $ $\Bbb{C}$ $:$ $T$ $-$ $t1_{H}$ is
not invertible$\},$
\end{center}

is a nonempty compact subset of $\Bbb{C}.$

We characterize the self-adjointness, the unitary property, hyponormality,
and normality of graph operators in $M_{G}.$ We show that such
operator-theoretic properties of graph operators are characterized by the
combinatorial property of given graphs and certain analytic data of
coefficients of $T.$ This provides another connection between operator
theory, operator algebra, groupoid theory, and combinatorial graph theory.

\subsection{Motivation and Applications}

As application, we derive the operator-theoretic properties of finitely
supported elements of the free group factors $L(F_{N})$, for $N$ $\in $ $%
\Bbb{N}.$ Recall that the \emph{free group factor} $L(F_{N}),$ for $N$ $\in $
$\Bbb{N},$ is the group von Neumann algebra $\overline{\Bbb{C}[\lambda
(F_{N})]}^{w}$, in $B(l^{2}(F_{N})),$ generated by the free group $F_{N}$
with $N$-generators, where $(l^{2}(F_{N}),$ $\lambda )$ is the \emph{left
regular unitary representation of} $F_{N},$ consisting of the \emph{group
Hilbert space} $l^{2}(F_{N}),$ and the \emph{unitary representation} (which
is a group action) \emph{of} $F_{N}$\emph{\ acting on} $l^{2}(F_{N}).$ It is
possible since the free group factors $L(F_{N})$ are $*$-isomorphic to the
graph von Neumann algebras $M_{O_{N}}$ of the one-vertex-$N$-loop-edge
graphs $O_{N},$ for all $N$ $\in $ $\Bbb{N}$ $\cup $ $\{\infty \}$ (See
Section 5 below and [5]).\strut

Recall that a von Neumann algebra $\mathcal{M}$ in $B(H)$ is a \emph{factor}%
, if its $W^{*}$-subalgebra $\mathcal{M}^{\prime }$ $\cap $ $\mathcal{M}$ is 
$*$-isomorphic to $\Bbb{C}$ (or $\Bbb{C}$ $\cdot $ $1_{\mathcal{M}}$), where

\begin{center}
$\mathcal{M}^{\prime }$ $\overset{def}{=}$ $\{x$ $\in $ $B(H)$ $:$ $xm$ $=$ $%
mx,$ $\forall $ $m$ $\in $ $\mathcal{M}\}.$
\end{center}

It is well-known that a group $\Gamma $ is an i. c. c (or an infinite
conjugacy class) group, if and only if the corresponding group von Neumann
algebra $L(\Gamma )$ is a factor. Since every free group $F_{N}$ is i. c.
c., the group von Neumann algebra $L(F_{N})$ is a factor. So, we call $%
L(F_{N}),$ the free group factors.

The study of free group factors, itself, is very interesting and important
in operator algebra. We are interested in the operator-theoretic properties
of each element of the fixed free group factor.

We can check that the free group factors $L(F_{N})$ and the graph von
Neumann algebras $M_{O_{N}}$ of the one-vertex-$N$-loop-edge graphs $O_{N}$
are $*$-isomorphic. This provides a motivation for our application in
Section 5. More precisely, the analysis of finitely supported operators in $%
L(F_{N})$ is the study of graph operators in $M_{O_{N}},$ since there are
one-to-one correspondence between finitely supported operators in $L(F_{N}),$
and graph operators in $M_{O_{N}}.$

\section{Definitions and Background}

Starting with a graph $G,$ to understand the operator theory, we must
introduce a Hilbert space $H_{G}$ naturally coming from $G.$ Our approach is
as follows: From $G,$ introduce an enveloping groupoid $\Bbb{G}$ and an
associated involutive algebra $\mathcal{A}_{G}.$ We then introduce a
conditional expectation $E$ of $\mathcal{A}_{G}$ onto the subalgebra $%
\mathcal{D}_{G}$ of diagonal elements. To get a representation of $\mathcal{A%
}_{G}$ and an associated Hilbert space $H_{G},$ we then use the Stinespring
construction on $E$ (e.g., see [14]). \strut In this section, we introduce
the concepts and definitions we will use.\strut \strut 

\subsection{Graph Groupoids}

Let $G$ be a directed graph with its vertex set $V(G)$ and its edge set $%
E(G).$ Let $e$ $\in $ $E(G)$ be an edge connecting a vertex $v_{1}$ to a
vertex $v_{2}.$ Then we write $e$ $=$ $v_{1}$ $e$ $v_{2},$ for emphasizing
the initial vertex $v_{1}$ of $e$ and the terminal vertex $v_{2}$ of $e.$

For a fixed graph $G,$ we can define the oppositely directed graph $G^{-1},$
with $V(G^{-1})$ $=$ $V(G)$ and $E(G^{-1})$ $=$ $\{e^{-1}$ $:$ $e$ $\in $ $%
E(G)\},$ where each element $e^{-1}$ of $E(G^{-1})$ satisfies that

\begin{center}
$e$ $=$ $v_{1}$ $e$ $v_{2}$ in $E(G)$, with $v_{1},$ $v_{2}$ $\in $ $V(G),$
\end{center}

if and only if

\begin{center}
$e^{-1}$ $=$ $v_{2}$ $e^{-1}$ $v_{1},$ in $E(G^{-1}).$
\end{center}

This opposite directed edge $e^{-1}$ $\in $ $E(G^{-1})$ of $e$ $\in $ $E(G)$
is called the \emph{shadow of} $e.$ Also, this new graph $G^{-1}$, induced
by $G,$ is said to be the \emph{shadow of} $G.$ It is clear that $%
(G^{-1})^{-1}$ $=$ $G.$\strut

Define the \emph{shadowed graph} $\widehat{G}$ of $G$ by a directed graph
with its vertex set

\begin{center}
$V(\widehat{G})$ $=$ $V(G)$ $=$ $V(G^{-1})$
\end{center}

and its edge set

\begin{center}
$E(\widehat{G})$ $=$ $E(G)$ $\cup $ $E(G^{-1})$,
\end{center}

where $G^{-1}$ is the \emph{shadow} of $G$.

We say that two edges $e_{1}$ $=$ $v_{1}$ $e_{1}$ $v_{1}^{\prime }$ and $%
e_{2}$ $=$ $v_{2}$ $e_{2}$ $v_{2}^{\prime }$ are \emph{admissible}, if $%
v_{1}^{\prime }$ $=$ $v_{2},$ equivalently, the finite path $e_{1}$ $e_{2}$
is well-defined on $\widehat{G}.$ Similarly, if $w_{1}$ and $w_{2}$ are
finite paths on $G,$ then we say $w_{1}$ and $w_{2}$ are \emph{admissible},
if $w_{1}$ $w_{2}$ is a well-defined finite path on $G,$ too. Similar to the
edge case, if a finite path $w$ has its initial vertex $v$ and its terminal
vertex $v^{\prime },$ then we write $w$ $=$ $v_{1}$ $w$ $v_{2}.$ Notice that
every admissible finite path is a word in $E(\widehat{G}).$ Denote the set
of all finite path by $FP(\widehat{G}).$ Then $FP(\widehat{G})$ is the
subset of the set $E(\widehat{G})^{*},$ consisting of all finite words in $E(%
\widehat{G}).$

Suppose we take a part

\begin{center}
$
\begin{array}{lll}
& \bullet & \overset{e_{3}}{\longrightarrow }\cdot \cdot \cdot \\ 
& \uparrow & _{e_{2}} \\ 
\cdot \cdot \cdot \underset{e_{1}}{\longrightarrow } & \bullet & 
\end{array}
$
\end{center}

in a graph $G$ or in the shadowed graph $\widehat{G},$ where $e_{1},$ $%
e_{2}, $ $e_{3}$ are edges of $G,$ respectively of $\widehat{G}$. Then the
above admissibility shows that the edges $e_{1}$ and $e_{2}$ are admissible,
since we can obtain a finite path $e_{1}e_{2},$ however, the edges $e_{1}$
and $e_{3}$ are not admissible, since a finite path $e_{1}$ $e_{3}$ is
undefined.

We can construct the \emph{free semigroupoid} $\Bbb{F}^{+}(\widehat{G})$ of
the shadowed graph $\widehat{G},$ as the union of all vertices in $V(%
\widehat{G})$ $=$ $V(G)$ $=$ $V(G^{-1})$ and admissible words in $FP(%
\widehat{G}),$ equipped with its binary operation, the \emph{admissibility}$%
. $ Naturally, we assume that $\Bbb{F}^{+}\Bbb{(}\widehat{G})$ contains the 
\emph{empty word} $\emptyset ,$ as the representative of all undefined (or
non-admissible) finite words in $E(\widehat{G})$.

Remark that some free semigroupoid $\Bbb{F}^{+}\Bbb{(}\widehat{G})$ of $%
\widehat{G}$ does not contain the empty word; for instance, if a graph $G$
is a one-vertex-multi-edge graph, then the shadowed graph $\widehat{G}$ of $%
G $ is also a one-vertex-multi-edge graph too, and hence its free
semigroupoid $\Bbb{F}^{+}(\widehat{G})$ does not have the empty word.
However, in general, if $\left| V(G)\right| $ $>$ $1,$ then $\Bbb{F}^{+}(%
\widehat{G})$ always contain the empty word. Thus, if there is no confusion,
we always assume the empty word $\emptyset $ is contained in the free
semigroupoid $\Bbb{F}^{+}(\widehat{G})$ of $\widehat{G}.$

\begin{definition}
By defining the \emph{reduction }(RR) on $\Bbb{F}^{+}(\widehat{G}),$ we
define the graph groupoid $\Bbb{G}$ of a given graph $G,$ by the subset of $%
\Bbb{F}^{+}(\widehat{G}),$ consisting of all ``reduced'' finite paths on $%
\widehat{G},$ with the inherited admissibility on $\Bbb{F}^{+}(\widehat{G})$
under (RR), where the \emph{reduction} (RR) on $\Bbb{G}$ is\ \strut as
follows:

(RR)\qquad $\qquad \qquad \qquad w$ $w^{-1}$ $=$ $v$ and $w^{-1}w$ $=$ $%
v^{\prime },$\strut 

for all $w$ $=$ $v$ $w$ $v^{\prime }$ $\in $ $\Bbb{G},$ with $v,$ $v^{\prime
}$ $\in $ $V(\widehat{G}).$
\end{definition}

Such a graph groupoid $\Bbb{G}$ is indeed a categorial groupoid with its
base $V(\widehat{G})$ (See \textbf{Appendix A}).

\subsection{Canonical Representation of Graph Groupoids}

\strut Let $G$ be a given countable connected directed graph with its graph
groupoid $\Bbb{G}.$ Then we can define the (pure algebraic) algebra $%
\mathcal{A}_{G}$ of $\Bbb{G}$ by a vector space over $\Bbb{C},$ consisting
of all linear combinations of elements of $\Bbb{G},$ i.e.,

\begin{center}
$\mathcal{A}_{G}$ $\overset{def}{=}$ $\Bbb{C}$ $\cup $ $\left( \underset{k=1%
}{\overset{\infty }{\cup }}\left\{ \sum_{j=1}^{k}t_{j}w_{j}\left| 
\begin{array}{c}
w_{j}\in \Bbb{G},\text{ }t_{j}\in \Bbb{C}, \\ 
j=1,...,k
\end{array}
\right. \right\} \right) ,$
\end{center}

under the usual addition ($+$), and the multiplication ($\cdot $), dictated
by the admissibility on $\Bbb{G}.$ Define now a unary operation ($*$) on $%
\mathcal{A}_{G}$ by

\begin{center}
$\sum_{j=1}^{k}$ $t_{j}$ $w_{j}$ $\in $ $\mathcal{A}_{G}$ $\longmapsto $ $%
\sum_{j=1}^{k}$ $\overline{t_{j}}$ $w_{j}^{-1}$ $\in $ $\mathcal{A}_{G},$
\end{center}

where $\overline{z}$ means the conjugate of $z,$ for all $z$ $\in $ $\Bbb{C}%
, $ and of course $w^{-1}$ means the shadow of $w,$ for all $w$ $\in $ $\Bbb{%
G}.$ We call this unary operation ($*$), the \emph{adjoint} (or the \emph{%
shadow}) on $\mathcal{A}_{G}.$ Then the vector space $\mathcal{A}_{G},$
equipped with the adjoint ($*$), is a well-defined (algebraic) $*$-algebra.

Now, define a $*$-subalgebra $\mathcal{D}_{G}$ of $\mathcal{A}_{G}$ by

\begin{center}
$\mathcal{D}_{G}$ $\overset{def}{=}$ $\Bbb{C}$ $\cup $ $\left( \underset{k=1%
}{\overset{\infty }{\cup }}\left\{ \sum_{j=1}^{n}t_{j}\text{ }v_{j}\left| 
\begin{array}{c}
v_{j}\in V(\widehat{G}),\text{ }t_{j}\in \Bbb{C}, \\ 
j=1,...,k
\end{array}
\right. \right\} \right) .$
\end{center}

This $*$-algebra $\mathcal{D}_{G}$ acts like the diagonal of $\mathcal{A}%
_{G},$ so we call $\mathcal{D}_{G},$ the \emph{diagonal} ($*$-)\emph{%
subalgebra of} $\mathcal{A}_{G}.$

\subsubsection{The Hilbert Space $H_{G}$}

Below, we identify the canonical Hilbert space $H_{G}.$ The algebra $%
\mathcal{A}_{G}$ is represented by bounded linear operators acting on $H_{G}.
$ The representation is induced by the canonical conditional expectation,
via the Stinespring construction (e.g., see [14]).

We can construct a (algebraic $*$-)conditional expectation

\begin{center}
$E$ $:$ $\mathcal{A}_{G}$ $\rightarrow $ $\mathcal{D}_{G}$
\end{center}

by

(2.2.1)

\begin{center}
$E\left( \underset{w\in X}{\sum }\text{ }t_{w}w\right) $ $\overset{def}{=}$ $%
\underset{v\in X\cap V(\widehat{G})}{\sum }$ $t_{v}$ $v,$
\end{center}

for all $\underset{w\in X}{\sum }$ $t_{w}w$ $\in $ $\mathcal{A}_{G},$ where $%
X$ means a finite subset of $\Bbb{G}.$

Since the conditional expectation $F$ is completely positive under a
suitable topology on $\mathcal{A}_{G}$, we may apply the Stinespring's
construction. i.e., the diagonal subalgebra $\mathcal{D}_{G}$ is represented
as the $l^{2}$-space, $l^{2}(V(\widehat{G})),$ by the concatenation. Then we
can obtain the Hilbert space $H_{G},$

\begin{center}
$H_{G}$ $\overset{def}{=}$ the Stinespring space of $\mathcal{A}_{G}$ over $%
\mathcal{D}_{G},$ by $F,$
\end{center}

containing $l^{2}(V(\widehat{G})).$ i.e., if $\pi _{(E,\mathcal{D}_{G})}$ is
the Stinespring representation of $\mathcal{A}_{G},$ acting on $l^{2}(V(%
\widehat{G})),$ then

\begin{center}
$H_{G}$ $=$ $\pi _{(E,\mathcal{D}_{G})}\left( \mathcal{A}_{G}\right) .\strut 
$
\end{center}

This Stinespring space $H_{G}$ is the Hilbert space with its \emph{inner
product} $<,>_{G}$ satisfying that:

\begin{center}
$<h,$ $\pi _{(E,\mathcal{D}_{G})}(a)$ $k$ $>_{G}$ $=$ $<h,$ $E(a)$ $k$ $%
>_{2},$
\end{center}

for all $h,$ $k$ $\in $ $l^{2}(V(\widehat{G})),$ for all $a$ $\in $ $%
\mathcal{A}_{G},$ where $<,>_{2}$ is the inner product on $l^{2}(V(\widehat{G%
})).$\strut

i.e., The Stinespring space $H_{G}$ is the norm closure of $\mathcal{A}_{G},$
by the norm,

(2.2.2)

\begin{center}
$\left\| \sum_{j=1}^{n}\text{ }w_{i}\otimes h_{i}\right\| _{G}^{2}$ $=$ $%
\sum_{i=1}^{n}$ $\sum_{k=1}^{n}$ $<h_{i},$ $E(w_{i}^{*}w_{k})$ $h_{k}$ $%
>_{2},$
\end{center}

induced by the Stinespring inner product $<,>_{G}$ on $\mathcal{A}_{G},$ for
all $a_{i}$ $\in $ $\mathcal{A}_{G},$ $h_{i}$ $\in $ $l^{2}(V(\widehat{G})),$
for all $n$ $\in $ $\Bbb{N}.$

\begin{definition}
We call this Stinespring space $H_{G},$ the graph Hilbert space of $\Bbb{G}$
(or of $G$).
\end{definition}

\strut Denote the Hilbert space element $\pi _{(E,\mathcal{D}_{G})}(w)$ by $%
\xi _{w}$ in the graph Hilbert space $H_{G},$ for all $w$ $\in $ $\Bbb{G},$
with the identification,

\begin{center}
$\xi _{\emptyset }$ $=$ $0_{H_{G}},$ the zero vector in $H_{G},$
\end{center}

where $\emptyset $ is the empty word (if exists) of $\Bbb{G}.$ We can check
that the subset $\{\xi _{w}$ $:$ $w$ $\in $ $\Bbb{G}\}$ of $H_{G}$ satisfies
the following \emph{multiplication rule}:

\begin{center}
$\xi _{w_{1}}$ $\xi _{w_{2}}$ $=$ $\xi _{w_{1}w_{2}},$ on $H_{G},$
\end{center}

for all $w_{1},$ $w_{2}$ $\in $ $\Bbb{G}.$ Thus, we can define the \emph{%
canonical multiplication operators }$L_{w}$\emph{\ on} $H_{G}$, satisfying
that

\begin{center}
$L_{w}$ $\xi _{w^{\prime }}$ $\overset{def}{=}$ $\xi _{w}$ $\xi _{w^{\prime
}}$ $=$ $\xi _{ww^{\prime }},$
\end{center}

for all $w,$ $w^{\prime }$ $\in $ $\Bbb{G}.$ The existence of such
multiplication operators $L_{w}$'s guarantees the existence of a groupoid
action $L$ of $\Bbb{G},$ acting on $H_{G}$;

\begin{center}
$L$ $:$ $w$ $\in $ $\Bbb{G}$ $\longmapsto $ $L(w)$ $\overset{def}{=}$ $L_{w}$
$\in $ $B(H_{G}).$
\end{center}

This action $L$ of $\Bbb{G}$ is called the \emph{canonical groupoid action
of }$\Bbb{G}$\emph{\ on} $H_{G}.$

\subsubsection{\strut The Operators $L_{w}$}

Let $w$ and $w_{i}$ denote reduced finite paths in $FP_{r}(\widehat{G}),$
for $i$ $\in $ $\Bbb{N},$ equivalently, they are the reduced words in the
edge set $E(\widehat{G}),$ under the reduction (RR). Consider

(2.2.3)

\begin{center}
$L_{w}\left( \underset{i}{\sum }w_{i}\otimes h_{i}\right) $ $=$ $\underset{i%
}{\sum }$ $ww_{i}$ $\otimes $ $h_{i},$
\end{center}

for $h_{i}$ $\in $ $l^{2}(\Bbb{N})$. Here, the element $\underset{i}{\sum }$ 
$w_{i}$ $\otimes $ $h_{i}$ denotes a finite sum of tensors in $\mathcal{A}%
_{G}$. And $ww_{i}$ in (2.2.3) means concatenation of finite words. With the
conditional expectation $E$ $:$ $\mathcal{A}_{G}$ $\rightarrow $ $\mathcal{D}%
_{G}$ (See (2.2.1) above), we get the \emph{Stinespring representation} $%
(H_{G},$ $\pi _{(E,\mathcal{D}_{G})}),$ and the operators

$\pi _{(E,\mathcal{D}_{G})}(w)$ $:$ $\mathcal{H}_{G}$ $\rightarrow $ $%
\mathcal{H}_{G}$

obtained from (2.2.3) by passing to the quotient and completion as in
Definition 2.2. To simplify terminology, in the sequel, we will simply write 
$L_{w}$ for the operator $\pi _{(E,\mathcal{D}_{G})}(w).$

\subsubsection{\strut Graph von Neumann Algebras}

Let $G$, $\Bbb{G},$ and $H_{G}$ be given as above. And let $\{L_{w}$ $:$ $w$ 
$\in $ $\Bbb{G}\}$ the multiplication operators on $H_{G}$, where $L$ is the
canonical groupoid action of $\Bbb{G}.$

\begin{definition}
Let $G$ be a countable directed graph with its graph groupoid $\Bbb{G}.$ The
pair $(H_{G},$ $L)$ of the graph Hilbert space $H_{G}$ and the canonical
groupoid action $L$ of $\Bbb{G}$ is called the canonical representation of $%
\Bbb{G}$. The corresponding groupoid von Neumann algebra

\begin{center}
$M_{G}$ $\overset{def}{=}$ $\overline{\Bbb{C}[L(\Bbb{G})]}^{w},$
\end{center}

generated by $\Bbb{G}$ (equivalently, by $L(\Bbb{G})$ $=$ $\{L_{w}$ $:$ $w$ $%
\in $ $\Bbb{G}\}$), as a $W^{*}$-subalgebra of $B(H_{G})$, is called the
graph von Neumann algebra of $G.$
\end{definition}

\strut \strut We can check that the generating operators $L_{w}$'s of the
graph von Neumann algebra $M_{G}$ of $G$ satisfies that:

\begin{center}
$L_{w}^{*}$ $=$ $L_{w^{-1}},$ for all $w$ $\in $ $\Bbb{G},$
\end{center}

and

\begin{center}
$L_{w_{1}}L_{w_{2}}$ $=$ $L_{w_{1}w_{2}},$ for all $w_{1},$ $w_{2}$ $\in $ $%
\Bbb{G}.$\\[0pt]
\end{center}

It is easy to check that if $v$ is a vertex in $\Bbb{G},$ then the graph
operator $L_{v}$ is a projection, since

\begin{center}
$L_{v}^{*}$ $=$ $L_{v^{-1}}$ $=$ $L_{v}$ $=$ $L_{v^{2}}$ $=$ $L_{v}^{2}.$
\end{center}

Thus, by the reduction (RR) on $\Bbb{G},$ we can conclude that if $w$ is a
nonempty reduced finite path in $FP_{r}(\widehat{G}),$ then the operator $%
L_{w}$ is a partial isometry, since

\begin{center}
$L_{w}^{*}$ $L_{w}$ $=$ $L_{w^{-1}w}$,
\end{center}

and $w^{-1}w$ is a vertex, and hence $L_{w}^{*}L_{w}$ is a projection on $%
H_{G}.$\strut

\section{\strut Self-Adjointness and Unitary Property}

In this section, we introduce our main objects of this paper: canonical
representations of graph groupoids, graph von Neumann algebras, and graph
operators. And we study the self-adjointness of graph operators, and the
unitary property of them. We can realize that the self-adjointness and the
unitary property of graph operators are characterized by the combinatorial
property (admissibility) of given graphs, and certain analytic data of
coefficients of the operators.

Section 3.1 introduces the graph operators, and the theorem in Section 3.2
yields the structure of the graph operators that are self-adjoint; and
Section 3.3, the unitary case. The different geometries of $G$ and the
associated operators reflect different spectral representations. Section 4
below covers of normal and hyponormal graph operators. Finally, Section 5
takes up the case when the algebra is one of the free group factors (e.g.,
see [15], and [16]).

\subsection{Graph Operators}

Let $G$ be a graph with its graph groupoid $\Bbb{G},$ and let $M_{G}$ $=$ $%
\overline{\Bbb{C}[L(\Bbb{G})]}^{w}$ be the graph von Neumann algebra of $G$
in $B(H_{G}),$ where $(H_{G},$ $L)$ is the canonical representation of $\Bbb{%
G}.$ Since $M_{G}$ is a groupoid von Neumann algebra generated by $\Bbb{G},$
every element $T$ of $M_{G}$ satisfies the expansion,

\begin{center}
$T$ $=$ $\underset{w\in \Bbb{G}}{\sum }$ $t_{w}$ $L_{w},$ with $t_{w}$ $\in $
$\Bbb{C}.$
\end{center}

For the given operator $T$ $\in $ $M_{G},$ having the above expansion,
define the subset $Supp(T)$ of $\Bbb{G}$ by

\begin{center}
$Supp(T)$ $\overset{def}{=}$ $\{w$ $\in $ $\Bbb{G}$ $:$ $t_{w}$ $\neq $ $%
0\}. $
\end{center}

This subset $Supp(T)$ of $\Bbb{G}$ is called the \emph{support of} $T.$

\begin{definition}
Let $T$ be an element of the graph von Neumann algebra $M_{G}$ of a given
graph $G,$ and let $Supp(T)$ be the support of $T.$ If $Supp(T)$ is a finite
set, then we call $T$ a graph operator (on $H_{G}$).
\end{definition}

i.e., graph operators are the finitely supported operators on $H_{G}.$

In the rest of this section, we will consider a very specific example, but
very interesting, where a given graph $G$ is an infinite linear graph,

\begin{center}
$G$ $=$ \quad $\bullet \longrightarrow \bullet \longrightarrow \bullet
\longrightarrow \cdot \cdot \cdot .$
\end{center}

\strut \strut

We want to investigate the matrix forms of (which is unitarily equivalent
to) graph operators. Instead of determining the matrix forms of graph
operators, acting on the graph Hilbert space $H_{G}$, we consider the matrix
forms of them, acting on the subspace $l^{2}(V(\widehat{G})),$ embedded in
the graph Hilbert space $H_{G}.$

For convenience, we let

\begin{center}
$V(G)$ $=$ $\Bbb{N},$ and $E(G)$ $=$ $\{(j,$ $j$ $+$ $1)$ $:$ $j$ $\in $ $%
\Bbb{N}\},$
\end{center}

i.e.,

\begin{center}
$G$ $=$ \quad $\underset{1}{\bullet }\overset{(1,2)}{\longrightarrow }%
\underset{2}{\bullet }\overset{(2,3)}{\longrightarrow }\underset{3}{\bullet }%
\overset{(3,4)}{\longrightarrow }$ $\cdot \cdot \cdot .$
\end{center}

\strut

Then, we can check that

\begin{center}
$l^{2}\left( V(\widehat{G})\right) $ $\overset{\text{Hilbert}}{=}$ $l^{2}(%
\Bbb{N})$ in $H_{G}.$
\end{center}

So, we can assign the graph operator $L_{j}$ to the infinite matrix

\begin{center}
$
\begin{array}{ll}
\qquad \qquad \qquad j\text{-th} &  \\ 
\left( 
\begin{array}{lllllll}
0 &  &  &  &  &  & 0 \\ 
& \ddots &  &  &  &  &  \\ 
&  & 0 &  &  &  &  \\ 
&  &  & 1 &  &  &  \\ 
&  &  &  & 0 &  &  \\ 
&  &  &  &  & \ddots &  \\ 
0 &  &  &  &  &  & 
\end{array}
\right) & j\text{-th,}
\end{array}
$
\end{center}

on $l^{2}(\Bbb{N}),$ for all $j$ $\in $ $\Bbb{N}$ $=$ $V(\widehat{G}),$ and
we assign the graph operator $L_{(j,\text{ }j+1)}$ to the infinite matrix

\begin{center}
\strut $
\begin{array}{ll}
\qquad \qquad \qquad j\text{-th} &  \\ 
\left( 
\begin{array}{lllllll}
0 &  &  &  &  &  & 0 \\ 
0 & \ddots &  &  &  &  &  \\ 
& \ddots & 0 &  &  &  &  \\ 
&  & 0 & 1 & 1 &  &  \\ 
&  &  & 0 & 0 &  &  \\ 
&  &  &  & 0 & \ddots &  \\ 
0 &  &  &  &  & \ddots & 
\end{array}
\right) & j\text{-th,}
\end{array}
$
\end{center}

on $l^{2}(\Bbb{N}),$ for all $j$ $\in $ $\Bbb{N}.$ More precisely, we can
assign

\begin{center}
$L_{j}$ $\in $ $M_{G}$ $\longleftrightarrow $ $\mid j><j\mid $ $\in $ $%
B\left( l^{2}(\Bbb{N})\right) $
\end{center}

and

\begin{center}
$L_{(j,j+1)}$ $\in $ $M_{G}\longleftrightarrow $ $\mid j><j\mid +\mid
j><j+1\mid $ $\in $ $B\left( l^{2}(\Bbb{N})\right) ,$
\end{center}

where $\mid j>$ means the Dirac operators, for all $j$ $\in $ $\Bbb{N}.$

We use Dirac's notation for rank-one operators, i.e.,

\begin{center}
$\mid u><v\mid x$ $=$ $<v,$ $x>$ $u,$
\end{center}

defined for vectors $u,$ $v,$ $x$ in a fixed Hilbert space having its inner
product $<,>.$

So, for a reduced finite path $w$ $=$ $e_{i_{1}}$ $e_{i_{2}}$ ... $e_{i_{k}}$
$\in $ $\Bbb{G},$ with $e_{i_{j}}$ $=$ $(i_{j},$ $i_{j}$ $+$ $1)$ $\in $ $E(%
\widehat{G}),$ where

\begin{center}
$i_{j+1}$ $=$ $i_{j}$ $+$ $1,$ for $j$ $=$ $1,$ ..., $k$ $-$ $1,$
\end{center}

the graph operator $L_{w}$ is determined as a matrix,

\begin{center}
$A_{e_{i_{1}}}$ $+$ $A_{e_{i_{2}}}$ $+$ ... $+$ $A_{e_{i_{k}}},$
\end{center}

where $A_{e_{i_{j}}}$ are the infinite matrices (on $l^{2}(\Bbb{N})$) of the
graph operators $L_{e_{i_{j}}}.$

\strut For instance, the self-adjoint operator

\begin{center}
$
\begin{array}{ll}
L_{(j,j+1)}+L_{(j,j+1)}^{*} & =L_{(j,j+1)}+L_{(j+1,j)} \\ 
& =2\mid j><j\mid +\mid j><j+1\mid \\ 
& \qquad \qquad +\mid j+1><j\mid
\end{array}
$
\end{center}

has its matrix form

\begin{center}
$\left( 
\begin{array}{lllllll}
0 & 0 &  &  &  &  &  \\ 
0 & \ddots & \ddots &  &  &  &  \\ 
& \ddots & 0 & 0 &  &  &  \\ 
&  & 0 & 2 & 1 &  &  \\ 
&  &  & 1 & 0 & 0 &  \\ 
&  &  &  & 0 & \ddots & \ddots \\ 
&  &  &  &  & \ddots & 
\end{array}
\right) ,$
\end{center}

\strut on $l^{2}(\Bbb{N})$ $=$ $l^{2}\left( V(\widehat{G})\right) .$ So,
more generally, the self-adjoint operator $L_{w}$ $+$ $L_{w}^{*}$, for $w$ $%
\in $ $\Bbb{G},$ becomes a certain self-adjoint Toeplitz operator on $l^{2}(%
\Bbb{N}),$ because $l^{2}(\Bbb{N})$ is Hilbert-space isomorphic to the Hardy
space $H^{2}(\Bbb{T}),$ equipped with the Haar measure, where $\Bbb{T}$ is
the unit circle in $\Bbb{C}$ (e.g., see [3], [12], and [13]).

\subsection{\strut Self-Adjoint Graph Operators}

The operator-theoretic properties of (bounded linear) operators; the
self-adjointness, the unitary properties, the hyponormality, and the
normality are briefly introduced in \textbf{Appendix B}.

In this section, we will consider the self-adjointness of graph operators.
Let $G$ be a graph with its graph groupoid $\Bbb{G},$ and let $M_{G}$ be the
graph von Neumann algebra of $G.$ Take a graph operator $T$ in $M_{G},$

\begin{center}
$T$ $=$ $\underset{w\in Supp(T)}{\sum }$ $t_{w}$ $L_{w},$ with $t_{w}$ $\in $
$\Bbb{C}.$
\end{center}

The following theorem characterize the self-adjointness of $T$.

\begin{theorem}
Let $T$ $\in $ $M_{G}$ be a given graph operator. Then $T$ is self-adjoint,
if and only if there exists ``a'' subset $X$ of $Supp(T)$ such that

\begin{center}
$Supp(T)$ $\cap $ $FP_{r}(\widehat{G})$ $=$ $X$ $\sqcup $ $X^{-1},$
\end{center}

where $\sqcup $ means the disjoint union, and

\begin{center}
$t_{x}$ $=$ $\overline{t_{x^{-1}}},$ for all $x$ $\in $ $X,$
\end{center}

where $X^{-1}$ $\overset{def}{=}$ $\{x^{-1}$ $:$ $x$ $\in $ $X\},$ and $%
\overline{z}$ means the conjugate of $z,$ for all $z$ $\in $ $\Bbb{C},$ and

\begin{center}
$t_{v}$ $\in $ $\Bbb{R},$ for all $v$ $\in $ $Supp(T)$ $\cap $ $V(\widehat{G}%
).$
\end{center}
\end{theorem}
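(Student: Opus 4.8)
The plan is to reduce the operator equation $T=T^{*}$ to a system of scalar equations on the coefficients $t_{w}$, using the two structural identities $L_{w}^{*}=L_{w^{-1}}$ and $L_{w_{1}}L_{w_{2}}=L_{w_{1}w_{2}}$ recorded above. First I would compute the adjoint termwise. Writing $T=\underset{w\in \Bbb{G}}{\sum }t_{w}L_{w}$, linearity and $L_{w}^{*}=L_{w^{-1}}$ give $T^{*}=\underset{w\in \Bbb{G}}{\sum }\overline{t_{w}}\,L_{w^{-1}}$. Reindexing by the involution $u=w^{-1}$ (so $w=u^{-1}$, using $(u^{-1})^{-1}=u$) turns this into $T^{*}=\underset{u\in \Bbb{G}}{\sum }\overline{t_{u^{-1}}}\,L_{u}$, so the $L_{u}$-coefficient of $T^{*}$ is $\overline{t_{u^{-1}}}$.

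The crucial step is to pass from the operator identity to a coefficientwise one, which requires the uniqueness of the expansion, i.e.\ the linear independence of $\{L_{w}:w\in \Bbb{G}\}$. I would establish this by evaluating $\underset{w}{\sum }c_{w}L_{w}$ on a vertex-vector $\xi _{v}$: since $L_{w}\xi _{v}=\xi _{wv}$, which equals $\xi _{w}$ when $v=w^{-1}w$ (the terminal vertex of $w$) and vanishes otherwise, one gets $\underset{w:\,w^{-1}w=v}{\sum }c_{w}\xi _{w}=0$; the family $\{\xi _{w}\}$ is pairwise orthogonal with respect to $\langle \cdot ,\cdot \rangle _{G}$ (from the defining norm (2.2.2), because $E(w_{1}^{-1}w_{2})$ lands in $V(\widehat{G})$ only when $w_{1}=w_{2}$), forcing all $c_{w}=0$ as $v$ ranges over the vertices. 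Granting this, $T=T^{*}$ holds if and only if $t_{w}=\overline{t_{w^{-1}}}$ for every $w\in \Bbb{G}$. I would then split this single condition along $\Bbb{G}=V(\widehat{G})\sqcup FP_{r}(\widehat{G})$ (the empty word contributes nothing, as $L_{\emptyset }=0$). For a vertex $v$ one has $v^{-1}=v$, so the condition is $t_{v}=\overline{t_{v}}$, i.e.\ $t_{v}\in \Bbb{R}$, yielding the vertex clause on $Supp(T)\cap V(\widehat{G})$. For a nonempty reduced path the map $x\mapsto x^{-1}$ is a fixed-point-free involution of $FP_{r}(\widehat{G})$ (shadowing reverses orientation, so $x\neq x^{-1}$); the relation $t_{x}=\overline{t_{x^{-1}}}$ forces $x\in Supp(T)\Rightarrow x^{-1}\in Supp(T)$, so $Supp(T)\cap FP_{r}(\widehat{G})$ is invariant under the involution, and picking one representative from each orbit $\{x,x^{-1}\}$ produces a set $X$ with $Supp(T)\cap FP_{r}(\widehat{G})=X\sqcup X^{-1}$ and $t_{x}=\overline{t_{x^{-1}}}$. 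The converse is just the reverse reading: given such $X$ and the stated coefficient relations, the termwise identity $t_{w}=\overline{t_{w^{-1}}}$ holds on all of $\Bbb{G}$, hence $T=T^{*}$.

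The main obstacle will be the linear-independence (uniqueness-of-expansion) step, since this is precisely what licenses passing from the operator equation $T=T^{*}$ to the scalar equations $t_{w}=\overline{t_{w^{-1}}}$; here the explicit action $L_{w}\xi _{w^{-1}w}=\xi _{w}$ and the orthogonality of the $\xi _{w}$ must be used carefully. A secondary point needing attention is confirming that $x\neq x^{-1}$ for every nonempty reduced path, so that the involution on $FP_{r}(\widehat{G})$ is genuinely fixed-point-free and the decomposition $X\sqcup X^{-1}$ is a legitimate disjoint union.
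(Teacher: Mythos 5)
Your proposal is correct, and its skeleton --- compute $T^{*}$ termwise from $L_{w}^{*}=L_{w^{-1}}$, compare coefficients, then split along $\Bbb{G}=V(\widehat{G})\sqcup FP_{r}(\widehat{G})$ --- is the same as the paper's. The genuine difference lies in how the passage from the operator identity $T=T^{*}$ to the scalar identities $t_{w}=\overline{t_{w^{-1}}}$ is licensed. The paper never addresses this: its forward direction argues by contradiction, asserting that if $t_{x_{0}}\neq \overline{t_{x_{0}^{-1}}}$ then the summand $(t_{x_{0}}L_{x_{0}})^{*}=\overline{t_{x_{0}}}L_{x_{0}^{-1}}$ differs from $t_{x_{0}^{-1}}L_{x_{0}^{-1}}$ and ``hence $T^{*}\neq T$,'' which tacitly presupposes exactly the uniqueness of expansion that you prove. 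Your linear-independence lemma (evaluate $\sum_{w}c_{w}L_{w}$ on vertex vectors $\xi _{v}$, use $L_{w}\xi _{w^{-1}w}=\xi _{w}$ together with pairwise orthogonality of the $\xi _{w}$'s, which follows since $E(w_{1}^{-1}w_{2})$ is supported on $V(\widehat{G})$ precisely when $w_{1}=w_{2}$) is sound and fills this gap; and since graph operators have finite support, finite linear independence is all that is needed, so no convergence issues arise. Your organization is also cleaner: deriving $t_{w}=\overline{t_{w^{-1}}}$ for all $w$ at once and then reading off both clauses of the theorem via the fixed-point-free involution on $FP_{r}(\widehat{G})$ replaces the paper's case-by-case contradiction arguments, and it makes transparent why the set $X$ exists (choose one representative per orbit $\{x,x^{-1}\}$). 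One small point to tighten: your justification that $x\neq x^{-1}$ for a nonempty reduced path (``shadowing reverses orientation'') is too quick when $x$ is a loop, where both $x$ and $x^{-1}$ have the same initial and terminal vertex; the correct argument is that $x=x^{-1}$ as reduced words would force either a middle letter equal to its own shadow (odd length, impossible since $E(G)$ and $E(G^{-1})$ are disjoint) or an adjacent cancelling pair (even length, contradicting reducedness).
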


\begin{proof}
($\Leftarrow $) Assume that $T$ $=$ $\underset{w\in Supp(T)}{\sum }$ $t_{w}$ 
$L_{w}$ is a graph operator in $M_{G},$ and suppose there exists a subset $X$
of

\begin{center}
$Supp_{V}^{c}(\widehat{G})$ $\overset{denote}{=}$ $Supp(T)$ $\cap $ $FP_{r}(%
\widehat{G})$
\end{center}

such that

\begin{center}
$Supp_{V}^{c}(T)$ $=$ $X$ $\sqcup $ $X^{-1},$
\end{center}

and

\begin{center}
$t_{x}$ $=$ $\overline{t_{x^{-1}}},$ for all $x$ $\in $ $X.$
\end{center}

Also, assume that $t_{v}$ $\in $ $\Bbb{R},$ for all elements $v$ in

\begin{center}
$Supp_{V}(T)$ $\overset{denote}{=}$ $Supp(T)$ $\cap $ $V(\widehat{G}).$
\end{center}

Then the operator $T$ can be re-written by

\begin{center}
$T$ $=$ $\underset{v\in Supp_{V}(T)}{\sum }$ $t_{v}L_{v}$ $+$ $\underset{%
x\in X}{\sum }$ $t_{x}$ $L_{x}$ $+$ $\underset{x^{-1}\in X^{-1}}{\sum }$ $%
t_{x^{-1}}$ $L_{x^{-1}}.$
\end{center}

Moreover, we can have that

$\qquad T^{*}$ $=$ $\left( \underset{v\in Supp_{V}(T)}{\sum }t_{v}L_{v}+%
\underset{x\in X}{\sum }\text{ }t_{x}L_{x}+\underset{x^{-1}\in X^{-1}}{\sum }%
t_{x^{-1}}L_{x^{-1}}\right) ^{*}$

$\qquad \qquad =$ $\underset{v\in Supp_{V}(T)}{\sum }$ $\overline{t_{v}}$ $%
L_{v^{-1}}$ $+$ $\underset{x\in X}{\sum }$ $\overline{t_{x}}$ $L_{x^{-1}}$ $%
+ $ $\underset{x^{-1}\in X^{-1}}{\sum }$ $\overline{t_{x^{-1}}}$ $L_{x}$

$\qquad \qquad =$ $\underset{v\in Supp_{V}(T)}{\sum }$ $t_{v}L_{v}$ $+$ $+$ $%
\underset{x\in X}{\sum }$ $\overline{t_{x}}$ $L_{x^{-1}}$ $+$ $\underset{%
x^{-1}\in X^{-1}}{\sum }$ $\overline{t_{x^{-1}}}$ $L_{x}$

since $t_{v}$ $\in $ $\Bbb{R},$ and $L_{v}$ are projections for all $v$ $\in 
$ $V(\widehat{G})$

$\qquad \qquad =$ $\underset{v\in Supp_{V}(T)}{\sum }$ $t_{v}L_{v}$ $+$ $%
\underset{x\in X}{\sum }$ $t_{x^{-1}}$ $L_{x^{-1}}$ $+$ $\underset{x^{-1}\in
X^{-1}}{\sum }$ $t_{x}L_{x}$

since $t_{x}$ $=$ $\overline{t_{x^{-1}}},$ for all $x$ $\in $ $X$

$\qquad \qquad =$ $\underset{v\in Supp_{V}(T)}{\sum }$ $t_{v}L_{v}$ $+$ $%
\underset{x^{-1}\in X^{-1}}{\sum }$ $t_{x^{-1}}L_{x^{-1}}$ $+$ $\underset{%
x\in X}{\sum }$ $t_{x}$ $L_{x}$

since $Supp(T)$ $=$ $X$ $\sqcup $ $X^{-1}$

$\qquad \qquad =$ $T.$

Therefore, under hypothesis, the adjoint $T^{*}$ of $T$ is identical to $T,$
itself, and hence the element $T$ of $M_{G}$ is self-adjoint.

($\Rightarrow $) Let $T$ $\in $ $M_{G}$ be a self-adjoint graph operator,
i.e., $T$ satisfies $T^{*}$ $=$ $T.$ Then

\begin{center}
$
\begin{array}{ll}
T^{*} & =\left( \underset{w\in Supp(T)}{\sum }\text{ }t_{w}L_{w}\right) ^{*}
\\ 
& =\underset{w\in Supp(T)}{\sum }\overline{t_{w}}L_{w^{-1}}\overset{(\star )%
}{=}\underset{w\in Supp(T)}{\sum }t_{w}L_{w} \\ 
& =T.
\end{array}
$
\end{center}

To satisfy the above equality ($\star $), we must have

\begin{center}
$Supp(T^{*})$ $=$ $Supp(T).$
\end{center}

Notice that the support $Supp(T^{*})$ of the adjoint $T^{*}$ of $T$ satisfies

\begin{center}
$Supp(T^{*})$ $=$ $Supp(T)^{-1},$ in $\Bbb{G}$
\end{center}

So, the self-adjointness of $T$ guarantees

\begin{center}
$Supp(T)$ $=$ $Supp(T)^{-1}$ in $\Bbb{G}.$
\end{center}

Therefore, since $Supp(T)$ is self-adjoint, in the sense that $Supp(T)$ is
identical to $Supp(T)^{-1},$ there must exists a subset $X$ of $%
Supp_{V}^{c}(T)$ such that

\begin{center}
$Supp_{V}^{c}(T)$ $=$ $X$ $\sqcup $ $X^{-1},$
\end{center}

because the following set equality always holds true;

\begin{center}
$Supp_{V}(T)^{-1}$ $=$ $Supp_{V}(T)$
\end{center}

(since $V(\widehat{G})^{-1}$ $=$ $V(\widehat{G})$ $=$ $V(G)$ $=$ $V(G^{-1})$%
).

Now, let $X$ be a subset satisfying the above set equality,

\begin{center}
$Supp_{V}^{c}(T)$ $=$ $X$ $\sqcup $ $X^{-1},$ in $\Bbb{G}.$
\end{center}

For a fixed element $x$ $\in $ $X,$ the coefficient $t_{x}$ of $T$ has its
corresponding coefficient $t_{x^{-1}}$ of $T.$ Assume now that there exists
at least one element $x_{0}$ $\in $ $X,$ such that

\begin{center}
$t_{x_{0}}$ $\neq $ $\overline{t_{x_{0}^{-1}}}$ in $\Bbb{C}.$
\end{center}

Then the summand $t_{x_{0}}L_{x_{0}}$ of $T$ satisfies that

\begin{center}
$(t_{x_{0}}L_{x_{0}})^{*}$ $=$ $\overline{t_{x_{0}}}$ $L_{x_{0}^{-1}}$ $\neq 
$ $t_{x_{0}^{-1}}$ $L_{x_{0}^{-1}},$
\end{center}

and hence $T^{*}$ $\neq $ $T$ on $H_{G}.$ This contradicts our
self-adjointness of $T.$

Therefore, if $T$ is self-adjoint, then there exists a unique subset $X$ of
the support $Supp(T)$ of $T$ such that

\begin{center}
$Supp_{V}^{c}(T)$ $=$ $X$ $\sqcup $ $X^{-1},$
\end{center}

and

\begin{center}
$t_{x}$ $=$ $\overline{t_{x^{-1}}},$ for all $x$ $\in $ $X.$
\end{center}

Similarly, assume that there exists at least one $v_{0}$ $\in $ $%
Supp_{V}(T), $ such that $t_{v_{0}}$ $\in $ $\Bbb{C}$ $\setminus $ $\Bbb{R}.$
Then the summand $t_{v_{0}}$ $L_{v_{0}}$ of $T$ satisfies that

\begin{center}
$(t_{v_{0}}L_{v_{0}})^{*}$ $=$ $\overline{t_{v_{0}}}$ $L_{v_{0}^{-1}}$ $=$ $%
\overline{t_{v_{0}}}$ $L_{v_{0}}$ $\neq $ $t_{v_{0}}$ $L_{v_{0}},$
\end{center}

since $\overline{t_{v_{0}}}$ $\neq $ $t_{v_{0}},$ whenever $t_{v_{0}}$ $%
\notin $ $\Bbb{R}$ in $\Bbb{C}.$ This also contradicts our assumption that $%
T $ is self-adjoint.
\end{proof}

\strut The above theorem characterizes the self-adjointness of graph
operators $T$ by the classification of the support $Supp(T),$ and the
coefficients of $T.$ This is interesting since the self-adjointness of graph
operators are determined by the combinatorial data represented by the
elements of the supports (or the admissibility of graph groupoids of given
graphs), and the simple analytic data of coefficients.

\begin{example}
Let $G$ be a graph,

\strut

\begin{center}
$G$ $=$ \quad $_{v_{1}}\bullet \overset{e_{1}}{\underset{e_{2}}{%
\rightrightarrows }}\underset{v_{2}}{\bullet }\overset{e_{3}}{\leftarrow }%
\bullet _{v_{3}}.$
\end{center}

\strut Let

\begin{center}
$T_{1}$ $=$ $t_{v_{1}}$ $L_{v_{1}}$ $+$ $t_{e_{1}}L_{e_{1}}$ $+$ $%
t_{e_{1}^{-1}}L_{e_{1}^{-1}}$ $+$ $%
t_{e_{3}e_{2}^{-1}}L_{e_{3}e_{2}^{-1}}+t_{e_{2}e_{3}^{-1}}L_{e_{2}e_{3}^{-1}}, 
$
\end{center}

and

\begin{center}
$T_{2}$ $=$ $t_{e_{2}}L_{e_{2}}$ $+$ $t_{e_{3}}L_{e_{3}}$ $+$ $%
t_{e_{3}^{-1}}L_{e_{3}^{-1}},$
\end{center}

in $M_{G}.$ Then we can check the self-adjointness of $T_{1}$ and $T_{2}$
immediately by the above theorem. First, consider the self-adjointness of $%
T_{1}.$ We can see that

\begin{center}
$Supp_{V}(T_{1})$ $=$ $\{v_{1}\},$ and $Supp_{V}^{c}(T_{1})$ $=$ $\{e_{1},$ $%
e_{1}^{-1},$ $e_{3}e_{2}^{-1},$ $e_{2}e_{3}^{-1}\},$
\end{center}

in $Supp(T_{1}).$ So, there exists a subset $X$ of $Supp(T_{1}),$

\begin{center}
$X$ $=$ $\{e_{1},$ $e_{3}e_{2}^{-1}\},$ having $X^{-1}$ $=$ $\{e_{1}^{-1},$ $%
e_{2}e_{3}^{-1}\},$
\end{center}

satisfying that

\begin{center}
$Supp_{V}^{c}(T_{1})$ $=$ $X$ $\sqcup $ $X^{-1}.$
\end{center}

(From this example, we can realize that the existence of $X$ is not uniquely
determined. For instance, we may take a set $Y,$

\begin{center}
$Y$ $=$ $\{e_{1}^{-1},$ $e_{3}e_{2}^{-1}\},$ having $Y^{-1}$ $=$ $\{e_{1},$ $%
e_{2}e_{3}^{-1}\},$
\end{center}

satisfying $Supp_{V}^{c}(T_{1})$ $=$ $Y$ $\sqcup $ $Y^{-1}.$)

So, the graph operator $T_{1}$ is self-adjoint on $H_{G},$ if and only if

\begin{center}
$t_{v_{1}}$ $\in $ $\Bbb{R},$
\end{center}

and

\begin{center}
$t_{e_{1}}$ $=$ $\overline{t_{e_{1}^{-1}}},$ and $t_{e_{3}e_{2}^{-1}}$ $=$ $%
\overline{t_{e_{2}e_{3}^{-1}}},$ in $\Bbb{C}.$
\end{center}

Also, for an operator $T_{2},$ we can immediately check that $T_{2}$ never
be self-adjoint on $H_{G},$ because

\begin{center}
$Supp(T_{2})$ $=$ $Supp_{V}^{c}(T_{2})$ $=$ $\{e_{2},$ $e_{3},$ $%
e_{3}^{-1}\},$
\end{center}

and there does not exist a subset $X,$ satisfying

\begin{center}
$Supp_{V}^{c}(T_{2})$ $=$ $X$ $\sqcup $ $X^{-1}.$
\end{center}

Therefore, a graph operator $T_{2}$ is not self-adjoint on $H_{G}.$\strut
\end{example}

\subsection{\strut Unitary Graph Operators}

\strut In this section, we will consider the unitary graph operators in the
given graph von Neumann algebra $M_{G}$ of a connected directed graph $G.$
To consider the unitary property of graph operators, we will restrict our
interests to the case where a given connected graph $G$ is a finite graph.
Recall that a graph $G$ is \emph{finite}, if

\begin{center}
$\left| V(G)\right| $ $<$ $\infty ,$ and $\left| E(G)\right| $ $<$ $\infty .$
\end{center}

\strut

\textbf{Assumption} In this section, we assume all given graphs are
``finite.'' $\square $

\strut

The reason we only consider finite graphs to study the unitary property of
graph operators is that: we want to determine the identity operator $i_{d}$
on the graph Hilbert space $H_{G},$ easily.

Notice that the identity operator $i_{d}$ in $B(H_{G})$ is identified with
the element

\begin{center}
$1_{M_{G}}$ $=$ $\underset{v\in V(\widehat{G})}{\sum }$ $L_{v}$ in $M_{G}.$
\end{center}

\begin{remark}
\strut Remark that, even though the given graph $K$ is ``infinite,'' in
particular, $\left| V(K)\right| $ $=$ $\infty ,$ the identity element $%
1_{M_{K}}$ of the corresponding graph von Neumann algebra $M_{K}$ is the
operator $\underset{v\in V(\widehat{K})}{\sum }$ $L_{v},$ under topology.
So, the identity element $1_{M_{K}}$ is not finitely supported. Therefore,
we can verify that a finitely supported element $T$ of $M_{K}$ (which is our
graph operator) would not be unitary, since the Cartesian product

\begin{center}
$Supp(T)^{r_{1}}\times ...\times Supp(T)^{r_{n}},$
\end{center}

where

\begin{center}
$(r_{1},$ ..., $r_{n})$ $\in $ $\{\pm 1\}^{n}$,
\end{center}

is a finite set, for all $n$ $\in $ $\Bbb{N}.$ Thus, we restrict our
interests to the case where we have ``finite'' graphs.
\end{remark}

Let $1_{M_{G}}$ be the identity element of the graph von Neumann algebra $%
M_{G}$ of a finite graph $G.$ Then, an operator $U$ on $H_{G}$ is unitary,
if and only if

\begin{center}
$U^{*}U$ $=$ $1_{M_{G}}$ $=$ $UU^{*},$
\end{center}

by definition, and hence, equivalently, $U^{*}$ $=$ $U^{-1},$ where $U^{-1}$
means the \emph{inverse of} $U.$

Now, let's fix a graph operator

\begin{center}
$T$ $=$ $\underset{w\in Supp(T)}{\sum }$ $t_{w}$ $L_{w}$ in $M_{G}.$
\end{center}

\strut Then the adjoint $T^{*}$ of $T$ is

\begin{center}
$T^{*}$ $=$ $\underset{w\in Supp(T)}{\sum }$ $\overline{t_{w}}$ $L_{w^{-1}}$
in $M_{G}.$
\end{center}

Thus the products $T^{*}T$ of $TT^{*}$ are

\begin{center}
$T^{*}T$ $=$ $\underset{(w_{1},w_{2})\in Supp(T)^{2}}{\sum }$ $\overline{%
t_{w_{1}}}$ $t_{w_{2}}$ $L_{w_{1}^{-1}w_{2}}$
\end{center}

and

\begin{center}
$TT^{*}$ $=$ $\underset{(y_{1},y_{2})\in Supp(T)^{2}}{\sum }$ $t_{y_{1}}%
\overline{t_{y_{2}}}$ $L_{y_{1}y_{2}^{-1}},$
\end{center}

respectively, where

\begin{center}
$Supp(T)^{2}$ $\overset{def}{=}$ $Supp(T)$ $\times $ $Supp(T).$
\end{center}

\begin{definition}
Let $X$ be a subset of the graph groupoid $\Bbb{G}$ of $G.$ We say that this
subset $X$ is \emph{alternatively disconnected}, if it satisfies that:

(i)\ \ \ $\left| X\right| $ $\geq $ $2,$

(ii)\ \ for any pair $(w_{1},$ $w_{2})$ of ``distinct'' elements $w_{1}$ and 
$w_{2}$ of

\begin{center}
$X$ $\cap $ $FP_{r}(\widehat{G})$ (if it exists, or if it is nonempty),
\end{center}

neither ``$w_{1}^{-1}$ and $w_{2},$'' nor ``$w_{1}$ and $w_{2}^{-1}$'' is
admissible in $\Bbb{G}.$
\end{definition}

\strut Let $G$ be a finite graph,

\begin{center}
$G$ $=$ \quad $_{v_{1}}\bullet \overset{e_{1}}{\longleftarrow }\underset{%
v_{2}}{\bullet }\overset{e_{2}}{\longrightarrow }\bullet _{v_{3}}$
\end{center}

\strut

and let

\begin{center}
$X_{1}$ $=$ $\{v_{1},$ $e_{1},$ $e_{2}\},$ $X_{2}$ $=$ $\{e_{1}^{-1},$ $%
e_{2},$ $v_{3}\},$ $X_{3}$ $=$ $\{v_{2},$ $v_{3}\}$
\end{center}

be given subsets of the graph groupoid $\Bbb{G}$ of $G.$ Then, we can check
that the subset $X_{1}$ is not alternatively disconnected, because it does
not satisfy the condition (ii) of the definition. i.e., both ``$e_{1}^{-1}$
and $e_{2},$'' and ``$e_{2}^{-1}$ and $e_{1}"$ are admissible in $\Bbb{G}$.
Also, we can see the subset $X_{2}$ is alternatively disconnected. Indeed,
neither ``$e_{1}$ $=$ $(e_{1}^{-1})^{-1}$ and $e_{2},$'' nor ``$e_{2}^{-1}$
and $e_{1}^{-1}$'' is admissible in $\Bbb{G}.$ Clearly, the subset $X_{3}$
is alternatively disconnected, since it satisfies the conditions (i) and
(ii) of the above theorem.

\strut Also, all vertex sets of (finite) graphs are alternatively
disconnected in the above sense.

Now, let's go back to our main interest of this section. To become a graph
operator $T$ of $M_{G}$ to be unitary, both operators $T^{*}T$ and $TT^{*}$
must be the identity element

\begin{center}
$1_{M_{G}}$ $=$ $\underset{v\in V(\widehat{G})}{\sum }$ $L_{v}$ in $M_{G}.$
\end{center}

Thus we can obtain the following characterization.

\begin{theorem}
Let $G$ be a finite graph with

\begin{center}
$\left| V(G)\right| $ $\geq $ $2,$
\end{center}

and let $T$ $\in $ $M_{G}$ be a graph operator with its support $Supp(T).$
Then $T$ is unitary, if and only if

(i)$\ \ \ Supp(T)$ is alternatively disconnected,

(ii)\ \ the support $Supp(T)$ satisfies

\begin{center}
$\left( Supp(T)\right) ^{-1}\left( Supp(T)\right) $ $=$ $V(\widehat{G}),$
\end{center}

where $X^{-1}X$ $\overset{def}{=}$ $\{w_{1}^{-1}w_{2}$ $:$ $w_{1},$ $w_{2}$ $%
\in $ $X\},$ for all $X$ $\subset $ $\Bbb{G},$ and

(iii) the coefficients of $T$ satisfy

\begin{center}
$\underset{w\in Supp(T),\text{ }w^{-1}w=v}{\sum }$ $\left| t_{w}\right| ^{2}$
$=$ $1,$ for all $v$ $\in $ $V(\widehat{G}),$
\end{center}

in $\Bbb{C}.$
\end{theorem}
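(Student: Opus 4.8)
The plan is to work directly from the two product formulas already recorded, namely $T^{*}T = \sum_{(w_{1},w_{2})\in Supp(T)^{2}} \overline{t_{w_{1}}}\,t_{w_{2}}\,L_{w_{1}^{-1}w_{2}}$ and the mirror expression for $TT^{*}$, and to compare both against $1_{M_{G}} = \sum_{v\in V(\widehat{G})} L_{v}$. Since $T$ is unitary exactly when $T^{*}T = 1_{M_{G}} = TT^{*}$, and since the $\{L_{u}:u\in \Bbb{G}\}$ are linearly independent with $L_{\emptyset}=0$, the whole statement reduces to reading off, term by term, which reduced words $w_{1}^{-1}w_{2}$ (resp. $w_{1}w_{2}^{-1}$) survive and with what coefficient. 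First I would isolate two structural facts: writing $w = v\,w\,v'$ one has $w^{-1}w = v'$ (the terminal vertex) and $ww^{-1} = v$ (the source vertex), so the diagonal pairs $w_{1}=w_{2}$ of $T^{*}T$ contribute exactly $\sum_{v}\big(\sum_{w^{-1}w=v}\lvert t_{w}\rvert^{2}\big)L_{v}$; and, by the groupoid identity property, $w_{1}^{-1}w_{2}$ is a vertex only when $w_{1}=w_{2}$, so every off-diagonal contribution is either $0$ (non-admissible word) or a genuine nonempty reduced path. I will write $V_{0}=Supp(T)\cap V(\widehat{G})$ and $S_{P}=Supp(T)\cap FP_{r}(\widehat{G})$, with $s,t$ denoting the source and terminal maps.

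For the direction (i)--(iii) $\Rightarrow$ unitary, I would first note that \emph{alternative disconnectedness} forces $s$ and $t$ to be injective on $S_{P}$: two distinct reduced paths with equal source make $w_{1}^{-1}w_{2}$ admissible, and two with equal terminal make $w_{1}w_{2}^{-1}$ admissible. Using (i) the path--path off-diagonal terms of $T^{*}T$ vanish, while (ii), the equation $Supp(T)^{-1}Supp(T)=V(\widehat{G})$ whose right side carries no path element, kills the mixed vertex--path terms (it forces $s(S_{P})\cap V_{0}=\emptyset$) and guarantees every vertex is attained; hence $T^{*}T=\sum_{v}\big(\sum_{w^{-1}w=v}\lvert t_{w}\rvert^{2}\big)L_{v}$, which is $1_{M_{G}}$ precisely by (iii). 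The co-isometry identity is where a short counting step enters: injectivity gives $\lvert s(S_{P})\rvert=\lvert t(S_{P})\rvert=\lvert S_{P}\rvert$, while (ii) gives $s(S_{P})\cap V_{0}=\emptyset$ and $V_{0}\cup t(S_{P})=V(\widehat{G})$; comparing cardinalities forces $t(S_{P})=s(S_{P})=V(\widehat{G})\setminus V_{0}$, and in particular $t(S_{P})\cap V_{0}=\emptyset$. This makes the $TT^{*}$ computation a mirror of the $T^{*}T$ one, and (iii) transports through the bijections $s,t$ so that each source-sum $\sum_{s(w)=v}\lvert t_{w}\rvert^{2}$ is also $1$; thus $TT^{*}=1_{M_{G}}$ as well.

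For the converse I would test the isometry and co-isometry relations on the orthonormal system $\{\xi_{g}:g\in\Bbb{G}\}$, which cleanly sidesteps bookkeeping. From $T\xi_{v}=\sum_{t(w)=v}t_{w}\xi_{w}$ one gets $\lVert T\xi_{v}\rVert^{2}=\sum_{t(w)=v}\lvert t_{w}\rvert^{2}=1$ for every vertex $v$, which is exactly (iii) and already forces each vertex into $V_{0}\cup t(S_{P})$; the dual computation with $T^{*}$ gives the source-version. The vanishing of $\langle T\xi_{g},T\xi_{h}\rangle$ for $g\neq h$, together with the same for $T^{*}$, then forces the admissibility obstructions: no distinct pair of support paths may have a common source or a common terminal (this is (i)), and no mixed or stray product may produce a nonempty path while all vertices must be realized (this is the content of (ii)). Matching the surviving vertex coefficients of $T^{*}T$ to $1$ then re-delivers (iii).

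The main obstacle is precisely the final translation in the converse: ruling out that a nonempty reduced path survives as a coefficient of $T^{*}T$ (or $TT^{*}$). A priori a path $u=w_{1}^{-1}w_{2}$ can be produced by several pairs, and the scalars $\overline{t_{w_{1}}}\,t_{w_{2}}$ could cancel, so one cannot naively read off non-admissibility. I would dissolve this using the rigidity of reduced words in $\Bbb{G}$ (a reduced path together with one endpoint of the producing pair determines the other endpoint uniquely), the finiteness of $G$, and the fact that $T^{*}T$ and $TT^{*}$ are \emph{simultaneously} forced to be vertex-supported; the orthonormal-basis evaluation above makes the cancellation question into one about orthogonality of the vectors $\xi_{w_{1}g}$, which decouples it. Once no nonempty path can survive, conditions (i) and (ii) follow by inspecting which admissible products remain, and (iii) follows from the diagonal coefficients, completing both implications.
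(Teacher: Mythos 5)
Your sufficiency argument ((i)--(iii) $\Rightarrow$ unitary) is correct, and it is in fact tighter than the paper's own proof: the injectivity of $s(w)=ww^{-1}$ and $r(w)=w^{-1}w$ on the path part $S_{P}$ of the support, the cardinality step forcing $r(S_{P})=s(S_{P})=V(\widehat{G})\setminus V_{0}$, and the observation that (iii) then gives $\left| t_{w}\right| =1$ for every $w$ are exactly what is needed to control the mixed vertex--path products and to pass from the $T^{*}T$-condition to the $TT^{*}$-condition (two points the paper handles only by a vague appeal to the bijection $w\mapsto w^{-1}$). Your derivation of (iii) from $\left\| T\xi _{v}\right\| =1$ is likewise correct and cancellation-free. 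The genuine gap is in the converse, and it sits exactly where you flagged it: you must show that no nonempty reduced path can survive in $T^{*}T$ or $TT^{*}$ through cancellation of coefficients, and you propose to dissolve this via rigidity of reduced words, finiteness, and orthogonality of the vectors $\xi _{w_{1}g}$. No such argument can exist, because the cancellation genuinely occurs; the ``only if'' half of the statement is false.

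Take $G$ to be the single-edge graph $v_{1}\overset{e}{\longrightarrow }v_{2}$, so that $\Bbb{G}=\{v_{1},v_{2},e,e^{-1},\emptyset \}$, and set $T=\tfrac{1}{\sqrt{2}}\left( L_{v_{1}}+L_{v_{2}}+L_{e}-L_{e^{-1}}\right) .$ Here $M_{G}$ is $*$-isomorphic to $M_{2}(\Bbb{C})$ via $L_{v_{1}}\mapsto E_{11},$ $L_{v_{2}}\mapsto E_{22},$ $L_{e}\mapsto E_{12},$ $L_{e^{-1}}\mapsto E_{21}$ (matrix units), under which $T$ becomes the rotation $\frac{1}{\sqrt{2}}\left(
\begin{array}{ll}
1 & 1 \\
-1 & 1
\end{array}
\right) $; alternatively, using $L_{v_{1}}L_{e}=L_{e}=L_{e}L_{v_{2}},$ $L_{e^{-1}}L_{e}=L_{v_{2}},$ $L_{e}L_{e^{-1}}=L_{v_{1}},$ $L_{v_{2}}L_{e^{-1}}=L_{e^{-1}}=L_{e^{-1}}L_{v_{1}},$ and the vanishing of all other cross terms, one checks directly that $T^{*}T=TT^{*}=L_{v_{1}}+L_{v_{2}}=1_{M_{G}}.$ So $T$ is unitary, and conditions (i) and (iii) hold; but (ii) fails, since $v_{1}^{-1}e=e$ is a nonempty reduced path lying in $\left( Supp(T)\right) ^{-1}\left( Supp(T)\right) $, so this set is not $V(\widehat{G})$ (even after discarding $\emptyset $). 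The coefficient of $L_{e}$ in $T^{*}T$ comes from the two admissible pairs $(v_{1},e)$ and $(e^{-1},v_{2})$ and equals $\overline{t_{v_{1}}}t_{e}+\overline{t_{e^{-1}}}t_{v_{2}}=\frac{1}{2}-\frac{1}{2}=0$: distinct pairs produce the same path and cancel. This is precisely the phenomenon your sketch assumes away --- as does the paper's own proof, which asserts that an admissible off-diagonal pair yields a ``nonzero summand.'' Consequently, your proof that (i)--(iii) imply unitarity stands, and your proof that unitarity implies (iii) stands, but the rest of the converse cannot be completed, because unitarity does not imply (ii).
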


\begin{proof}
Assume that the given graph operator $T$ is unitary on $H_{G}.$ Then, by
definition,

(3.3.1)

\begin{center}
$T^{*}T$ $=$ $\underset{(w_{1},w_{2})\in Supp(T)^{2}}{\sum }$ $\overline{%
t_{w_{1}}}$ $t_{w_{2}}$ $L_{w_{1}^{-1}w_{2}}$ $=$ $\underset{v\in V(\widehat{%
G})}{\sum }$ $L_{v}$ $=$ $1_{M_{G}},$
\end{center}

and

(3.3.2)

\begin{center}
$TT^{*}$ $=$ $\underset{(y_{1},y_{2})\in Supp(T)^{2}}{\sum }$ $t_{y_{1}}%
\overline{t_{y_{2}}}$ $L_{y_{1}y_{2}^{-1}}$ $=$ $\underset{v\in V(\widehat{G}%
)}{\sum }$ $L_{v}$ $=$ $1_{M_{G}},$
\end{center}

in the graph von Neumann algebra $M_{G}$ of a finite connected graph $G.$
Notice here that, if there exists a pair $(w_{1},$ $w_{2})$ of distinct
elements $w_{1}$ $\neq $ $w_{2}$ in $Supp(T),$ such that $w_{1}^{-1}w_{2}$ $%
\neq $ $\emptyset ,$ equivalently, $w_{1}^{-1}$ and $w_{2}$ are admissible
in $\Bbb{G},$ then there exists an nonzero summand

\begin{center}
$\overline{t_{w_{1}}}$ $t_{w_{2}}$ $L_{w_{1}^{-1}w_{2}}$
\end{center}

in (3.3.1). By the distinctness of $w_{1}$ and $w_{2},$ and by the
assumption $w_{1}^{-1}w_{2}$ $\neq $ $\emptyset ,$ the element $%
w_{1}^{-1}w_{2}$ must be a nonempty reduced finite path in $\Bbb{G}.$ This
shows that the first equality (3.3.1) does not hold, and hence it
contradicts our unitary property of $T.$

Similarly, if $w_{1}w_{2}^{-1}$ $\neq $ $\emptyset ,$ then there exists an
nonzero summand

\begin{center}
$t_{w_{1}}$ $\overline{t_{w_{2}}}$ $L_{w_{1}w_{2}^{-1}}$
\end{center}

in (3.3.2), and hence this term breaks the unitary property of $T,$ which
contradicts our assumption for $T.$

Therefore, to satisfy the unitary property of $T,$ the support $Supp(T)$ of $%
T$ is alternatively disconnected, i.e., for any pair $(w_{1},$ $w_{2})$ of
distinct elements in $Supp(T),$ neither ``$w_{1}^{-1}$ and $w_{2},$'' nor ``$%
w_{1}$ and $w_{2}^{-1}$'' is admissible in $\Bbb{G}.$ Under the alternative
disconnectedness of $Supp(T),$ we can obtain the alternating form of the
left-hand side of (3.3.1):

(3.3.3)

\begin{center}
$
\begin{array}{ll}
T^{*}T & =\underset{(w_{1},w_{2})\in Supp(T)^{2},\text{ }w_{1}=w_{2}}{\sum }%
\overline{t_{w_{1}}}t_{w_{2}}L_{w_{1}^{-1}w_{2}} \\ 
& =\underset{w\in Supp(T)}{\sum }\overline{t_{w}}t_{w}L_{w^{-1}w} \\ 
& =\underset{w\in Supp(T)}{\sum }\left| t_{w}\right| ^{2}L_{w^{-1}w}.
\end{array}
$
\end{center}

Remark here that $w^{-1}w$ $\in $ $V(\widehat{G}),$ for all $w$ $\in $ $\Bbb{%
G}.$ By (3.3.3), we can re-write that $T$ is unitary if and only if

(3.3.4)

\begin{center}
$T^{*}T$ $=$ $\underset{w\in Supp(T)}{\sum }$ $\left| t_{w}\right| ^{2}$ $%
L_{w^{-1}w}$ $=$ $\underset{v\in V(\widehat{G})}{\sum }$ $L_{v}$ $=$ $%
1_{M_{G}},$
\end{center}

by the finiteness of $G.$ And the second equality of (3.3.4) can be refined
as follows:

(3.3.5)

\begin{center}
$
\begin{array}{ll}
\underset{w\in Supp(T)}{\sum }\left| t_{w}\right| ^{2}L_{w^{-1}w} & =%
\underset{v\in V(\widehat{G})}{\sum }\left( \underset{w\in
Supp(T),\,w^{-1}w=v}{\sum }\text{ }\left| t_{w}\right| ^{2}\text{ }%
L_{v}\right) \\ 
& =\underset{v\in V(\widehat{G})}{\sum }\left( \underset{w\in
Supp(T),\,w^{-1}w=v}{\sum }\left| t_{w}\right| ^{2}\right) L_{v}=1_{M_{G}}.
\end{array}
$
\end{center}

Therefore, by (3.3.5), the support of $T$ must satisfy

(3.3.6)

\begin{center}
$\left( Supp(T)^{-1}\right) \left( Supp(T)\right) $ $=$ $V(\widehat{G}),$
\end{center}

and, under the alternative disconnectedness (3.3.6) of $T$, the coefficients
of $T$ must satisfy

(3.3.7)

\begin{center}
$\underset{w\in Supp(T),\,w^{-1}w=v}{\sum }$ $\left| t_{w}\right| ^{2}$ $=$ $%
1,$ for all $v$ $\in $ $V(\widehat{G}),$
\end{center}

in $\Bbb{C}$, where

\begin{center}
$X^{-1}X$ $\overset{def}{=}$ $\{w_{1}^{-1}w_{2}$ $:$ $w_{1},$ $w_{2}$ $\in $ 
$X\},$ for all $X$ $\subset $ $\Bbb{G}.$
\end{center}

i.e., we can obtain that $T^{*}T$ $=$ $1_{M_{G}},$ if and only if the
support $Supp(T)$ is alternatively disconnected, and it satisfies (3.3.6),
and the coefficients of $T$ satisfy (3.3.7).

Similar to the above observation, we can get that $TT^{*}$ $=$ $1_{M_{G}},$
if and only if $Supp(T)$ is alternatively disconnected, and it satisfies
(3.3.6), and the coefficients of $T$ satisfies

(3.3.8)

\begin{center}
$\underset{y\in Supp(T),\,yy^{-1}=x}{\sum }$ $\left| t_{y}\right| ^{2}$ $=$ $%
1,$ for all $x$ $\in $ $V(\widehat{G}).$
\end{center}

However, it is easy to check that the conditions (3.3.7) and (3.3.8) are
equivalent, because there exists a bijection $g,$

\begin{center}
$g$ $:$ $w$ $\in $ $Supp(T)$ $\longmapsto $ $w^{-1}$ $\in $ $Supp(T)^{-1}.$
\end{center}

Therefore, we can conclude that the graph operator $T$ is unitary, if and
only if the support $Supp(T)$ of $T$ is alternatively disconnected, and it
also satisfies the conditions (3.3.6), and the coefficients of $T$ satisfy
(3.3.7) (or (3.3.8)).\strut
\end{proof}

Similar to the self-adjointness of graph operators, the unitary property of
graph operators are also determined by the admissibility on the graph
groupoids of given graphs and certain conditions on coefficients of the
operators.

\begin{remark}
\strut In the proof of the above theorem (the unitary characterization of
graph operators), where

\begin{center}
$\left| V(G)\right| $ $\geq $ $2,$
\end{center}

the alternative disconnectedness is crucial. Since $\left| V(G)\right| $ $>$ 
$1,$ all generating operators $L_{w}$'s of the graph von Neumann algebra $%
M_{G}$ are partial isometries. Moreover, the products $L_{w_{1}}$ ... $%
L_{w_{n}},$ for all $n$ $\in $ $\Bbb{N},$ are partial isometries, whose
initial and final spaces are ``not'' identified with the graph Hilbert space 
$H_{G}.$ Thus, to satisfy the unitary property, the products $L_{w_{1}w_{2}}$
either in $T^{*}T$ or in $TT^{*}$ must be the zero operator, whenever $w_{1}$
$\neq $ $w_{2}$ in $\Bbb{G}.$
\end{remark}

In the rest of this paper, we will consider following two examples.

\begin{example}
Let $T$ $=$ $\underset{v\in V(\widehat{G})}{\sum }$ $t_{v}$ $L_{v}$ be a
graph operator in $M_{G}.$ Then it is unitary, if and only if (i) $t_{v}$ $%
\neq $ $0,$ and (ii) $\left| t_{v}\right| ^{2}$ $=$ $1,$ for all $v$ $\in $ $%
V(\widehat{G}).$
\end{example}

\begin{example}
Let $G$ be a connected finite graph,

\strut

\begin{center}
$G$ $=$ \qquad $_{v_{1}}\bullet \overset{e_{1}}{\longrightarrow }\underset{%
v_{2}}{\bullet }\overset{e_{2}}{\longrightarrow }\underset{v_{3}}{\bullet }%
\overset{e_{3}}{\longrightarrow }\bullet _{v_{4}}$.
\end{center}

\strut

Let $T_{1}$ $=$ $t_{v_{1}}L_{v_{1}}$ $+$ $%
t_{e_{2}^{-1}}L_{e_{2}^{-1}}+t_{e_{2}e_{3}}$ $L_{e_{2}e_{3}}$ be a given
graph operator in the graph von Neumann algebra $M_{G}$ of $G.$ We can check
that

\begin{center}
$Supp(T_{1})$ $=$ $\{v_{1},$ $e_{2}^{-1},$ $e_{2}e_{3}\},$
\end{center}

and hence

\begin{center}
$\Pi _{1}$ $=$ $\left( Supp(T_{1})^{-1}\right) \left( Supp(T_{1})\right) $ $%
= $ $\{v_{1},$ $v_{2},$ $v_{4}\}.$
\end{center}

So, $\Pi _{1}$ $\neq $ $V(G)$ $=$ $V(\widehat{G}).$ Therefore, this graph
operator $T_{1}$ is not unitary.

Now, let $T_{2}$ $=$ $t_{v_{1}}L_{v_{1}}$ $+$ $t_{v_{3}}L_{v_{3}}$ $+$ $%
t_{e_{2}^{-1}}L_{e_{2}^{-1}}$ $+$ $t_{e_{2}e_{3}}L_{e_{2}e_{3}}.$ Then the
support

\begin{center}
$Supp(T_{2})$ $=$ $\{v_{1},$ $v_{3},$ $e_{2}^{-1},$ $e_{2}e_{3}\}$ of $T_{2}$
\end{center}

satisfies that

\begin{center}
$\Pi _{2}$ $=$ $\left( Supp(T_{2})^{-1}\right) \left( Supp(T_{2})\right) $ $%
= $ $\{v_{1},$ $v_{2},$ $v_{3},$ $v_{4}\}$ $=$ $V(\widehat{G}).$
\end{center}

Moreover, all the pairs $(w_{1},$ $w_{2})$ of distinct elements $w_{1}$ and $%
w_{2}$ of $Supp(T_{2})$ are alternatively disconnected. For instance,

\begin{center}
$\left( e_{2}^{-1}\right) ^{-1}(e_{2}e_{3})$ $=$ $e_{2}^{2}e_{3}$ $=$ $%
\emptyset $ $e_{3}$ $=$ $\emptyset ,$

$v_{3}^{-1}$ $e_{2}$ $=$ $v_{3}$ $e_{2}$ $=$ $\emptyset ,$ and $e_{2}^{-1}$ $%
v_{3}$ $=$ $\emptyset ,$
\end{center}

etc. Therefore, we can obtain that the operator $T_{2}$ is unitary on the
graph Hilbert space $H_{G},$ if and only if

\begin{center}
$\underset{w\in Supp(T_{2}),\,w^{-1}w=v_{1}}{\sum }$ $\left| t_{w}\right|
^{2}$ $=$ $\left| t_{v_{1}}\right| ^{2}$ $=$ $1,$

$\underset{w\in Supp(T_{2}),\,w^{-1}w=v_{2}}{\sum }$ $\left| t_{w}\right|
^{2}$ $=$ $\left| t_{v_{2}}\right| ^{2}$ $+$ $\left| t_{e_{2}^{-1}}\right|
^{2}$ $=$ $1,$

$\underset{w\in Supp(T_{2}),\,w^{-1}w=v_{3}}{\sum }$ $\left| t_{w}\right|
^{2}$ $=$ $\left| t_{v_{3}}\right| ^{2}$ $=$ $1,$
\end{center}

and

\begin{center}
$\underset{w\in Supp(T_{2}),\,w^{-1}w=v_{4}}{\sum }$ $\left| t_{w}\right|
^{2}$ $=$ $\left| t_{e_{2}e_{3}}\right| ^{2}$ $=$ $1.$
\end{center}

Simply, $T_{2}$ is unitary, if and only if

\begin{center}
$\left| t_{v_{1}}\right| ^{2}$ $=$ $\left| t_{v_{3}}\right| ^{2}$ $=$ $%
\left| t_{e_{2}e_{3}}\right| ^{2}$ $=$ $1,$ and $\left| t_{v_{2}}\right|
^{2} $ $+$ $\left| t_{e_{2}^{-1}}\right| ^{2}$ $=$ $1.$
\end{center}

\strut
\end{example}

\strut The above unitary characterization of graph operators (induced by
finite graphs) is in fact incomplete, since we did not consider the case
where a given graph $G$ satisfies $\left| V(G)\right| $ $=$ $1.$ If a finite
graph $G$ has only one vertex $v_{0},$ then it is graph-isomorphic to the
one-vertex-$\left| E(G)\right| $-multi-loop-edge graph $O_{\left|
E(G)\right| }.$ To make our unitary characterization of graph operators
complete, we need the following theorem.

\begin{theorem}
Let $O_{n}$ be the one-vertex-$n$-loop-edge graph with its graph groupoid $%
\Bbb{O}_{n},$ having its unique vertex $v_{O},$ and let $M_{O_{n}}$ be the
graph von Neumann algebra of $O_{n},$ for $n$ $\in $ $\Bbb{N}.$ Let

\begin{center}
$T$ $=$ $\underset{w\in Supp(T)}{\sum }$ $t_{w}$ $L_{w}$ $\in $ $M_{O_{n}}$
\end{center}

be a fixed graph operator. Then $T$ is unitary, if and only if

(i)$\ \ \ \left( Supp(T)\right) ^{-1}\left( Supp(T)\right) $ $=$ $\{v_{O}\},$

(ii)\ \ the coefficients $\{t_{w}$ $:$ $w$ $\in $ $Supp(T)\}$ of $T$
satisfies

\begin{center}
$\underset{(w_{1},w_{2})\in Supp(T)^{2}}{\sum }$ $\overline{t_{w_{1}}}$ $%
t_{w_{2}}$ $=$ $1.$
\end{center}
\end{theorem}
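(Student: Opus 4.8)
The plan is to exploit the special feature of the one-vertex graph $O_{n}$: since $O_{n}$ has the single vertex $v_{O}$, the identity of $M_{O_{n}}$ is $1_{M_{O_{n}}}=L_{v_{O}}$, which is \emph{itself} a finitely supported operator. Moreover, since every edge of $O_{n}$ is a loop at $v_{O}$, every finite path is admissible and the graph groupoid $\Bbb{O}_{n}$ carries no empty word; it is in fact the free group $F_{n}$ with identity $v_{O}$. Consequently each generator $L_{w}$ is unitary, because $L_{w}^{*}L_{w}=L_{w^{-1}w}=L_{v_{O}}=1_{M_{O_{n}}}=L_{ww^{-1}}=L_{w}L_{w}^{*}$; all products $w_{1}^{-1}w_{2}$ are defined; and $w_{1}^{-1}w_{2}=v_{O}$ holds if and only if $w_{1}=w_{2}$. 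I will also use that the family $\{L_{w}:w\in \Bbb{O}_{n}\}$ is linearly independent in $M_{O_{n}}$, so that two such finite sums coincide exactly when their coefficients coincide.

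For the direction ($\Leftarrow$), suppose (i) and (ii) hold. Condition (i) says every product $w_{1}^{-1}w_{2}$ with $w_{1},w_{2}\in Supp(T)$ equals $v_{O}$; by the last observation above this forces $w_{1}=w_{2}$ for every such pair, so $Supp(T)=\{w_{0}\}$ is a singleton. Then $L_{w_{1}^{-1}w_{2}}=L_{v_{O}}=1_{M_{O_{n}}}$ for every pair, and using the expansion of $T^{*}T$ recorded before the theorem,
\begin{center}
$T^{*}T=\underset{(w_{1},w_{2})\in Supp(T)^{2}}{\sum }\overline{t_{w_{1}}}\,t_{w_{2}}\,L_{w_{1}^{-1}w_{2}}=\left( \underset{(w_{1},w_{2})\in Supp(T)^{2}}{\sum }\overline{t_{w_{1}}}\,t_{w_{2}}\right) 1_{M_{O_{n}}}=1_{M_{O_{n}}}$
\end{center}
by (ii); the identical computation with $w_{1}w_{2}^{-1}$ in place of $w_{1}^{-1}w_{2}$ yields $TT^{*}=1_{M_{O_{n}}}$, so $T$ is unitary.

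For the direction ($\Rightarrow$), suppose $T$ is unitary, so that $T^{*}T=L_{v_{O}}$. Expanding $T^{*}T$ and comparing the coefficient of each $L_{g}$ (legitimate by linear independence) gives $\underset{w_{1}^{-1}w_{2}=g}{\sum }\overline{t_{w_{1}}}\,t_{w_{2}}$ equal to $1$ when $g=v_{O}$ and to $0$ otherwise. To obtain (i) I show $Supp(T)$ is a singleton. I would invoke that $F_{n}$ admits a bi-invariant total order $<$ (free groups are bi-orderable); letting $m$ and $M$ be the least and greatest elements of $Supp(T)$ under $<$, bi-invariance gives $w_{1}^{-1}w_{2}\leq m^{-1}M$ for all $w_{1},w_{2}\in Supp(T)$, with equality only for $(w_{1},w_{2})=(m,M)$. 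Hence the coefficient of $L_{m^{-1}M}$ in $T^{*}T$ is exactly $\overline{t_{m}}\,t_{M}\neq 0$; since this coefficient must vanish unless $m^{-1}M=v_{O}$, we are forced to have $m^{-1}M=v_{O}$, i.e. $m=M$, so $Supp(T)=\{w_{0}\}$ and (i) holds. Finally the $g=v_{O}$ coefficient reduces to $\left| t_{w_{0}}\right| ^{2}=1$, which is exactly (ii).

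The main obstacle is this extremal step in the forward direction, namely ruling out cancellation among the off-diagonal terms $L_{w_{1}^{-1}w_{2}}$. It is precisely the fact that the free group factors admit \emph{no} nontrivial finitely supported unitaries---every such unitary is a unimodular scalar times a single $L_{w_{0}}$---which makes the singleton conclusion (i) unavoidable. The bi-ordering of $F_{n}$ packages this cleanly; alternatively one can run a reduced-word-length argument, selecting a pair $(w_{1},w_{2})$ for which $w_{1}^{-1}w_{2}$ has maximal length so as to expose an uncancelled term, though the order-theoretic version avoids the bookkeeping about common prefixes.
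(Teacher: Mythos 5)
Your proof is correct, and in the forward direction it takes a genuinely different (and more careful) route than the paper. For ($\Leftarrow$) the two arguments coincide: condition (i) collapses every $L_{w_{1}^{-1}w_{2}}$ and every $L_{w_{1}w_{2}^{-1}}$ to $L_{v_{O}}$ $=$ $1_{M_{O_{n}}}$, and condition (ii) makes the resulting scalar equal to $1$; your extra observation that (i) already forces $Supp(T)$ to be a singleton (since $\Bbb{O}_{n}$ is a group, $w_{1}^{-1}w_{2}$ $=$ $v_{O}$ iff $w_{1}$ $=$ $w_{2}$) only makes the computation more transparent. The divergence is in ($\Rightarrow$). The paper argues: if (i) fails, pick one pair $(w_{1},$ $w_{2})$ with $w_{1}$ $\neq$ $w_{2}$, so that $T^{*}T$ ``contains a nonzero summand'' $\overline{t_{w_{1}}}t_{w_{2}}L_{w_{1}^{-1}w_{2}}$, hence $T^{*}T$ $\neq$ $1_{M_{O_{n}}}$. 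As literally written this has a cancellation gap: distinct pairs can yield the same group element (e.g. $e_{1}^{-1}(e_{1}e_{2})$ $=$ $e_{3}^{-1}(e_{3}e_{2})$ $=$ $e_{2}$), so after collecting terms the coefficient of $L_{w_{1}^{-1}w_{2}}$ is a sum over several pairs and could a priori vanish even though each individual summand is nonzero. Your extremal argument via a bi-invariant order on $F_{n}$ --- the coefficient of $L_{m^{-1}M}$ receives a contribution from the pair $(m,$ $M)$ alone, hence equals $\overline{t_{m}}t_{M}$ $\neq$ $0$, forcing $m^{-1}M$ $=$ $v_{O}$ and so $m$ $=$ $M$ --- is exactly what rules this out; it is the standard unique-product/trivial-units argument for group algebras of orderable groups, and it genuinely establishes that finitely supported unitaries in $L(F_{n})$ are unimodular multiples of a single $L_{w_{0}}$. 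What your route buys is rigor at the one step where it is really needed, at the cost of importing the external fact that free groups are bi-orderable (the unique product property would also suffice); the paper's route is shorter but implicitly assumes the no-cancellation property that is the actual content of the implication.
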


\begin{proof}
($\Leftarrow $) \strut Assume that a fixed graph operator $T$ of $M_{O_{n}}$
satisfies both conditions (i) and (ii). Then we can obtain that

$\qquad \qquad T^{*}T$ $=$ $\underset{(w_{1}^{-1},w_{2})\in \left(
Supp(T)\right) ^{-1}\times Supp(T)}{\sum }$ $\left( \overline{t_{w_{1}}}%
\text{ }t_{w_{2}}\right) $ $L_{w_{1}^{-1}w_{2}}$

$\qquad \qquad \qquad =$ $\underset{(w_{1},w_{2})\in Supp(T)^{2}}{\sum }$ $%
\left( \overline{t_{w_{1}}}\text{ }t_{w_{2}}\right) $ $L_{v_{O}}$

by (i)

$\qquad \qquad \qquad =$ $\left( \underset{(w_{1},w_{2})}{\sum }\left( 
\overline{t_{w_{1}}}\text{ }t_{w_{2}}\right) \right) $ $L_{v_{O}}$ $=$ $%
L_{v_{O}}$

by (ii). Notice that, by definition, $L_{v_{O}}$ is the identity element of $%
M_{O_{n}}.$ i.e., $L_{v_{O}}$ $=$ $1_{M_{O_{n}}}.$ Therefore, we have that

(3.3.9)

\begin{center}
$T^{*}T$ $=$ $1_{M_{O_{n}}}$.
\end{center}

Consider now $TT^{*}.$ Observe that, under hypothesis,

$\qquad \qquad TT^{*}$ $=$ $\underset{(w_{1},w_{2}^{-1})\in Supp(T)\times
\left( Supp(T)\right) ^{-1}}{\sum }$ $\left( t_{w_{1}}\overline{t_{w_{2}}}%
\right) $ $L_{w_{1}w_{2}^{-1}}$

$\qquad \qquad \qquad =$ $\underset{(w_{1},w_{2})\in Supp(T)}{\sum }\left(
t_{w_{1}}\overline{t_{w_{2}}}\right) $ $L_{v_{O}}$

by (i), and by the fact that:

\begin{center}
$
\begin{array}{ll}
\left( Supp(T)\right) ^{-1}\left( Supp(T)\right) & =\{v_{O}\}=\{v_{O}^{-1}\}
\\ 
& =\left( Supp(T)\right) \left( Supp(T)\right) ^{-1},
\end{array}
$
\end{center}

thus we can have

$\qquad \qquad \qquad =$ $\left( \underset{(w_{1},w_{2})}{\sum }(t_{w_{1}}%
\overline{t_{w_{2}}})\right) $ $L_{v_{O}}$ $=$ $\overline{\left( \underset{%
(w_{1},w_{2})}{\sum }(\overline{t_{w_{1}}}\text{ }t_{w_{2}})\right) }$ $%
L_{v_{O}}$

$\qquad \qquad \qquad =$ $\overline{1}$ $\cdot $ $L_{v_{O}}$ $=$ $1$ $\cdot $
$L_{v_{O}}$ $=$ $L_{v_{O}},$

by (ii). Therefore, we obtain that

(3.3.10)

\begin{center}
$TT^{*}$ $=$ $1_{M_{O_{n}}}.$
\end{center}

So, by (3.3.9) and (3.3.10), this graph operator $T$ is unitary.

($\Rightarrow $) Suppose a graph operator $T$ of $M_{O_{n}}$ is unitary.
Assume that $T$ does not satisfy the condition (i). Then we can pick a pair

\begin{center}
$(w_{1},$ $w_{2})$ $\in $ $(Supp(T))^{2},$
\end{center}

such that $w_{1}$ $\neq $ $w_{2},$ and $w_{1}^{-1}w_{2}$ $\neq $ $v_{O},$
equivalently, $w_{1}^{-1}w_{2}$ $\in $ $FP_{r}(\widehat{O_{n}}).$ This means
that the product $T^{*}T$ of $T^{*}$ and $T$ contains a nonzero summand $%
\overline{t_{w_{1}}}$ $t_{w_{2}}$ $L_{w_{1}^{-1}w_{2}}$. Thus,

\begin{center}
$T^{*}T$ $\neq $ $1_{M_{O_{n}}}$ $=$ $L_{v_{O}}.$
\end{center}

This contradicts our assumption that $T$ is unitary. Assume now that $T$
does not satisfy the condition (ii). Say

\begin{center}
$\underset{(w_{1},w_{2})\in Supp(T)^{2}}{\sum }$ $(\overline{t_{w_{1}}}$ $%
t_{w_{2}})$ $=$ $t_{0}$ $\neq $ $1$, in $\Bbb{C}.$
\end{center}

For convenience, assume $T$ satisfies the condition (i). Then the product $%
T^{*}T$ of $T^{*}$ and $T$ is identical to

\begin{center}
$T^{*}T$ $=$ $t_{0}$ $L_{v_{O}}$ $\neq $ $L_{v_{O}}$ $=$ $1_{M_{O_{n}}}.$
\end{center}

This contradict the unitary property of $T.$
\end{proof}

\strut The above theorem characterizes the unitary property of graph
operators induced by the one-vertex-multi-loop-edge graphs.

\strut

\textbf{Conclusion (Unitary Characterization of Graph Operators)}

Let $G$ be a finite graph and let $M_{G}$ be the graph von Neumann algebra
of $G.$ Let

\begin{center}
$T$ $=$ $\underset{w\in Supp(T)}{\sum }$ $t_{w}$ $L_{w}$ $\in $ $M_{G}$
\end{center}

by a graph operator.

(3.3.11) Assume that $\left| V(G)\right| $ $=$ $1.$ Then $T$ is unitary, if
and only if

\begin{center}
$\left( Supp(T)\right) ^{-1}\left( Supp(T)\right) $ $=$ $V(G),$
\end{center}

and

\begin{center}
$\underset{(w_{1},w_{2})\in Supp(T)^{2}}{\sum }$ $\overline{t_{w_{1}}}$ $%
t_{w_{2}}$ $=$ $1,$ in $\Bbb{C}.$
\end{center}

\strut

(3.3.12) Assume now that $\left| V(G)\right| $ $=$ $2.$ Then $T$ is unitary,
if and only if $Supp(T)$ is alternatively disconnected, and

\begin{center}
$\left( Supp(T)\right) ^{-1}\left( Supp(T)\right) $ $=$ $V(G),$
\end{center}

and

\begin{center}
$\underset{w\in Supp(T),\,w^{-1}w=v}{\sum }$ $\left| t_{w}\right| ^{2}$ $=$ $%
1,$ for all $v$ $\in $ $V(G).$
\end{center}

$\square $

\section{\strut Normality of Graph Operators}

In this section, we will consider the normality of graph operators. Let $G$
be a connected directed graph with its graph groupoid $\Bbb{G},$ and let $%
M_{G}$ $=$ $\overline{\Bbb{C}[L(\Bbb{G})]}^{w}$ be the graph von Neumann
algebra of $G$ in $B(H_{G}),$ where $(H_{G},$ $L)$ is the canonical
representation of $\Bbb{G},$ consisting of the graph Hilbert space $H_{G}$ $%
= $ $l^{2}(\Bbb{G}),$ and the canonical groupoid action $L$ of $\Bbb{G}.$

We are interested in the normality of graph operators in $M_{G}.$ Recall
that an operator $T$ is \emph{normal}, if $T^{*}T$ $=$ $TT^{*}.$ Before
checking the normality of a graph operator $T$ $\in $ $M_{G},$ we will
consider the hyponormality of $T$ in Section 4.1. Recall that an operator $T$
is \emph{hyponormal}, if $T^{*}T$ $-$ $TT^{*}$ is positive.

The hyponormality characterization of graph operators would give the
normality characterization directly. Notice here that (pure) hyponormal
operators and normal operators have few common analytic properties. So, in
general, we do not know how the hyponormality determines the normality.
However, in our graph-operator case, the hyponormal characterization
determines the normality characterization.

As we have seen in Section 3, the self-adjointness and the unitary property
of graph operators are characterized by the admissibility on the graph
groupoid $\Bbb{G}$ (equivalently, the combinatorial property of $G$ or $%
\widehat{G}$), and certain analytic data of coefficients. We hope to obtain
the similar normality characterization.

\subsection{Hyponormality}

To consider the normality of graph operators, we first characterize the
hyponormality of them. Note that the hyponormality, itself, is interesting
in operator theory (e.g., see [12], and [13]). For instance, the
hyponormality of Toeplitz operators have been studied widely (e.g., See [3],
and cited papers of [3]).

We may understand hyponormality (or co-hyponormality) as the generalized
normality. But keep in mind that hyponormal operators and normal operators
do not share analytic properties much. However, in our case, we can show
that hyponormality of graph operators and normality of graph operators are
combinatorially related.

In this section, we characterize the hyponormality of a given graph operator 
$T,$ in terms of the combinatorial information on a fixed graph groupoid and
the analytic data on the coefficients of $T,$ like in Sections 3.2, and 3.3.

Recall that an operator $T$ is \emph{positive} on a Hilbert space $H,$ if

\begin{center}
\strut $<T\xi ,$ $\xi >\;\geq $ $0,$ for all $\xi $ $\in $ $H.$
\end{center}

Here, $<,>$ means the inner product on $H,$ and $\left\| .\right\| $ means
the corresponding Hilbert norm induced by $<,>.$ If $T$ is a positive
operator on $H,$ we write

\begin{center}
$T$ $\geq $ $0_{H},$
\end{center}

where $0_{H}$ is the zero operator in $B(H).$ When $T_{1}$ and $T_{2}$ are
operators on $H,$ we write

\begin{center}
$T_{1}$ $\geq $ $T_{2},$
\end{center}

if the operator $T_{1}$ $-$ $T_{2}$ is a positive operator on $H.$\strut

Thus, by definition, an operator $T$ is \emph{hyponormal} on $H,$ if and
only if $T^{*}T$ $\geq $ $TT^{*}$ on $H,$ equivalently, the operator

\begin{center}
$S(T)$ $\overset{denote}{=}$ $[T^{*},$ $T]$ $\overset{def}{=}$ $T^{*}T$ $-$ $%
TT^{*}$
\end{center}

is a positive operator on $H,$ where $[A,$ $B]$ means the operator,

\begin{center}
$[A,$ $B]$ $\overset{def}{=}$ $AB$ $-$ $BA$, for all $A,$ $B$ $\in $ $B(H).$
\end{center}

We call the operator $[A,$ $B]$, the \emph{commutator} of $A$ and $B.$ In
particular, if $A$ $=$ $T^{*},$ and $B$ $=$ $T,$ for $T$ $\in $ $B(H),$ the
commutator $[T^{*},$ $T]$ is called the \emph{self-commutator of} $T.$

Notice here that the self-commutator $S(T)$ $=$ $[T^{*},$ $T]$ of every
operator $T$ is self-adjoint on $H.$

Define a two maps $s,$ $r$ $:$ $\Bbb{G}$ $\rightarrow $ $V(\widehat{G})$ by

\begin{center}
$r(w)$ $\overset{def}{=}$ $w^{-1}w,$ and $s(w)$ $\overset{def}{=}$ $ww^{-1},$
\end{center}

for all $w$ $\in $ $\Bbb{G}.$ i.e., these maps $r$ and $s$ are the range map
and the source map of the (graph) groupoid $\Bbb{G},$ in the sense of
Section 2.2.

\strut Now, fix a graph operator

\begin{center}
$T$ $=$ $\underset{w\in Supp(T)}{\sum }$ $t_{w}L_{w}$ $\in $ $M_{G},$
\end{center}

acting on the graph Hilbert space $H_{G}.$ The self-commutator $S(T)$ of $T$
is computed as follows:\strut

$\qquad S(T)$ $=$ $T^{*}T$ $-$ $TT^{*}$

(4.1.1)

$\qquad \qquad =$ $\underset{(w_{1},w_{2})\in Supp(T)^{2}}{\sum }$ $%
\overline{t_{w_{1}}}$ $t_{w_{2}}$ $L_{w_{1}^{-1}w_{2}}$ $-$ $\underset{%
(y_{1},y_{2})\in Supp(T)^{2}}{\sum }$ $t_{y_{1}}$ $\overline{t_{y_{2}}}$ $%
L_{y_{1}y_{2}^{-1}}$

$\qquad \qquad =$ $\underset{(w_{1},w_{2})\in Supp(T)^{2}}{\sum }$ $%
\overline{t_{w_{1}}}$ $t_{w_{2}}\left(
L_{w_{1}^{-1}w_{2}}-L_{w_{2}w_{1}^{-1}}\right) $

(4.1.2)

$\qquad \qquad =$ $\underset{(w_{1},w_{2})\in
Supp(T)^{2},\,w_{1}^{-1}w_{2}\neq \emptyset }{\sum }$ $\overline{t_{w_{1}}}$ 
$t_{w_{2}}$ $\left( L_{w_{1}^{-1}w_{2}}-L_{w_{2}w_{1}^{-1}}\right) $

$\qquad \qquad =$ $\left( \underset{w\in Supp(T)}{\sum }\left| t_{w}\right|
^{2}\left( L_{r(w)}-L_{s(w)}\right) \right) $

$\qquad \qquad \qquad \qquad +$ $\left( \underset{(w_{1},w_{2})\in
Supp(T),\,w_{1}\neq w_{2},\,w_{1}^{-1}w_{2}\neq \emptyset }{\sum }\text{ }%
\overline{t_{w_{1}}}\text{ }t_{w_{2}}\left(
L_{w_{1}^{-1}w_{2}}-L_{w_{2}w_{1}^{-1}}\right) \right) $

where $r(w)$ $=$ $w^{-1}w,$ and $s(w)$ $=$ $ww^{-1},$ for all $w$ $\in $ $%
\Bbb{G}$

$\qquad \qquad =$ $\left( \underset{w\in Supp(T)}{\sum }\left| t_{w}\right|
^{2}\left( L_{r(w)}-L_{s(w)}\right) \right) $

\begin{center}
$+$ $\left( \underset{(w_{1},w_{2})\in Supp(T),\,w_{1}\neq w_{2}^{\pm
1},\,w_{1}^{-1}w_{2}\neq \emptyset }{\sum }\mathcal{S}_{(w_{1},w_{2})}%
\right) ,$
\end{center}

where

\strut

\begin{center}
$\mathcal{S}_{(w_{1},w_{2})}$ $\overset{def}{=}$ $\left( \overline{t_{w_{1}}}%
t_{w_{2}}\left( L_{w_{1}^{-1}w_{2}}-L_{w_{2}w_{1}^{-1}}\right) +t_{w_{1}}%
\overline{t_{w_{2}}}\left( L_{w_{2}^{-1}w_{1}}-L_{w_{1}w_{2}^{-1}}\right)
\right) ,$
\end{center}

\strut \strut

for all $(w_{1},$ $w_{2})$ $\in $ $Supp(T)^{2},$ such that $w_{1}$ $\neq $ $%
w_{2}^{\pm 1}$

\strut

\strut (4.1.3)

$\qquad \qquad =$ $\left( \underset{w\in Supp(T)}{\sum }\left| t_{w}\right|
^{2}\left( L_{r(w)}-L_{s(w)}\right) \right) $

\begin{center}
$
\begin{array}{ll}
+\left( \underset{(w_{1},w_{2})\in Supp(T),\,w_{1}\neq w_{2}^{\pm
1},\,w_{1}^{-1}w_{2}\neq \emptyset }{\sum }\right. & \left( \left( \overline{%
t_{w_{1}}}t_{w_{2}}L_{w_{1}^{-1}w_{2}}+t_{w_{1}}\overline{t_{w_{2}}}%
L_{w_{2}^{-1}w_{1}}\right) \right. \\ 
& \left. \left. \quad -\left( \overline{t_{w_{1}}}%
t_{w_{2}}L_{w_{2}w_{1}^{-1}}+t_{w_{1}}\overline{t_{w_{2}}}%
L_{w_{1}w_{2}^{-1}}\right) \right) \right) .
\end{array}
$
\end{center}

\strut The computation (4.1.3) indeed shows that the self-commutator $S(T)$
is self-adjoint on the graph Hilbert space $H_{G},$ since each summand of
(4.1.3) is self-adjoint. Recall that if two operators are self-adjoint, then
the addition of these two operators is again self-adjoint.

The hyponormality of $T$ is guaranteed by the positivity of the self-adjoint
operator $S(T).$ In general, it is not easy to check when a self-adjoint
operator $S$ is positive, because, for example, it is hard to see when the
spectrum $spec(S)$ (contained in $\Bbb{R}$) is contained in $\Bbb{R}_{0}^{+}$
$=$ $\{r$ $\in $ $\Bbb{R}$ $:$ $r$ $\geq $ $0\}.$

However, in our graph-operator case, we can check the positivity of $S(T)$
of $T,$ by (4.1.1), (4.1.2), (4.1.3), and the computations,

\begin{center}
$<S(T)\xi _{x},$ $\xi _{x}>,$ and $<S(T)$ $\xi _{x},$ $\xi _{y}>,$
\end{center}

for $x,$ $y$ $\in $ $\Bbb{G}$ $\setminus $ $\{\emptyset \}$ (equivalently,
for $\xi _{x},$ $\xi _{y}$ $\in $ $\mathcal{B}_{H_{G}}$ in $H_{G}$), where $%
<,>$ means the inner product on $H_{G}.$ To check the positivity of $S(T),$
we have to show that

\begin{center}
$<S(T)$ $\xi ,$ $\xi >$ $\geq $ $0,$ for all $\xi $ $\in $ $H_{G}.$
\end{center}

Since the collection of vectors

\begin{center}
$\eta $ $=$ $\underset{x\in \Bbb{G}}{\sum }$ $r_{x}$ $\xi _{x}$ $\in $ $%
H_{G},$ with $r_{x}$ $\in $ $\Bbb{C},$
\end{center}

is dense in $H_{G},$ it is enough to show that

\begin{center}
$<S(T)\eta ,$ $\eta >$ $\geq $ $0,$ for all $\eta $ $\in $ $\mathcal{H}_{G},$
\end{center}

where

\begin{center}
$\mathcal{H}_{G}$ $\overset{def}{=}$ $\left\{ \eta =\underset{x\in \Bbb{G}}{%
\sum }r_{x}\xi _{x}\left| r_{x}\in \Bbb{C},\xi _{x}\in \mathcal{B}%
_{H_{G}}\right. \right\} $ $\subseteq $ $H_{G}.$
\end{center}

\begin{lemma}
\strut Let $L_{w}$ $\in $ $M_{G}$ be a generating operator of $M_{G}$
induced by $w$ $\in $ $\Bbb{G}.$ Then

(4.1.4)

\begin{center}
$<L_{w}$ $\xi _{x},$ $\xi _{y}$ $>$ $=$ $\delta _{r(w),\,s(x)}$ $\delta
_{wx,\,y},$
\end{center}

where $\delta $ means the Kronecker delta.
\end{lemma}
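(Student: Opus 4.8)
The plan is to reduce everything to two facts already available in the excerpt: the action rule $L_w \xi_x = \xi_{wx}$ for the canonical multiplication operators, and the orthonormality of the canonical vectors $\{\xi_w : w \in \Bbb{G}\setminus\{\emptyset\}\}$ in the graph Hilbert space $H_G = l^2(\Bbb{G})$, together with the convention $\xi_\emptyset = 0_{H_G}$. Granting these, computing $\langle L_w \xi_x, \xi_y\rangle$ becomes a matter of bookkeeping between the empty word and the two Kronecker deltas on the right-hand side.

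First I would record the orthonormality $\langle \xi_{w_1}, \xi_{w_2}\rangle = \delta_{w_1,w_2}$ for nonempty reduced $w_1, w_2$. This follows from the defining Stinespring inner product: $\langle \xi_{w_1},\xi_{w_2}\rangle_G$ is governed by $E(w_1^{-1} w_2)$, and since $E$ is the conditional expectation onto $\mathcal{D}_G$ keeping only the vertex part, $E(w_1^{-1}w_2)$ is nonzero exactly when $w_1^{-1} w_2 \in V(\widehat{G})$. In the groupoid $\Bbb{G}$, the reduced word $w_1^{-1} w_2$ is a vertex if and only if $w_2 = w_1$ (in which case it equals $r(w_1)$), and this yields the claimed orthonormality; the convention $\xi_\emptyset = 0$ disposes of the degenerate case.

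Next I would apply the action rule to get $L_w \xi_x = \xi_{wx}$, where $wx$ denotes the groupoid product in $\Bbb{G}$. The key combinatorial observation is that $wx \neq \emptyset$ precisely when $w$ and $x$ are admissible, i.e. when the terminal vertex of $w$ equals the initial vertex of $x$; in terms of the range and source maps this is exactly $r(w) = s(x)$. One must check that this stays correct even when the concatenation $wx$ cancels down to a vertex under the reduction (RR): the product is nonempty under this condition regardless of how much reduction occurs, so the equivalence $wx\neq\emptyset \iff r(w)=s(x)$ is unaffected.

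Finally I would split into two cases. If $r(w) \neq s(x)$, then $wx = \emptyset$, so $L_w \xi_x = \xi_\emptyset = 0$ and the inner product vanishes, matching the vanishing factor $\delta_{r(w), s(x)} = 0$ on the right. If $r(w) = s(x)$, then $\delta_{r(w),s(x)} = 1$ and $\langle L_w\xi_x, \xi_y\rangle = \langle \xi_{wx}, \xi_y\rangle = \delta_{wx, y}$ by the orthonormality established above. Combining the cases gives $\langle L_w\xi_x,\xi_y\rangle = \delta_{r(w),s(x)}\,\delta_{wx,y}$, as claimed. The only genuinely delicate point is the admissibility equivalence $wx\neq\emptyset \iff r(w)=s(x)$ and keeping the empty-word convention consistent with the deltas; the remaining steps are routine.
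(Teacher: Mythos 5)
Your proposal is correct and follows essentially the same route as the paper's own proof: apply the multiplication rule $L_{w}\xi _{x}=\xi _{wx}$, split on whether $r(w)=s(x)$ (equivalently whether $wx=\emptyset $, using $\xi _{\emptyset }=0_{H_{G}}$), and then invoke orthonormality of the vectors $\xi _{w}$ to obtain $\delta _{wx,\,y}$. The only difference is that you additionally justify the orthonormality from the Stinespring inner product and the conditional expectation $E$, a step the paper takes for granted by simply noting $\xi _{wx},\xi _{y}\in \mathcal{B}_{H_{G}}\cup \{0_{H_{G}}\}$.
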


\begin{proof}
\strut Compute

$\qquad <L_{w}$ $\xi _{x},$ $\xi _{y}$ $>$ $=$ $<\xi _{wx},$ $\xi _{y}$ $>$

\strut

$\qquad \qquad \qquad =$ $\left\{ 
\begin{array}{ll}
<\xi _{wx},\text{ }\xi _{y}> & \text{if }r(w)=s(x) \\ 
<\xi _{\emptyset },\text{ }\xi _{y}>\text{ }=0 & \text{otherwise}
\end{array}
\right. $

\strut

$\qquad \qquad \qquad =$ $\delta _{r(w),\,s(x)}$ $<\xi _{wx},$ $\xi _{y}$ $>$

\strut

$\qquad \qquad \qquad =$ $\left\{ 
\begin{array}{ll}
\delta _{r(w),\,s(x)}\text{ }\cdot \text{ }1 & \text{if }wx=y \\ 
\delta _{r(w),\,s(x)}\text{ }\cdot \text{ }0 & \text{otherwise}
\end{array}
\right. $

\strut since $\xi _{wx},$ $\xi _{y}$ $\in $ $\mathcal{B}_{H_{G}}$ $\cup $ $%
\{0_{H_{G}}\}$

$\qquad \qquad \qquad =$ $\delta _{r(w),\,s(x)}$ $\delta _{wx,\,y}.$

Therefore,

\begin{center}
$<L_{w}\xi _{x,\,},$ $\xi _{y}>$ $=$ $\delta _{r(w),\,s(x)}$ $\delta
_{wx,\,y},$
\end{center}

for all $w,$ $x,$ $y$ $\in $ $\Bbb{G}.$
\end{proof}

\strut By (4.1.4), we can obtain the following lemma.

\begin{lemma}
Let $T$ $=$ $\underset{w\in Supp(T)}{\sum }$ $t_{w}$ $L_{w}$ $\in $ $M_{G}$
be a graph operator, and let $\xi $ $=$ $\underset{x\in \Bbb{G}}{\sum }$ $%
r_{x}$ $\xi _{x}$ $\in $ $\mathcal{H}_{G}$ be a vector in $H_{G}.$ Then

(4.1.5)

$\qquad <S(T)$ $\xi ,$ $\xi $ $>$

$\qquad \qquad =$ $\underset{(x,y)\in \Bbb{G}^{2}}{\sum }r_{x}\overline{r_{y}%
}$ $\left( \underset{(w_{1},w_{2})\in Supp(T)^{2},\,w_{1}^{-1}w_{2}\neq
\emptyset }{\sum }\overline{t_{w_{1}}}t_{w_{2}}\delta
_{r(w_{1}^{-1}w_{2}),\,s(x)}\delta _{w_{1}^{-1}w_{2}x,\,y}\right. $

$\qquad \qquad \qquad \qquad \qquad \qquad \quad \left. -\underset{%
(y_{1},y_{2})\in Supp(T)^{2},\,y_{1}y_{2}^{-1}\neq \emptyset }{\sum }%
t_{y_{1}}\overline{t_{y_{2}}}\delta _{r(y_{1}y_{2}^{-1}),\,s(x)}\delta
_{y_{1}y_{2}^{-1}x,\,y}\right) .$

$\square $
\end{lemma}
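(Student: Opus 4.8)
The plan is to reduce the computation of $<S(T)\xi,\,\xi>$ to repeated application of the single-generator formula (4.1.4) from the previous lemma. The starting point is the expansion of the self-commutator already recorded in (4.1.1)--(4.1.2), which writes $S(T)$ as a finite linear combination of generating operators: a sum of terms $\overline{t_{w_1}}t_{w_2}L_{w_1^{-1}w_2}$ over admissible pairs (those with $w_1^{-1}w_2\neq\emptyset$) minus a sum of terms $t_{y_1}\overline{t_{y_2}}L_{y_1 y_2^{-1}}$ over admissible pairs (those with $y_1 y_2^{-1}\neq\emptyset$). Since everything in sight is a finite sum, no convergence issues arise and the inner product may be distributed freely.

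First I would use that the inner product $<,>$ on $H_G$ is linear in the first variable and conjugate-linear in the second. Writing $\xi$ $=$ $\underset{x\in\Bbb{G}}{\sum}$ $r_x\xi_x$, this yields

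\begin{center}
$<S(T)\xi,\,\xi>$ $=$ $\underset{(x,y)\in\Bbb{G}^2}{\sum}$ $r_x\,\overline{r_y}\,<S(T)\xi_x,\,\xi_y>,$
\end{center}

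so that the whole computation is reduced to evaluating $<S(T)\xi_x,\,\xi_y>$ for each ordered pair of basis vectors $\xi_x,\xi_y$ in $\mathcal{B}_{H_G}$.

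Next I would substitute the two-part expression for $S(T)$ into $<S(T)\xi_x,\,\xi_y>$ and distribute the inner product across the finitely many summands. Each summand is a scalar multiple of a single generating operator $L_u$, so (4.1.4) applies directly: for the first group it replaces $<L_{w_1^{-1}w_2}\xi_x,\,\xi_y>$ by $\delta_{r(w_1^{-1}w_2),\,s(x)}\,\delta_{w_1^{-1}w_2 x,\,y}$, and for the second group it replaces $<L_{y_1 y_2^{-1}}\xi_x,\,\xi_y>$ by $\delta_{r(y_1 y_2^{-1}),\,s(x)}\,\delta_{y_1 y_2^{-1}x,\,y}$. Reinserting these into the outer sum over $(x,y)$ produces precisely the right-hand side of (4.1.5), with the first group of deltas contributing the added part of the bracket and the second group the subtracted part.

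The argument is a routine expansion with no genuine difficulty; the only point demanding care is the bookkeeping of summation indices. One must keep the admissibility restrictions $w_1^{-1}w_2\neq\emptyset$ and $y_1 y_2^{-1}\neq\emptyset$ carried over from (4.1.2) in force while the Kronecker deltas of (4.1.4) are introduced, and verify that no summand is lost or double-counted when the two groups are recombined under the common index $(x,y)$. Since the factor $\delta_{r(\cdot),\,s(x)}$ merely records the further admissibility of the relevant product with $x$, these conditions are mutually compatible, and the reorganization of the two sums under a single index $(x,y)$ goes through without incident, giving the stated identity.
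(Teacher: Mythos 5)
Your proposal is correct and follows exactly the route the paper intends: the paper dispenses with the proof by remarking it is ``straightforward, by (4.1.1) and (4.1.4),'' and your argument is precisely that expansion --- sesquilinear distribution of the inner product over $\xi=\sum_{x}r_{x}\xi_{x}$, substitution of the finite-sum form of $S(T)$ from (4.1.1)--(4.1.2), and termwise application of the Kronecker-delta formula (4.1.4). The bookkeeping of the admissibility restrictions $w_{1}^{-1}w_{2}\neq\emptyset$ and $y_{1}y_{2}^{-1}\neq\emptyset$ is handled correctly, so nothing is missing.
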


\strut The proof of the above theorem is straightforward, by (4.1.1) and
(4.1.4). Now, we denote the summands

\strut $\qquad r_{x}\overline{r_{y}}$ $\left( \underset{(w_{1},w_{2})\in
Supp(T)^{2},\,w_{1}^{-1}w_{2}\neq \emptyset }{\sum }\overline{t_{w_{1}}}%
t_{w_{2}}\delta _{r(w_{1}^{-1}w_{2}),\,s(x)}\delta
_{w_{1}^{-1}w_{2}x,\,y}\right. $

$\qquad \qquad \qquad \qquad \qquad \qquad \quad \left. -\underset{%
(y_{1},y_{2})\in Supp(T)^{2},\,y_{1}y_{2}^{-1}\neq \emptyset }{\sum }%
t_{y_{1}}\overline{t_{y_{2}}}\delta _{r(y_{1}y_{2}^{-1}),\,s(x)}\delta
_{y_{1}y_{2}^{-1}x,\,y}\right) $

of (4.1.5) by $\Delta _{xy}.$ By (4.1.3), each summand $\Delta _{xy}$ has
its (kind of) pair $\Delta _{yx},$ in the formula (4.1.5),

$\qquad \Delta _{yx}$ $=$ $r_{y}$ $\overline{r_{x}}$ $\left( \underset{%
(w_{1},w_{2})\in Supp(T)^{2},\,w_{1}^{-1}w_{2}\neq \emptyset }{\sum }%
\overline{t_{w_{2}}}t_{w_{1}}\delta _{r(w_{2}^{-1}w_{1}),\,s(y)}\delta
_{x,\,w_{2}^{-1}w_{1}\,y}\right. $

$\qquad \qquad \qquad \qquad \qquad \qquad \quad \left. -\underset{%
(y_{1},y_{2})\in Supp(T)^{2},\,y_{1}y_{2}^{-1}\neq \emptyset }{\sum }%
t_{y_{2}}\overline{t_{y_{1}}}\delta _{r(y_{2}y_{1}^{-1}),\,s(y)}\delta
_{x,y_{2}y_{1}^{-1}\,y}\right) .$

\strut i.e.,

(4.1.6)

\begin{center}
$<S(T)$ $\xi ,$ $\xi $ $>$ $=$ $\underset{(x,y)\in \Bbb{G}^{2}}{\sum }$ $%
r_{x}$ $\overline{r_{y}}$ $\Delta _{xy}$ $=$ $\underset{(x,y)\in \Bbb{G}^{2}%
}{\sum }$ $r_{y}$ $\overline{r_{x}}$ $\Delta _{yx},$
\end{center}

for all $\xi $ $=$ $\underset{x\in \Bbb{G}}{\sum }$ $r_{x}$ $\xi _{x}$ $\in $
$\mathcal{H}_{G}$ $\subseteq $ $H_{G}.$

\strut The following theorem is the characterization of hyponormal graph
operators.

\begin{theorem}
Let $T$ $=$ $\underset{w\in Supp(T)}{\sum }$ $t_{w}$ $L_{w}$ be a graph
operator in the graph von Neumann algebra $M_{G}$ of $G.$ Then $T$ is
hyponormal, if and only if

(4.1.7)

\begin{center}
$\{r(w)$ $:$ $w$ $\in $ $\Pi _{T^{*}T}\}$ $\supseteq $ $\{r(w)$ $:$ $w$ $\in 
$ $\Pi _{TT^{*}}\}$ in $V(\widehat{G}),$
\end{center}

where

\begin{center}
$\Pi _{T^{*}T}$ $\overset{def}{=}$ $\left( \left( Supp(T)\right) ^{-1}\left(
Supp(T)\right) \right) $ $\setminus $ $\{\emptyset \},$
\end{center}

and

\begin{center}
$\Pi _{TT^{*}}$ $\overset{def}{=}$ $\left( \left( Supp(T)\right) \left(
Supp(T)\right) ^{-1}\right) $ $\setminus $ $\{\emptyset \},$
\end{center}

in $\Bbb{G},$ and the coefficients of $T$ satisfies

(4.1.8)

\begin{center}
$\left( \underset{w_{1}^{-1}w_{2}\in \Pi _{T^{*}T},\,r(w_{1}^{-1}w_{2})=v}{%
\sum }\overline{t_{w_{1}}}t_{w_{2}}\right) $ $\geq $ $\left( \underset{%
y_{1}y_{2}^{-1}\in \Pi _{TT^{*}},\,r(y_{1}y_{2}^{-1})=v}{\sum }t_{w_{1}}%
\overline{t_{w_{2}}}\right) ,$
\end{center}

in $\Bbb{R}$ $\subset $ $\Bbb{C},$ for all $v$ $\in $ $V(\widehat{G}).$
\end{theorem}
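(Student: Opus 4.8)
The plan is to test positivity of the self-adjoint self-commutator $S(T)=T^{*}T-TT^{*}$ on the dense subspace $\mathcal{H}_{G}$, using the explicit inner-product formula (4.1.5)--(4.1.6) together with Lemma 4.1. The first step I would carry out is a \emph{block decomposition by range vertex}. Writing $S(T)=\sum_{u}c_{u}L_{u}$ with $c_{u}$ the net coefficient (the $T^{*}T$-coefficient minus the $TT^{*}$-coefficient of $L_{u}$), formula (4.1.4) gives $\langle S(T)\xi_{x},\xi_{y}\rangle=c_{yx^{-1}}$, which is nonzero only when $yx^{-1}\neq\emptyset$, i.e. only when $r(x)=r(y)$. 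Hence, writing $\mathcal{K}_{v}$ for the closed span of $\{\xi_{x}:r(x)=v\}$, one checks directly (using $r(wx)=x^{-1}r(w)x=r(x)$ whenever $wx$ is admissible) that both $T$ and $T^{*}$ preserve each $\mathcal{K}_{v}$, so $T=\bigoplus_{v\in V(\widehat{G})}T_{v}$ with $T_{v}=T|_{\mathcal{K}_{v}}$, and $S(T)|_{\mathcal{K}_{v}}=[T_{v}^{*},T_{v}]$. Thus $T$ is hyponormal if and only if each block self-commutator is positive, which is exactly why the conditions (4.1.7)--(4.1.8) are indexed by $v\in V(\widehat{G})$.

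The second ingredient is a bookkeeping identity for the range map: whenever the products are admissible one has $r(w_{1}^{-1}w_{2})=r(w_{2})$ and $r(y_{1}y_{2}^{-1})=s(y_{2})$. Consequently the partial-isometry summands $L_{w_{1}^{-1}w_{2}}$ of $T^{*}T$ that act inside the block $\mathcal{K}_{v}$ are precisely those with $r(w_{2})=v$, whereas the summands $L_{y_{1}y_{2}^{-1}}$ of $TT^{*}$ acting inside $\mathcal{K}_{v}$ are precisely those with $s(y_{2})=v$. Grouping the coefficients block-by-block is what converts the comparison $T^{*}T\geq TT^{*}$ into the two range-indexed scalar sums displayed on the left and right of (4.1.8); and since $\{r(w):w\in\Pi_{T^{*}T}\}=\{r(w):w\in Supp(T)\}$ while $\{r(w):w\in\Pi_{TT^{*}}\}=\{s(w):w\in Supp(T)\}$, the inclusion (4.1.7) is exactly the statement that every block charged by $TT^{*}$ is also charged by $T^{*}T$.

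For the necessity direction I would first feed the basis vectors $\xi_{x}$ into the form: by (4.1.4) the diagonal entry $\langle S(T)\xi_{x},\xi_{x}\rangle$ equals the coefficient of $L_{s(x)}$ in $S(T)$, namely $\sum_{r(w)=s(x)}|t_{w}|^{2}-\sum_{s(y)=s(x)}|t_{y}|^{2}$, so positivity forces each vertex-coefficient of $S(T)$ to be nonnegative; a vertex $v$ that is the source of some support element but the range of none would then make this quantity strictly negative, giving (4.1.7). Evaluating the form on finite superpositions supported in a single range block, and grouping the resulting cross terms by the range identities above, upgrades these diagonal inequalities to the grouped inequalities (4.1.8). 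For sufficiency I would assume (4.1.7)--(4.1.8) and reverse the reassembly: (4.1.7) ensures no block carries a negative defect from $TT^{*}$ that $T^{*}T$ fails to see, and (4.1.8) supplies the quantitative domination of the $TT^{*}$-weights by the $T^{*}T$-weights on every block, whence $\langle S(T)\xi,\xi\rangle\geq0$ for $\xi\in\mathcal{K}_{v}$ and hence on all of $H_{G}$.

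The main obstacle I anticipate is the control of the \emph{off-diagonal, non-vertex} partial isometries $L_{u}$ (with $u$ a nonempty reduced path) that appear in both $T^{*}T$ and $TT^{*}$: a priori the positivity of the quadratic form could impose separate constraints on each cross coefficient $c_{u}$ rather than only on their range-grouped sums. The heart of the argument is to show that, after the block decomposition, the self-adjoint cross terms $\mathcal{S}_{(w_{1},w_{2})}$ of (4.1.3) reorganize so that positivity of $[T_{v}^{*},T_{v}]$ is governed exactly by the vertex-indexed scalar data (4.1.8) together with the support inclusion (4.1.7), with no residual off-diagonal condition surviving. The pairing of each $\mathcal{S}_{(w_{1},w_{2})}$ with its conjugate partner, combined with $r(w_{1}^{-1}w_{2})=r(w_{2})$, is the lever I expect to make this collapse go through.
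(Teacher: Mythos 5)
Your structural setup is sound and in places cleaner than the paper's bookkeeping: the orthogonal decomposition $H_{G}=\bigoplus_{v}\mathcal{K}_{v}$ by range vertex is legitimate (left multiplication preserves $r(x)$, so $T$ and $T^{*}$ both preserve each block, and positivity of $S(T)$ is equivalent to positivity of every block self-commutator), the range identities $r(w_{1}^{-1}w_{2})=r(w_{2})$ and $r(y_{1}y_{2}^{-1})=s(y_{2})$ are correct, and your diagonal-entry argument for the necessity of (4.1.7) is in substance exactly the paper's argument: the paper tests on the basis vector $\xi _{w_{0}}$ with $s(w_{0})\in \mathcal{R}_{TT^{*}}\setminus \mathcal{R}_{T^{*}T}$ and obtains $\langle S(T)\xi _{w_{0}},\xi _{w_{0}}\rangle =-\left| t_{w_{0}}\right| ^{2}<0$, which is your statement that a vertex charged by $TT^{*}$ but not by $T^{*}T$ makes the vertex coefficient of $S(T)$ strictly negative.

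There is, however, a genuine gap, and you name it yourself without closing it: the entire equivalence between positivity of the block commutators $[T_{v}^{*},T_{v}]$ and the range-grouped scalar inequalities (4.1.8) is deferred to a hoped-for ``collapse'' of the cross terms, which your proposal never carries out. This is not a routine verification. Within a block $\mathcal{K}_{v}$ the self-commutator is an infinite matrix with entries $c_{yx^{-1}}$ indexed by pairs $x,y$ with $r(x)=r(y)=v$, and positivity of such a matrix is a priori strictly stronger than nonnegativity of particular sums of its entries; conversely, to extract (4.1.8) from positivity one must exhibit concrete test vectors whose quadratic-form value equals, or at least shares the sign of, the difference of the two sides of (4.1.8). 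The paper does produce such vectors: in the necessity direction it forms $\eta =\sum \left( \xi _{x}+\xi _{w_{1}^{-1}w_{2}x}\right) +\sum \left( \xi _{y}+\xi_{y_{1}y_{2}^{-1}y}\right) $ over the pairs with range $v_{0}$ and asserts $\langle S(T)\eta ,\eta \rangle =\Delta _{xy}^{o}(v_{0})<0$, whereas your proposal stops at ``finite superpositions supported in a single range block'' without specifying them or computing the form. Likewise your sufficiency step (``whence $\langle S(T)\xi ,\xi \rangle \geq 0$ for $\xi \in \mathcal{K}_{v}$'') is precisely the implication that needs proof, since (4.1.7) and (4.1.8) control only range-grouped sums of coefficients and not the individual off-diagonal entries $c_{u}$ for nonvertex $u$ --- the very obstacle your final paragraph identifies. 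Until those test vectors are written down and the claimed collapse is verified, what you have is a correct framework and a proof of the easier half of necessity, not a proof of the theorem.
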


\begin{proof}
\strut Let $T$ $=$ $\underset{w\in Supp(T)}{\sum }$ $t_{w}$ $L_{w}$ be a
given graph operator in $M_{G}.$ Then, by (4.1.1), the self-commutator $S(T)$
of $T$ is

\begin{center}
$S(T)$ $=$ $TT^{*}$ $-$ $TT^{*}$ $=$ $\underset{(w_{1},w_{2})}{\sum }$ $%
\overline{t_{w_{1}}}$ $t_{w_{2}}$ $L_{w_{1}^{-1}w_{2}}$ $-$ $\underset{%
(y_{1},y_{2})}{\sum }$ $t_{y_{1}}$ $\overline{t_{y_{2}}}$ $%
L_{y_{1}y_{2}^{-1}},$
\end{center}

identified with

\begin{center}
$\underset{(w_{1},w_{2})\in Supp(T)^{2},\,w_{1}^{-1}w_{2}\neq \emptyset }{%
\sum }$ $\overline{t_{w_{1}}}$ $t_{w_{2}}\left(
L_{w_{1}^{-1}w_{2}}-L_{w_{2}w_{1}^{-1}}\right) ,$
\end{center}

by (4.1.2). Then, for all $\xi $ $=$ $\underset{x\in \Bbb{G}}{\sum }$ $r_{x}$
$\xi _{x}$ $\in $ $\mathcal{H}_{G}$ in $H_{G},$ we can obtain the formula
(4.1.5), which states;

$\qquad <S(T)$ $\xi ,$ $\xi $ $>$

$\qquad \qquad =$ $\underset{(x,y)\in \Bbb{G}^{2}}{\sum }r_{x}\overline{r_{y}%
}$ $\left( \underset{(w_{1},w_{2})\in Supp(T)^{2},\,w_{1}^{-1}w_{2}\neq
\emptyset }{\sum }\overline{t_{w_{1}}}t_{w_{2}}\delta
_{r(w_{1}^{-1}w_{2}),\,s(x)}\delta _{w_{1}^{-1}w_{2}x,\,y}\right. $

$\qquad \qquad \qquad \qquad \qquad \qquad \quad $ $\left. -\underset{%
(y_{1},y_{2})\in Supp(T)^{2},\,y_{1}y_{2}^{-1}\neq \emptyset }{\sum }%
t_{y_{1}}\overline{t_{y_{2}}}\delta _{r(y_{1}y_{2}^{-1}),\,s(x)}\delta
_{y_{1}y_{2}^{-1}x,\,y}\right) ,$

satisfying (4.1.6).

($\Leftarrow $) Consider now that the terms

$\qquad \Delta _{xy}^{o}$ $\overset{def}{=}$ $\frac{1}{r_{x}\overline{r_{y}}}%
\Delta _{xy}$ $=$ $\left( \underset{(w_{1},w_{2})\in
Supp(T)^{2},\,w_{1}^{-1}w_{2}\neq \emptyset }{\sum }\overline{t_{w_{1}}}%
t_{w_{2}}\delta _{r(w_{1}^{-1}w_{2}),\,s(x)}\delta
_{w_{1}^{-1}w_{2}x,\,y}\right. $

$\qquad \qquad \qquad \qquad \qquad \qquad \quad $ $\left. -\underset{%
(y_{1},y_{2})\in Supp(T)^{2},\,y_{1}y_{2}^{-1}\neq \emptyset }{\sum }%
t_{y_{1}}\overline{t_{y_{2}}}\delta _{r(y_{1}y_{2}^{-1}),\,s(x)}\delta
_{y_{1}y_{2}^{-1}x,\,y}\right) $

in (4.1.5). Each $\Delta _{xy}^{o}$ can be re-formulated by

(4.1.9)

$\qquad \Delta _{xy}^{o}$ $=$ $\underset{v\in \mathcal{V}_{T}}{\sum }\left( 
\underset{(w_{1},w_{2})\in Supp(T)^{2},\,w_{1}^{-1}w_{2}\neq \emptyset
,\,r(w_{1}^{-1}w_{2})=v}{\sum }\overline{t_{w_{1}}}t_{w_{2}}\delta
_{v,\,s(x)}\delta _{w_{1}^{-1}w_{2}x,\,y}\right. $

$\qquad \qquad \qquad \qquad \qquad \qquad \quad $ $\left. -\underset{%
(y_{1},y_{2})\in Supp(T)^{2},\,y_{1}y_{2}^{-1}\neq \emptyset
,\,r(y_{1}y_{2}^{-1})=v}{\sum }t_{y_{1}}\overline{t_{y_{2}}}\delta
_{v,\,s(x)}\delta _{y_{1}y_{2}^{-1}x,\,y}\right) ,$

where

\begin{center}
$\mathcal{V}_{T}$ $\overset{def}{=}$ $\{r(w)$ $:$ $w$ $\in $ $Supp(T)\}$ $%
\cup $ $\{s(w)$ $:$ $w$ $\in $ $Supp(T)\},$
\end{center}

in $V(\widehat{G}).$ Let's denote the summand of $\Delta _{xy}^{o}$ by $%
\Delta _{xy}^{o}(v),$ for $v$ $\in $ $\mathcal{V}_{T}.$ i.e.,

(4.1.10)

\begin{center}
$\Delta _{xy}^{o}$ $=$ $\underset{v\in \mathcal{V}_{T}}{\sum }$ $\Delta
_{xy}^{o}$
\end{center}

Thus, if

(4.1.11)

\begin{center}
$\Delta _{xy}^{o}(v)$ $\geq $ $0,$ for all $v$ $\in $ $V(\widehat{G}),$
\end{center}

then we can make $<S(T)\xi ,$ $\xi $ $>$ be positive in $\Bbb{R},$ for an
``arbitrary'' $\xi $ $\in $ $\mathcal{H}_{G},$ and hence the operator $T$ is
hyponormal, by (4.1.6).

So, since the above vector $\xi $ is arbitrary in $\mathcal{H}_{G},$ we can
obtain that: if the set-inclusion (4.1.7) holds, and if the inequality
(4.1.8);

$\qquad \quad \left( \underset{(w_{1},w_{2})\in
Supp(T)^{2},\,w_{1}^{-1}w_{2}\neq \emptyset ,\,r(w_{1}^{-1}w_{2})=v}{\sum }%
\overline{t_{w_{1}}}t_{w_{2}}\right) $

$\qquad \quad \qquad \quad \qquad \quad \geq $ $\left( \underset{%
(y_{1},y_{2})\in Supp(T)^{2},\,y_{1}y_{2}^{-1}\neq \emptyset
,\,r(y_{1}y_{2}^{-1})=v}{\sum }\text{ }t_{y_{1}}\text{ }\overline{t_{y_{2}}}%
\right) ,$

holds in $\Bbb{R}$($\subset $ $\Bbb{C}$), for all $v$ $\in $ $\mathcal{V}%
_{T},$ then $T$ is hyponormal, since

\begin{center}
$<S(T)$ $\eta ,$ $\eta $ $>$ $\geq $ $0,$ for all $\eta $ $\in $ $\mathcal{H}%
_{G}$($\subseteq $ $H_{G}$).
\end{center}

Equivalently, if both (4.1.7) and (4.1.8) hold, then $T$ is hyponormal on $%
H_{G}.$

\strut

($\Rightarrow $) Conversely, let a given graph operator $T$ be hyponormal on 
$H_{G},$ equivalently, the self-commutator $S(T)$ is a positive operator on $%
H_{G}.$ And assume that $S(T)$ does not satisfy either (4.1.7) or (4.1.8).

Suppose first that the condition (4.1.7) does not hold. i.e., assume

(4.1.12)

\begin{center}
$\mathcal{R}_{T^{*}T}$ $=$ $\{r(w)$ $:$ $w$ $\in $ $\Pi _{T^{*}T}\}$ $%
\subset $ $\{r(w)$ $:$ $w$ $\in $ $\Pi _{TT^{*}}\}$ $=$ $\mathcal{R}%
_{TT^{*}}.$
\end{center}

This means that there exists an element $w_{0}$ $\in $ $Supp(T),$ such that

\begin{center}
$r(w_{0})$ $\in $ $\mathcal{R}_{T^{*}T}$ and $s(w_{0})$ $\in $ $\mathcal{R}%
_{TT^{*}},$
\end{center}

satisfying

\begin{center}
$r(w_{0})$ $\neq $ $s(w_{0})$ in $V(\widehat{G}),$
\end{center}

with

\begin{center}
$s(w_{0})$ $\in $ $\mathcal{R}_{TT^{*}}$ $\setminus $ $\mathcal{R}_{T^{*}T}$
($\neq $ $\varnothing $).
\end{center}

Notice here that

$r(w_{1}w_{2})$ $=$ $r(w_{2}),$ and $s(w_{1}w_{2})$ $=$ $s(w_{1}),$

for all $w_{1},$ $w_{2}$ $\in $ $\Bbb{G}.$ So, our condition (4.1.12)
guarantees the existence of such an element $w_{0}$ in $Supp(T).$ Then we
can obtain the summand

\begin{center}
$\left| t_{w_{0}}\right| ^{2}\left( L_{r(w_{0})}-L_{s(w_{0})}\right) $
\end{center}

of $S(T),$ by (4.1.2). Again, by (4.1.12), we have the summand

\begin{center}
$-\left| t_{w_{0}}\right| ^{2}L_{s(w_{0})}$
\end{center}

of $S(T).$ Thus, if we take a vector $\xi _{w_{0}}$ $\in $ $\overline{t_{w}}$
$\in $ $\mathcal{B}_{H_{G}}$ $\subset $ $\mathcal{H}_{G}$ in $H_{G},$ then

$\qquad <S(T)\xi _{w_{0}},$ $\xi _{w_{0}}$ $>$ $=$ $-\left| t_{w_{0}}\right|
^{2}<L_{s(w_{0})}\xi _{w_{0}},$ $\xi _{w_{0}}$ $>$

$\qquad \qquad \qquad =$ $-\left| t_{w_{0}}\right| ^{2}$ $<\xi _{w_{0}},$ $%
\xi _{w_{0}}$ $>$

by (4.1.4)

$\qquad \qquad \qquad =$ $-\left| t_{w_{0}}\right| ^{2}$ $\left\| \xi
_{w_{0}}\right\| ^{2}$ $=$ $-\left| t_{w_{0}}\right| ^{2}$ $<$ $0,$

where

\begin{center}
$\left\| \eta \right\| $ $\overset{def}{=}$ $\sqrt{<\eta ,\text{ }\eta >},$
for all $\eta $ $\in $ $H_{G},$
\end{center}

is the Hilbert space norm on $H_{G}.$ This shows that there exists a vector $%
\xi ^{\prime }$ $\in $ $H_{G},$ such that $<$ $S(T)$ $\xi ^{\prime },$ $\xi
^{\prime }$ $>$ becomes negative in $\Bbb{R}.$ This contradicts our
assumption that $T$ is hyponormal.

Therefore, if $T$ is hyponormal, then the condition (4.1.7) must hold.

Assume now that $T$ is hyponormal, and the inequality (4.1.8) does not hold.
We will assume that (4.1.7) holds true for $T.$ Since (4.1.8) does not hold,
there exists at least one vertex $v_{0}$ such that

(4.1.13)

\begin{center}
$\Delta _{xy}^{o}(v_{0})$ $<$ $0,$
\end{center}

where $\Delta _{xy}^{o}$ and $\Delta _{xy}^{o}(v)$'s are defined in (4.1.10)
and (4.1.11), respectively.

Then we can take a vector

$\qquad \eta $ $=$ $\underset{(w_{1},w_{2})\in
Supp(T)^{2},\,w_{1}^{-1}w_{2}\neq \emptyset ,\,r(w_{1}^{-1}w_{2})=v_{0}}{%
\sum }\left( \xi _{x}+\xi _{w_{1}^{-1}w_{2}x}\right) $

$\qquad \qquad \qquad +$ $\underset{(y_{1},y_{2})\in
Supp(T)^{2},\,y_{1}y_{2}^{-1}\neq \emptyset ,\,r(y_{1}y_{2}^{-1})=v_{0}}{%
\sum }$ $\left( \xi _{y}+\xi _{y_{1}y_{2}^{-1}y}\right) $

in $\mathcal{H}_{G}.$ Then, by (4.1.3)

\begin{center}
$<S(T)\eta ,$ $\eta $ $>$ $=$ $\Delta _{xy}^{o}(v_{0})$ $<$ $0.$
\end{center}

Therefore, it breaks the hyponormality of $T,$ which contradicts our
assumption that $T$ is hyponormal. Thus, the condition (4.1.8) must hold
under the hyponormality of $T.$

As we have seen above, we can conclude that a graph operator $T$ is
hyponormal, if and only if the both conditions (4.1.7), and (4.1.8) hold.
\end{proof}

\strut The above theorem characterize the hyponormality of graph operators
in terms of the admissibility on $\Bbb{G},$ and the analytic data of
coefficients, just like Sections 3.2, and 3.3.

From below, denote

\begin{center}
$r\left( \Pi _{T^{*}T}\right) $ and $r(\Pi _{TT^{*}})$
\end{center}

by

\begin{center}
$\mathcal{R}_{T^{*}T}$ and $\mathcal{R}_{TT^{*}},$
\end{center}

respectively.

The above theorem provides not only the characterization of hyponormal graph
operators but also the very useful process for checking
``non-hyponormality.''

\begin{corollary}
Let $T$ $=$ $\underset{w\in Supp(T)}{\sum }$ $t_{w}L_{w}$ be a graph
operator in $M_{G}.$

(1) If $\mathcal{R}_{T^{*}T}$ $\nsupseteq $ $\mathcal{R}_{TT^{*}},$ then $T$
is not hyponormal.

(2) If there exists a vertex $v_{0}$ $\in $ $\mathcal{V}_{T},$ such that

$\qquad \left( \underset{(w_{1},w_{2})\in Supp(T)^{2},\,w_{1}^{-1}w_{2}\neq
\emptyset ,\,r(w_{1}^{-1}w_{2})=v_{0}}{\sum }\overline{t_{w_{1}}}\text{ }%
t_{w_{2}}\right) $

$\qquad \qquad \qquad \qquad \ngeqq $ $\left( \underset{(y_{1},y_{2})\in
Supp(T)^{2},\,y_{1}y_{2}^{-1}\neq \emptyset ,\,r(y_{1}y_{2}^{-1})=v_{0}}{%
\sum }\text{ }t_{y_{1}}\overline{t_{y_{2}}}\right) ,$

then $T$ is not hyponormal. $\square $
\end{corollary}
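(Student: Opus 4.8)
The plan is to read off both statements as the contrapositives of the hyponormality characterization established in the preceding theorem. Recall that that theorem asserts that $T$ is hyponormal if and only if the set-inclusion (4.1.7) and the coefficient inequality (4.1.8) both hold. Since the corollary negates the conjunction of these two conditions, one part for each, the work reduces to matching the hypotheses of (1) and (2) to the failure of (4.1.7) and (4.1.8) respectively, and then invoking the theorem.

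For part (1), I would first note that, under the notation $\mathcal{R}_{T^{*}T}$ $=$ $r(\Pi _{T^{*}T})$ and $\mathcal{R}_{TT^{*}}$ $=$ $r(\Pi _{TT^{*}})$ introduced just above, the condition (4.1.7) reads exactly $\mathcal{R}_{T^{*}T}$ $\supseteq$ $\mathcal{R}_{TT^{*}}$. Hence the hypothesis $\mathcal{R}_{T^{*}T}$ $\nsupseteq$ $\mathcal{R}_{TT^{*}}$ is precisely the failure of (4.1.7), so by the theorem $T$ is not hyponormal. For a self-contained argument one can instead reproduce the relevant half of the theorem's forward direction: the failure of (4.1.7) produces an element $w_{0}$ $\in$ $Supp(T)$ with $s(w_{0})$ $\in$ $\mathcal{R}_{TT^{*}}$ $\setminus$ $\mathcal{R}_{T^{*}T}$, and testing the self-commutator on $\xi _{w_{0}}$ yields $<S(T)\xi _{w_{0}},$ $\xi _{w_{0}}>$ $=$ $-\left| t_{w_{0}}\right| ^{2}$ $<$ $0$, contradicting positivity of $S(T)$.

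For part (2), the displayed inequality that is assumed to fail at $v_{0}$ is exactly the vertex-$v_{0}$ instance of (4.1.8). Its failure means $\Delta _{xy}^{o}(v_{0})$ $<$ $0$ for the corresponding indices, and, exactly as in the forward direction of the theorem, the test vector $\eta$ supported on the pairs contributing to $v_{0}$ gives $<S(T)\eta ,$ $\eta >$ $=$ $\Delta _{xy}^{o}(v_{0})$ $<$ $0$, so $S(T)$ is not positive and $T$ is not hyponormal.

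The only point requiring a little care is that the theorem is stated as a biconditional with the conjunction of (4.1.7) and (4.1.8) on one side, whereas the corollary needs each condition to obstruct hyponormality on its own. This is legitimate because the forward direction of the theorem's proof treats the two conditions separately: it shows that hyponormality forces (4.1.7), and, assuming (4.1.7), that it then forces (4.1.8). Consequently the negation of either condition alone already defeats hyponormality, which is precisely what the two parts of the corollary record. I expect no substantive obstacle here; the entire content is carried by the preceding theorem, and the corollary merely repackages its forward implication as two independent non-hyponormality tests.
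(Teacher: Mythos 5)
Your proposal is correct and takes essentially the same route as the paper: the paper states this corollary without a separate proof (the proof box is attached directly to the statement), treating both parts as the immediate contrapositive of the forward direction of the preceding hyponormality theorem, which is exactly your argument. Your supplementary self-contained computations (testing $S(T)$ on $\xi _{w_{0}}$ for part (1) and on the vector $\eta $ for part (2)) simply reproduce the paper's own proof of that theorem, so nothing further is needed.
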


In the rest of this section, we will consider several fundamental examples.
These examples will give the concrete understanding for the above theorem;
the characterization of hyponormal graph operators.

\begin{example}
Suppose a graph $G$ contains its subgraph,

\strut

\begin{center}
$_{v_{1}}\bullet \overset{e_{1}}{\underset{e_{2}}{\rightrightarrows }}%
\bullet _{v_{2}},$
\end{center}

\strut

and let $T$ $=$ $t_{e_{1}}$ $L_{e_{1}}$ $+$ $t_{e_{2}}$ $L_{e_{2}}.$ Then
this graph operator $T$ is not hyponormal, since

\begin{center}
$\mathcal{R}_{T^{*}T}$ $=$ $\{r(e_{1}^{-1}e_{1}),$ $r(e_{1}^{-1}e_{2}),$ $%
r(e_{2}^{-1}e_{1}),$ $r(e_{2}^{-1}e_{2})\}$ $=$ $\{v_{2}\},$
\end{center}

and

\begin{center}
$\mathcal{R}_{TT^{*}}$ $=$ $\{r(e_{1}e_{1}^{-1}),$ $r(e_{1}e_{2}^{-1}),$ $%
r(e_{2}e_{1}^{-1}),$ $r(e_{2}e_{2}^{-1})\}$ $=$ $\{v_{1}\}$.
\end{center}

in $V(\widehat{G}).$ So,

\begin{center}
$\mathcal{R}_{T^{*}T}$ $\cap $ $\mathcal{R}_{TT^{*}}$ $=$ $\varnothing ,$
\end{center}

and hence $T$ does not satisfy the condition (4.1.7), stating

\begin{center}
$\mathcal{R}_{T^{*}T}$ $\supseteq $ $\mathcal{R}_{TT^{*}}.$
\end{center}

Therefore, this operator $T$ is not hyponormal.
\end{example}

\begin{example}
\strut Suppose a graph $G$ contains its subgraph,

\strut

\begin{center}
$_{v_{1}}\bullet \overset{e_{1}}{\underset{e_{2}}{\rightrightarrows }}%
\bullet _{v_{2}},$
\end{center}

\strut

and let $T_{1}$ $=$ $t_{e_{1}}L_{e_{1}}$ $+$ $t_{e_{1}^{-1}}$ $%
L_{e_{1}^{-1}} $ $+$ $t_{e_{2}}L_{e_{2}}.$ Then we can have that

\begin{center}
$\mathcal{R}_{T_{1}^{*}T_{1}}$ $=$ $\left\{ 
\begin{array}{c}
r(e_{1}^{-1}e_{1}),\text{ }r(e_{1}^{-1}e_{1}^{-1}),\text{ }%
r(e_{1}^{-1}e_{2}), \\ 
r(e_{1}e_{1}),\text{ }r(e_{1}e_{1}^{-1}),\text{ }r(e_{1}e_{2}), \\ 
r(e_{2}^{-1}e_{1}),\text{ }r(e_{2}^{-1}e_{1}^{-1}),\text{ }r(e_{2}^{-1}e_{2})
\end{array}
\right\} $ $=$ $\{v_{1},$ $v_{2}\},$
\end{center}

and

\begin{center}
$\mathcal{R}_{T_{1}T_{1}^{*}}$ $=$ $\left\{ 
\begin{array}{c}
r(e_{1}e_{1}^{-1}),\text{ }r(e_{1}e_{1}),\text{ }r(e_{1}e_{2}^{-1}), \\ 
r(e_{1}^{-1}e_{1}^{-1}),\text{ }r(e_{1}^{-1}e_{1}),\text{ }%
r(e_{1}^{-1}e_{2}^{-1}), \\ 
r(e_{2}e_{1}^{-1}),\text{ }r(e_{2}e_{1}),\text{ }r(e_{2}e_{2}^{-1})
\end{array}
\right\} $ $=$ $\{v_{1},$ $v_{2}\},$
\end{center}

in $V(\widehat{G}).$ Thus, $T_{1}$ satisfies the condition (4.1.7);

\begin{center}
$\mathcal{R}_{T_{1}^{*}T_{1}}$ $=$ $\mathcal{V}_{T_{1}}$ $=$ $\mathcal{R}%
_{T_{1}T_{1}^{*}},$ and hence $\mathcal{R}_{T_{1}^{*}T_{1}}$ $\supseteq $ $%
\mathcal{R}_{T_{1}T_{1}^{*}}.$
\end{center}

So, $T_{1}$ is hyponormal, if and only if (4.1.8) holds. So, $T_{1}$ is
hyponormal, if and only if, for $v_{1}$ $\in $ $\mathcal{V}_{T},$

(4.1.14)

$\qquad \qquad \left| t_{e_{1}}\right| ^{2}$ $-$ $\left( \left|
t_{e_{1}}\right| ^{2}+t_{e_{1}}\overline{t_{e_{2}}}+t_{e_{2}}\overline{%
t_{e_{1}}}+\left| t_{e_{2}}\right| ^{2}\right) $ $\geq $ $0$

\begin{center}
$\Longleftrightarrow $ $\left| t_{e_{2}}\right| ^{2}$ $\leq $ $-$ $t_{e_{1}}%
\overline{t_{e_{2}}}$ $-$ $t_{e_{2}}\overline{t_{e_{1}}},$
\end{center}

and, for $v_{2}$ $\in $ $\mathcal{V}_{T},$

(4.1.15)

$\qquad \qquad \left( \left| t_{e_{1}}\right| ^{2}+\overline{t_{e_{1}}}%
t_{e_{2}}+\overline{t_{e_{2}}}t_{e_{1}}+\left| t_{e_{2}}\right| ^{2}\right) $
$-$ $\left| t_{e_{1}}\right| ^{2}$ $\geq $ $0$

\begin{center}
$\Longleftrightarrow $ $\left| t_{e_{2}}\right| ^{2}$ $\geq $ $-\overline{%
t_{e_{1}}}$ $t_{e_{2}}$ $-$ $\overline{t_{e_{2}}}$ $t_{e_{1}}.$
\end{center}

If we combine (4.1.14) and (4.1.15), we can obtain that the given operator $%
T_{1}$ is hyponormal, if and only if

(4.1.16)

\begin{center}
$\left| t_{e_{2}}\right| ^{2}$ $=$ $-$ $\overline{t_{e_{1}}}$ $t_{e_{2}}$ $-$
$\overline{t_{e_{2}}}$ $t_{e_{1}}.$
\end{center}

In fact, the readers can easily check that the hyponormality condition
(4.1.16) guarantees the ``normality'' of $T_{1},$ too. i.e., $T_{1}$ is
normal, if and only if (4.1.16) holds (See Section 4.2 below).

\strut

Now, let $T_{2}$ $=$ $t_{e_{1}}L_{e_{1}}$ $+$ $t_{e_{1}^{-1}}$ $%
L_{e_{1}^{-1}}.$ Then we have

$\mathcal{R}_{T_{2}^{*}T_{2}}$ $=$ $\left\{ 
\begin{array}{c}
r(e_{1}^{-1}e_{1}),\text{ }r(e_{1}^{-1}e_{1}^{-1}), \\ 
r(e_{1}e_{1}),\text{ }r(e_{1}e_{1}^{-1})
\end{array}
\right\} $ $=$ $\{v_{1},$ $v_{2}\},$

and

$\mathcal{R}_{T_{2}T_{2}^{*}}$ $=$ $\left\{ 
\begin{array}{c}
r(e_{1}e_{1}^{-1}),\text{ }r(e_{1}e_{1}), \\ 
r(e_{1}^{-1}e_{1}^{-1}),\text{ }r(e_{1}^{-1}e_{1})
\end{array}
\right\} $ $=$ $\{v_{1},$ $v_{2}\},$

and hence the operator $T_{2}$ satisfies (4.1.7). So, $T_{2}$ is hyponormal,
if and only if

(4.1.17)

\begin{center}
$\left| t_{e_{1}}\right| ^{2}$ $\geq $ $\left| t_{e_{1}^{-1}}\right|
^{2}\qquad $(for $v_{1}$),
\end{center}

and

\begin{center}
$\left| t_{e_{1}^{-1}}\right| ^{2}$ $\geq $ $\left| t_{e_{1}}\right| ^{2}$%
\qquad (for $v_{2}$).
\end{center}

Therefore, by (4.1.17), we can conclude that $T_{2}$ is hyponormal, if and
only if

\begin{center}
$\left| t_{e_{1}}\right| ^{2}$ $=$ $\left| t_{e_{1}^{-1}}\right| ^{2}$ in $%
\Bbb{C}.$
\end{center}

This example also shows that the hyponormality of $T_{2}$ is equivalent to
the normality of $T_{2}$ (See Section 4.2 below).
\end{example}

\begin{example}
\strut Let a graph $G$ contains the following subgraph,

\strut

\begin{center}
$_{v_{1}}\bullet \overset{e_{1}}{\longrightarrow }\underset{v_{2}}{\bullet }%
\overset{e_{2}}{\longrightarrow }\bullet _{v_{3}},$
\end{center}

\strut

and let $T$ $=$ $t_{1}L_{e_{1}}$ $+$ $t_{2}L_{e_{2}}$ $+$ $%
t_{v_{3}}L_{v_{3}}.$ Then we can have that

\begin{center}
$\mathcal{R}_{T^{*}T}$ $=$ $\left\{ 
\begin{array}{c}
r(e_{1}^{-1}e_{1}),\text{ }r(e_{1}^{-1}e_{2}),\text{ }r(e_{1}^{-1}v_{3}), \\ 
r(e_{2}^{-1}e_{1}),\text{ }r(e_{2}^{-1}e_{2}),\text{ }r(e_{2}^{-1}v_{3}), \\ 
r(v_{3}e_{1}),\text{ }r(v_{3}e_{2}),\text{ }r(v_{3}v_{3})
\end{array}
\right\} $ $=$ $\{v_{1},$ $v_{3}\},$
\end{center}

and

\begin{center}
$\mathcal{R}_{TT^{*}}$ $=$ $\left\{ 
\begin{array}{c}
r(e_{1}e_{1}^{-1}),\text{ }r(e_{1}e_{2}^{-1}),\text{ }r(e_{1}v_{3}), \\ 
r(e_{2}e_{1}^{-1}),\text{ }r(e_{2}e_{2}^{-1}),\text{ }r(e_{2}v_{3}), \\ 
r(v_{3}e_{1}^{-1}),\text{ }r(v_{3}e_{2}^{-1}),\text{ }r(v_{3}v_{3})
\end{array}
\right\} $ $=$ $\{v_{1},$ $v_{2},$ $v_{3}\}.$
\end{center}

So, the operator $T$ does not satisfy the condition (4.1.7) for the
hyponormality of $T$, and hence $T$ is not hyponormal.
\end{example}

\begin{example}
\strut Assume that a graph $G$ contains a subgraph,

\strut

\begin{center}
$^{v_{1}}\bullet \overset{e_{1}}{\longrightarrow }\underset{\underset{e_{2}}{%
\circlearrowleft }}{\bullet }^{v_{2}},$
\end{center}

\strut

and let $T$ $=$ $t_{v_{1}}L_{v_{1}}$ $+$ $t_{e_{1}}$ $L_{e_{1}}$ $+$ $%
t_{e_{2}^{-1}}L_{e_{2}^{-1}}.$ Then we can have that

\begin{center}
$\mathcal{R}_{T^{*}T}$ $=$ $\left\{ 
\begin{array}{c}
r(v_{1}v_{1}),\text{ }r(v_{1}e_{1}),\text{ }r(v_{1}e_{2}^{-1}), \\ 
r(e_{1}^{-1}v_{1}),\text{ }r(e_{1}^{-1}e_{1}),\text{ }r(e_{1}^{-1}e_{2}), \\ 
r(e_{2}v_{1}),\text{ }r(e_{2}e_{1}),\text{ }r(e_{2}e_{2}^{-1})
\end{array}
\right\} $ $=$ $\{v_{1},$ $v_{2}\},$
\end{center}

and

\begin{center}
$\mathcal{R}_{TT^{*}}$ $=$ $\left\{ 
\begin{array}{c}
r(v_{1}v_{1}),\text{ }r(v_{1}e_{1}^{-1}),\text{ }r(v_{1}e_{2}), \\ 
r(e_{1}v_{1}),\text{ }r(e_{1}e_{1}^{-1}),\text{ }r(e_{1}e_{2}), \\ 
r(e_{2}^{-1}v_{1}),\text{ }r(e_{2}^{-1}e_{1}^{-1}),\text{ }r(e_{2}^{-1}e_{2})
\end{array}
\right\} $ $=$ $\{v_{1},$ $v_{2}\}.$
\end{center}

So, $\mathcal{R}_{T^{*}T}$ $=$ $\mathcal{R}_{TT^{*}},$ and hence $T$
satisfies the condition (4.1.7). So, to make $T$ be hyponormal, the
coefficients of $T$ must satisfy the condition (4.1.8). Thus we can conclude
that $T$ is hyponormal, if and only if

(4.1.18)

\begin{center}
$\left( \left| t_{v_{1}}\right| ^{2}+\overline{t_{e_{1}}}t_{v_{1}}\right) $ $%
\geq $ $\left( \left| t_{v_{1}}\right| ^{2}+\left| t_{e_{1}}\right|
^{2}+t_{e_{2}^{-1}}\overline{t_{e_{1}}}\right) \quad $(for $v_{1}$)
\end{center}

and

\begin{center}
$\left( \overline{t_{v_{1}}}\text{ }t_{e_{1}}+\left| t_{e_{1}}\right|
^{2}+\left| t_{e_{2}^{-1}}\right| ^{2}\right) $ $\geq $ $\left( t_{e_{1}}%
\overline{t_{e_{2}^{-1}}}+\left| t_{e_{2}^{-1}}\right| ^{2}\right) $\quad
(for $v_{2}$).
\end{center}

The above condition (4.1.18) can be rewritten by

(4.1.19)

\begin{center}
$\overline{t_{e_{1}}}$ $t_{v_{1}}$ $-$ $t_{e_{2}^{-1}}\overline{t_{e_{1}}}$ $%
\geq $ $\left| t_{e_{1}}\right| ^{2},$
\end{center}

and

\begin{center}
$\left| t_{e_{1}}\right| ^{2}$ $\geq $ $t_{e_{1}}\overline{t_{e_{2}^{-1}}}$ $%
-$ $\overline{t_{v_{1}}}$ $t_{e_{1}},$
\end{center}

respectively. Therefore, the given graph operator $T$ is hyponormal, if and
only if

\begin{center}
$\overline{t_{e_{1}}}$ $t_{v_{1}}$ $-$ $t_{e_{2}^{-1}}\overline{t_{e_{1}}}$ $%
\geq $ $\left| t_{e_{1}}\right| ^{2}$ $\geq $ $t_{e_{1}}\overline{%
t_{e_{2}^{-1}}}$ $-$ $\overline{t_{v_{1}}}$ $t_{e_{1}},$

if and only if

$\left| t_{e_{1}}\right| ^{2}$ $\leq $ $\left| \overline{t_{e_{1}}}%
t_{v_{1}}-t_{e_{2}^{-1}}\overline{t_{e_{1}}}\right| $.
\end{center}
\end{example}

\begin{example}
\strut Suppose a graph $G$ contains its subgraph,

\strut \strut

\begin{center}
$^{v_{1}}\underset{\underset{e_{1}}{\circlearrowright }}{\bullet }\overset{%
e_{2}}{\underset{e_{3}}{\rightrightarrows }}\bullet ^{v_{2}},$

\strut
\end{center}

and let $T$ $=$ $t_{e_{1}}L_{e_{1}}$ $+$ $t_{e_{2}}$ $L_{e_{2}}$ $+$ $%
t_{e_{3}}$ $L_{e_{3}}.$ Then we can have that

\begin{center}
$\mathcal{R}_{T^{*}T}$ $=$ $\left\{ 
\begin{array}{c}
r(e_{1}^{-1}e_{1}),\text{ }r(e_{1}^{-1}e_{2}),\text{ }r(e_{1}^{-1}e_{3}), \\ 
r(e_{2}^{-1}e_{1}),\text{ }r(e_{2}^{-1}e_{2}),\text{ }r(e_{2}^{-1}e_{3}), \\ 
r(e_{3}^{-1}e_{1}),\text{ }r(e_{3}^{-1}e_{2}),\text{ }r(e_{3}^{-1}e_{3})
\end{array}
\right\} $ $=$ $\{v_{1},$ $v_{2}\},$
\end{center}

and

\begin{center}
$\mathcal{R}_{TT^{*}}$ $=$ $\left\{ 
\begin{array}{c}
r(e_{1}e_{1}^{-1}),\text{ }r(e_{1}e_{2}^{-1}),\text{ }r(e_{1}e_{3}^{-1}), \\ 
r(e_{2}e_{1}^{-1}),\text{ }r(e_{2}e_{2}^{-1}),\text{ }r(e_{2}e_{3}^{-1}), \\ 
r(e_{3}e_{1}^{-1}),\text{ }r(e_{3}e_{2}^{-1}),\text{ }r(e_{3}e_{3}^{-1})
\end{array}
\right\} $ $=$ $\{v_{1}\}.$
\end{center}

So, the operator $T$ satisfies the condition (4.1.7) i.e.,

\begin{center}
$\mathcal{R}_{T^{*}T}$ $\supset $ $\mathcal{R}_{TT^{*}}.$
\end{center}

Thus, we can obtain that $T$ is hyponormal, if and only if

(4.1.21)

\begin{center}
$\left( \left| t_{e_{1}}\right| ^{2}+\overline{t_{e_{2}}}\text{ }t_{e_{1}}+%
\overline{t_{e_{3}}}\text{ }t_{e_{1}}\right) $ $\geq $ $\left( \left|
t_{e_{1}}\right| ^{2}+\left| t_{e_{2}}\right| ^{2}+t_{e_{2}}\overline{%
t_{e_{3}}}+t_{e_{3}}\overline{t_{e_{2}}}+\left| t_{e_{3}}\right| ^{2}\right) 
$
\end{center}

(for $v_{1}$), and

\begin{center}
$\left( \overline{t_{e_{1}}}t_{e_{2}}+\overline{t_{e_{1}}}t_{e_{3}}+\left|
t_{e_{2}}\right| ^{2}+\overline{t_{e_{2}}}\text{ }t_{e_{3}}+\overline{%
t_{e_{3}}}t_{e_{2}}+\left| t_{e_{3}}\right| ^{2}\right) $ $\geq $ $0$
\end{center}

(for $v_{2}$).
\end{example}

\subsection{Normality}

In this section, we will consider the normality of graph operators. In
Section 4.1, we studied the hyponormality of graph operators in terms of
combinatorial information of given graphs, and certain analytic data of
coefficients of operators. Throughout this section, we will use the same
notations used in Section 4.1.

Thanks to the hyponormality characterization ((4.1.7) and (4.1.8)) of graph
operators, we can obtain the following normality characterization of graph
operators.

\begin{theorem}
Let $T$ $=$ $\underset{w\in Supp(T)}{\sum }$ $t_{w}$ $L_{w}$ be a graph
operator in the graph von Neumann algebra $M_{G}$ of a connected graph $G.$
Then $T$ is normal, if and only if

(4.2.1)

\begin{center}
$\mathcal{R}_{T^{*}T}$ $=$ $\mathcal{R}_{TT^{*}},$
\end{center}

and

(4.2.2)

$\qquad \qquad \left( \underset{(w_{1},w_{2})\in
Supp(T)^{2},\,w_{1}^{-1}w_{2}\neq \emptyset ,\,r(w_{1}^{-1}w_{2})=v}{\sum }%
\text{ }\overline{t_{w_{1}}}\text{ }t_{w_{2}}\right) $

\strut

$\qquad \qquad \qquad \qquad \qquad \geq $ $\left( \underset{%
(y_{1},y_{2})\in Supp(T)^{2},\,y_{1}y_{2}^{-1}\neq \emptyset
,\,r(y_{1}y_{2}^{-1})=v}{\sum }\text{ }t_{y_{1}}\text{ }\overline{t_{y_{2}}}%
\right) .$
\end{theorem}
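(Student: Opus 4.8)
The strategy is to express normality as the vanishing of the self-commutator $S(T)=T^{*}T-TT^{*}$ and to show that, for graph operators, hyponormality already forces normality; the whole theorem then follows from the hyponormality characterization (4.1.7)--(4.1.8) together with one positivity fact: a positive operator whose quadratic form vanishes on every vector of the canonical orthonormal basis $\{\xi_x:x\in\Bbb{G}\setminus\{\emptyset\}\}$ must be the zero operator. The bridge between the two conditions is the ``diagonal'' of $S(T)$, i.e. the coefficients of the vertex operators $L_v$.

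The key preliminary computation is to identify those vertex coefficients. Since $w_1^{-1}w_2$ (respectively $y_1y_2^{-1}$) collapses to a vertex only when $w_1=w_2$ (respectively $y_1=y_2$), the coefficient of $L_v$ in $T^{*}T$ is $\alpha_v=\sum_{w\in Supp(T),\,r(w)=v}|t_w|^2$ and the coefficient of $L_v$ in $TT^{*}$ is $\beta_v=\sum_{w\in Supp(T),\,s(w)=v}|t_w|^2$. In particular $\alpha_v>0$ exactly when $v\in\mathcal{R}_{T^{*}T}$ and $\beta_v>0$ exactly when $v\in\mathcal{R}_{TT^{*}}$, so the identity $\alpha_v=\beta_v$ for all $v$ is equivalent to (4.2.1). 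For the forward implication, if $T$ is normal then $S(T)=0$, whence the coefficient $\alpha_v-\beta_v$ of each $L_v$ vanishes; this gives $\alpha_v=\beta_v$ and hence (4.2.1), while matching the coefficient of every $L_g$ in $T^{*}T$ and $TT^{*}$ and regrouping by the common range vertex $v=r(g)$ yields equality of the two sides of (4.2.2), in particular the inequality (4.2.2).

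For the converse I assume (4.2.1) and (4.2.2). Condition (4.2.1) contains the inclusion (4.1.7) and (4.2.2) is exactly (4.1.8), so the hyponormality characterization above gives $S(T)\geq 0$. I then evaluate the quadratic form on a basis vector: by the formula (4.1.4) a genuine reduced path $g$ contributes $\langle L_g\xi_x,\xi_x\rangle=0$, so only the vertex terms survive and $\langle S(T)\xi_x,\xi_x\rangle=\alpha_{s(x)}-\beta_{s(x)}$. Testing on $\xi_v$ (each vertex satisfies $s(v)=v$), the positivity of $S(T)$ gives $\alpha_v\geq\beta_v$ for every $v$. The crucial global identity is that, $Supp(T)$ being finite, $\sum_v\alpha_v=\sum_{w\in Supp(T)}|t_w|^2=\sum_v\beta_v$; combined with $\alpha_v\geq\beta_v$ this forces $\alpha_v=\beta_v$ for all $v$, so $\langle S(T)\xi_x,\xi_x\rangle=0$ for every basis vector. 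Finally, writing $S(T)=S(T)^{1/2}S(T)^{1/2}$ gives $\|S(T)^{1/2}\xi_x\|^2=0$, hence $S(T)\xi_x=0$ for all $x$ and $S(T)=0$, i.e. $T$ is normal.

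The main obstacle I expect is not the positivity argument but the bookkeeping that isolates the vertex (diagonal) part of $S(T)$: one must verify carefully that a product $w_1^{-1}w_2$ reduces to a vertex precisely when $w_1=w_2$, so that the diagonal coefficients are the cancellation-free sums $\alpha_v,\beta_v$, and that $\sum_v\alpha_v=\sum_v\beta_v$. Everything else reduces to the identities (4.1.1)--(4.1.6) and an appeal to the hyponormality theorem, exactly as anticipated in the remark following (4.1.16) that hyponormality of a graph operator already entails its normality.
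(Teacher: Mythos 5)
Your proof is correct, but it takes a genuinely different route from the paper's. The paper's own proof is a four-line sketch: it observes that $T$ is normal if and only if both $T$ and $T^{*}$ are hyponormal, equivalently $S(T)=0$, equivalently $\langle S(T)\xi ,\xi \rangle =0$ for all $\xi$, and then appeals to ``the little modification'' of the proof of the hyponormality theorem to reach (4.2.1) and (4.2.2). Applied literally, that reduction produces \emph{equality} conditions (the inequality (4.1.8) for $T$ together with the reversed inequality coming from hyponormality of $T^{*}$), so the paper never explains why the one-sided inequality (4.2.2) is already enough; your argument supplies exactly this missing rigidity. You use only the hyponormality of $T$ itself, obtained from (4.2.1) $\Rightarrow$ (4.1.7) and from the fact that (4.2.2) coincides with (4.1.8), and then force $S(T)=0$ by a finite-trace/diagonal argument: the coefficient of $L_{v}$ in $T^{*}T$ is $\alpha _{v}=\sum_{w\in Supp(T),\,r(w)=v}|t_{w}|^{2}$ and in $TT^{*}$ is $\beta _{v}=\sum_{w\in Supp(T),\,s(w)=v}|t_{w}|^{2}$ (your verification that $w_{1}^{-1}w_{2}$ reduces to a vertex precisely when $w_{1}=w_{2}$ is valid groupoid algebra); positivity of $S(T)$ tested on the vectors $\xi _{v}$ gives $\alpha _{v}\geq \beta _{v}$, the identity $\sum_{v}\alpha _{v}=\sum_{w\in Supp(T)}|t_{w}|^{2}=\sum_{v}\beta _{v}$ forces $\alpha _{v}=\beta _{v}$ for all $v$, hence every diagonal entry $\langle S(T)\xi _{x},\xi _{x}\rangle =\alpha _{s(x)}-\beta _{s(x)}$ vanishes, and a positive operator with vanishing diagonal relative to an orthonormal basis is zero. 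This buys rigor where the paper waves its hands, and it isolates the structural fact, anticipated in the paper only through its examples (the remark after (4.1.16)), that hyponormal graph operators are automatically normal. One small caveat: your parenthetical claim that ``$\alpha _{v}=\beta _{v}$ for all $v$ is equivalent to (4.2.1)'' overstates matters --- equality of the coefficient families implies $\mathcal{R}_{T^{*}T}=\mathcal{R}_{TT^{*}}$ but not conversely --- yet this is harmless, since your forward direction uses only the true implication, and in your converse $\alpha _{v}=\beta _{v}$ is derived from positivity plus the trace identity rather than from (4.2.1).
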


\begin{proof}
By definition, a graph operator $T$ is normal on the graph Hilbert space $%
H_{G},$ if and only if $T^{*}T$ $=$ $TT^{*}$ on $H_{G}.$ In other words, $T$
is normal, if and only if both $T$ and $T^{*}$ are hyponormal. Thus, $T$ is
normal, if and only if the self-commutator $S(T)$ is identical to the zero
element $0_{M_{G}}$ (which is identified with the zero operator $0_{H_{G}}$
on $H_{G}$), if and only if

\begin{center}
$<S(T)\xi ,$ $\xi >$ $=$ $0,$ for all $\xi $ $\in $ $H_{G}.$
\end{center}

Therefore, by the little modification of the proof of Theorem 4.5, we can
conclude that $T$ is normal, if and only if the combinatorial condition
(4.2.1) and the analytic condition (4.2.2) hold.\strut
\end{proof}

\section{Operators in Free Group Factors}

In this section, we consider applications of operator-theoretic properties
of graph operators. We will characterize the self-adjointness, the
hyponormality, the normality, and the unitary property of finitely supported
operators in the free group factor $L(F_{N}),$ generated by the free group $%
F_{N}$ with $N$-generators, for $N$ $\in $ $\Bbb{N}.$

In operator algebra, the study of free group factors $L(F_{N})$ is very
important (e.g., See [11]). Also, the study of elements of $L(F_{N})$ is
interesting, since they are (possibly, the infinite or the limit of) linear
combinations of unitary operators (e.g., See [3], [4], [6], and [7]). The
following theorem provides the key motivation of our applications.

\begin{theorem}
\strut (Also, see [4]) The free group factor $L(F_{N})$ is $*$-isomorphic to
the graph von Neumann algebra $M_{O_{N}}$ of the one-vertex-$N$-loop-edge
graph $O_{N},$ for all $N$ $\in $ $\Bbb{N}.$
\end{theorem}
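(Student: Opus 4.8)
The plan is to reduce the statement to the standard fact that isomorphic groups have $*$-isomorphic group von Neumann algebras, the point being that the graph groupoid of a one-vertex graph is not merely a groupoid but a genuine \emph{group}, and that for $O_{N}$ this group is exactly $F_{N}$. Concretely, I would produce a unitary $U:H_{O_{N}}\rightarrow l^{2}(F_{N})$ intertwining the canonical action $L$ with the left regular representation $\lambda $, and then conjugate by it.

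\textbf{Step 1: $\Bbb{O}_{N}\cong F_{N}$ as groups.} The graph $O_{N}$ has a single vertex $v_{O}$ and loop-edges $e_{1},\dots ,e_{N}$, with shadows $e_{1}^{-1},\dots ,e_{N}^{-1}$. Because every edge begins and ends at $v_{O}$, every pair of reduced paths is admissible, so the admissibility product is everywhere defined and $v_{O}$ is a two-sided identity; thus $\Bbb{O}_{N}$ is in fact a group. Its elements are the reduced words in the alphabet $\{e_{1}^{\pm 1},\dots ,e_{N}^{\pm 1}\}$ (no empty word occurs, by the remark in Section 2.1), and the reduction (RR) imposes precisely $e_{j}e_{j}^{-1}=v_{O}=e_{j}^{-1}e_{j}$. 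These are exactly the relations defining the free group $F_{N}$ on generators $g_{1},\dots ,g_{N}$. Hence $\Phi :\Bbb{O}_{N}\rightarrow F_{N}$, $v_{O}\mapsto e_{F_{N}}$, $e_{j}\mapsto g_{j}$, extended multiplicatively, is a group isomorphism satisfying $\Phi (w^{-1})=\Phi (w)^{-1}$.

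\textbf{Step 2: unitary equivalence of the representations.} For a one-vertex graph the Stinespring inner product of Section 2.2 collapses to $\langle \xi _{w_{1}},\xi _{w_{2}}\rangle _{O_{N}}=\delta _{w_{1},w_{2}}$, since $E(w_{1}^{-1}w_{2})\neq 0$ exactly when $w_{1}^{-1}w_{2}=v_{O}$, i.e. $w_{1}=w_{2}$; thus $\{\xi _{w}:w\in \Bbb{O}_{N}\}$ is an orthonormal basis of $H_{O_{N}}$. Setting $U\xi _{w}=\delta _{\Phi (w)}$ carries this basis bijectively onto the orthonormal basis $\{\delta _{g}:g\in F_{N}\}$ of $l^{2}(F_{N})$, so $U$ extends to a unitary. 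From $L_{w}\xi _{w'}=\xi _{ww'}$ and $\lambda (g)\delta _{h}=\delta _{gh}$ one checks directly that $UL_{w}U^{*}\delta _{\Phi (w')}=\delta _{\Phi (ww')}=\lambda (\Phi (w))\delta _{\Phi (w')}$, whence $UL_{w}U^{*}=\lambda (\Phi (w))$ for every $w\in \Bbb{O}_{N}$; the adjoints also match, since $UL_{w}^{*}U^{*}=UL_{w^{-1}}U^{*}=\lambda (\Phi (w)^{-1})=\lambda (\Phi (w))^{*}$.

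\textbf{Step 3: passing to the weak closures.} The map $X\mapsto UXU^{*}$ is a weak-operator continuous $*$-isomorphism of $B(H_{O_{N}})$ onto $B(l^{2}(F_{N}))$ carrying the generating family $\{L_{w}:w\in \Bbb{O}_{N}\}$ onto $\{\lambda (g):g\in F_{N}\}$. It therefore sends the $*$-algebra $\Bbb{C}[L(\Bbb{O}_{N})]$ onto $\Bbb{C}[\lambda (F_{N})]$, and by weak continuity it sends $M_{O_{N}}=\overline{\Bbb{C}[L(\Bbb{O}_{N})]}^{w}$ onto $\overline{\Bbb{C}[\lambda (F_{N})]}^{w}=L(F_{N})$, giving the required $*$-isomorphism. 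The only genuinely delicate point is the bookkeeping of Step 1 --- confirming that the one-vertex reduction relations coincide \emph{on the nose} with the free-group relations and that $v_{O}$ (rather than an absent empty word) serves as the identity; once the group isomorphism $\Bbb{O}_{N}\cong F_{N}$ is secured, Steps 2 and 3 are routine.
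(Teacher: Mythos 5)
Your proposal is correct and follows essentially the same route as the paper's own proof: identify the graph groupoid $\Bbb{O}_{N}$ of the one-vertex graph as a group isomorphic to $F_{N}$, build a Hilbert-space isomorphism $H_{O_{N}}\rightarrow l^{2}(F_{N})$ intertwining the canonical action $L$ with the left regular representation $\lambda$, and pass to the weak closures. If anything, your version is more careful than the paper's at two points the paper merely asserts --- that $\{\xi_{w}\}$ is an orthonormal basis (so the intertwiner is genuinely a unitary) and that unitary conjugation, being weak-operator continuous, carries $\overline{\Bbb{C}[L(\Bbb{O}_{N})]}^{w}$ onto $\overline{\Bbb{C}[\lambda(F_{N})]}^{w}$.
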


\begin{proof}
\strut Let $O_{N}$ be the one-vertex-$N$-loop-edge graph and let $\Bbb{O}%
_{N} $ be the graph groupoid of $O_{N}.$ Since $O_{N}$ has only one vertex,
say $v_{O},$ the graph groupoid $\Bbb{O}_{N}$ is in fact a group (See
Section 2.2). Indeed, the graph groupoid $\Bbb{O}_{N}$ is a (categorial)
groupoid (in the sense of Section 2.2) with its base, consisting of only one
element $v_{O}.$ Thus $\Bbb{O}_{N}$ is a group. Moreover, this group $\Bbb{O}%
_{N}$ has $N$-generators contained in the edge set

\begin{center}
$E(O_{N})$ $=$ $\{e_{1},$ ..., $e_{N}\}$
\end{center}

of $O_{N}.$ So, we can define a morphism $g$ $:$ $\Bbb{O}_{N}$ $\rightarrow $
$F_{N}$ by a map satisfying

\begin{center}
$g$ $:$ $e_{j}$ $\in $ $E(O_{N})$ $\mapsto $ $u_{j}$ $\in $ $X_{F_{N}},$
\end{center}

for all $j$ $=$ $1,$ ..., $N$ (by the possible rearrangement), where

\begin{center}
$X_{F_{N}}$ $=$ $\{u_{1},$ ..., $u_{N}\}$
\end{center}

is the generator set of the free group $F_{N}$ $=$ $<X_{F_{N}}>.$

Then this morphism satisfies that

$g(x_{i_{1}}$ ... $x_{i_{n}})$ $=$ $q_{i_{1}}$ ... $q_{i_{n}}$ in $F_{N},$

for all $x_{i_{1}},$ ..., $x_{i_{n}}$ $\in $ $E(\widehat{O_{N}}),$ for $n$ $%
\in $ $\Bbb{N},$ such that

\begin{center}
$x_{i_{j}}$ $=$ $\left\{ 
\begin{array}{ll}
e_{i_{j}} & \text{if }x_{i_{j}}\in E(O_{N}) \\ 
e_{i_{j}}^{-1} & \text{if }x_{i_{j}}\in E(O_{N}^{-1}),
\end{array}
\right. $
\end{center}

where $\widehat{O_{N}}$ is the shadowed graph of $O_{N},$ and where

\begin{center}
$q_{i_{j}}$ $=$ $\left\{ 
\begin{array}{ll}
u_{i_{j}} & \text{if }q_{i_{j}}\in X_{F_{N}} \\ 
u_{i_{j}}^{-1} & \text{if }q_{i_{j}}\in X_{F_{N}}^{-1},
\end{array}
\right. $
\end{center}

for all $j$ $=$ $1,$ ..., $n.$ (Remark that the graph groupoid $\Bbb{O}_{N}$
is generated by $E(\widehat{O_{N}}),$ as a groupoid, and hence the group $%
\Bbb{O}_{N}$ is generated by $E(O_{N}).$)

Therefore, the morphism $g$ is a group-homomorphism. Since $g$ is preserving
generators, it is bijective. So, the morphism $g$ is a group-isomorphism,
and hence $\Bbb{O}_{N}$ and $F_{N}$ are group-isomorphic.

Let $(H_{O_{N}},$ $L)$ be the canonical representation of $\Bbb{O}_{N},$ and
let $(H_{F_{N}},$ $\lambda )$ be the left regular unitary representation of $%
F_{N},$ where $H_{F_{N}}$ $=$ $l^{2}(F_{N})$ is the group Hilbert space of $%
F_{N}.$ By the existence of the group-isomorphism $g$ of $\Bbb{O}_{N}$ and $%
F_{N},$ the Hilbert spaces $H_{O_{N}}$ and $H_{F_{N}}$ are Hilbert-space
isomorphic. Indeed, there exists a linear map

\begin{center}
$\Phi $ $:$ $H_{O_{N}}$ $\rightarrow $ $H_{F_{N}}$
\end{center}

satisfying that

\begin{center}
$\Phi \left( \underset{w\in \Bbb{O}_{N}}{\sum }\text{ }t_{w}\xi _{w}\right) $
$\overset{def}{=}$ $\underset{g(w)\in g(\Bbb{O}_{N})=F_{N}}{\sum }$ $t_{w}$ $%
\xi _{g(w)},$
\end{center}

in $H_{F_{N}},$ for all $\underset{w\in \Bbb{O}_{N}}{\sum }$ $t_{w}$ $\xi
_{w}.$ It is easy to check that this linear map $\Phi $ is bounded and
bijective. i.e., $\Phi $ is a Hilbert-space isomorphism, and hence $%
H_{O_{N}} $ and $H_{F_{N}}$ are Hilbert-space isomorphic.

By the existence of $\Phi $ and $g,$ we can obtain the commuting diagram,

\begin{center}
$
\begin{array}{lll}
H_{O_{N}} & \overset{\Phi }{\longrightarrow } & H_{F_{N}} \\ 
\,\downarrow _{L} &  & \,\downarrow _{\lambda } \\ 
H_{O_{N}} & \underset{\Phi }{\longrightarrow } & H_{F_{N}}.
\end{array}
$
\end{center}

This shows that the group actions $L$ of $\Bbb{O}_{N}$ and $\lambda $ of $%
F_{N}$ are equivalent. i.e., the representations $(H_{O_{N}},$ $L)$ of $\Bbb{%
O}_{N}$ and $(H_{F_{N}},$ $\lambda )$ of $F_{N}$ are equivalent.

Therefore, the group von Neumann algebras $vN(L(\Bbb{O}_{N}))$ and $%
vN(\lambda (F_{N}))$ are $*$-isomorphic from each other in $B(\mathcal{H}),$
where

\begin{center}
$H_{O_{N}}$ $\overset{\text{Hilbert}}{=}$ $\mathcal{H}$ $\overset{\text{%
Hilbert}}{=}$ $H_{F_{N}},$
\end{center}

where $\overset{\text{Hilbert}}{=}$ means ``being Hilbert-space
isomorphic.'' i.e., the graph von Neumann algebra $M_{O_{N}}$ and the group
von Neumann algebra $L(F_{N})$ are $*$-isomorphic.
\end{proof}

\strut The above theorem shows that the study of $L(F_{N})$ is to study $%
M_{O_{N}}.$ So, to study finitely supported operators of $L(F_{N}),$ we will
study the graph operators in $M_{O_{N}}.$

\strut By Section 3.2, we can obtain the following self-adjointness
characterization on $M_{O_{N}}.$

\begin{proposition}
Let $T$ $=$ $\underset{w\in Supp(T)}{\sum }$ $t_{w}L_{w}$ be a graph
operator in $M_{O_{N}}.$ Then $T$ is self-adjoint, if and only if there
exists a subset $Y$ of

\begin{center}
$Supp(T)$ $\cap $ $FP_{r}(\widehat{O_{N}}),$
\end{center}

such that

(5.1)

\begin{center}
$Supp(T)$ $=$ $\left\{ 
\begin{array}{ll}
\{v_{O}\}\sqcup Y\sqcup Y^{-1} & \text{if }v_{O}\in Supp(T) \\ 
Y\sqcup Y^{-1} & \text{otherwise,}
\end{array}
\right. $
\end{center}

and

\begin{center}
$t_{v_{O}}$ $\in $ $\Bbb{R},$ and $t_{y}$ $=$ $\overline{t_{y^{-1}}}$ in $%
\Bbb{C},$ for all $y$ $\in $ $Y,$
\end{center}

$\square $
\end{proposition}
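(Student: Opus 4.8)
The plan is to obtain this proposition as a direct specialization of \thmref{} the general self-adjointness criterion (Theorem 3.1) to the single-vertex graph $O_{N}$. Since the entire content is bookkeeping forced by the geometry of $O_{N}$, I would not re-prove the equivalence from scratch; instead I would feed $G = O_{N}$ into Theorem 3.1 and simplify the two structural conditions.

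First I would record the combinatorial facts peculiar to $O_{N}$. As $O_{N}$ has exactly one vertex $v_{O}$, its shadowed graph satisfies $V(\widehat{O_{N}}) = \{v_{O}\}$, and (being a one-vertex-multi-loop-edge graph) its graph groupoid carries no empty word, so $\Bbb{O}_{N} = \{v_{O}\} \cup FP_{r}(\widehat{O_{N}})$. Consequently, for any graph operator $T \in M_{O_{N}}$ the support splits disjointly as $Supp(T) = Supp_{V}(T) \sqcup Supp_{V}^{c}(T)$, where $Supp_{V}(T) = Supp(T) \cap \{v_{O}\}$ is either $\{v_{O}\}$ or $\varnothing$, and $Supp_{V}^{c}(T) = Supp(T) \cap FP_{r}(\widehat{O_{N}})$. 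Next I would invoke Theorem 3.1 with $G = O_{N}$: it asserts that $T$ is self-adjoint if and only if there is a subset $X$ of $Supp(T)$ with $Supp_{V}^{c}(T) = X \sqcup X^{-1}$ and $t_{x} = \overline{t_{x^{-1}}}$ for all $x \in X$, together with $t_{v} \in \Bbb{R}$ for every $v \in Supp_{V}(T)$. Setting $Y := X$, the first two relations are exactly the claimed conditions on $Y$. Because $V(\widehat{O_{N}})$ is the singleton $\{v_{O}\}$, the vertex condition ``$t_{v} \in \Bbb{R}$ for all $v \in Supp_{V}(T)$'' is vacuous when $v_{O} \notin Supp(T)$ and collapses to ``$t_{v_{O}} \in \Bbb{R}$'' when $v_{O} \in Supp(T)$. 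Substituting $Supp_{V}^{c}(T) = Y \sqcup Y^{-1}$ into the disjoint splitting then yields the two-case formula (5.1): $Supp(T) = \{v_{O}\} \sqcup Y \sqcup Y^{-1}$ when $v_{O} \in Supp(T)$, and $Supp(T) = Y \sqcup Y^{-1}$ otherwise. Both implications follow at once, since each step is an equivalence.

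I do not expect a genuine obstacle, as the proposition is a corollary of Theorem 3.1 rather than a new result. The only points demanding care are verifying that $V(\widehat{O_{N}})$ really is a singleton and that the splitting $Supp(T) = Supp_{V}(T) \sqcup Supp_{V}^{c}(T)$ is legitimate, and then threading the case distinction (presence or absence of $v_{O}$ in $Supp(T)$) consistently through both the support equation (5.1) and the reality condition on $t_{v_{O}}$. This mild case-splitting, not any analytic difficulty, is the whole substance of the argument.
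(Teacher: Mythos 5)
Your proposal is correct and is exactly the paper's own argument: the paper disposes of this proposition with the single line ``The proof is done by Section 3.2,'' i.e.\ by specializing Theorem 3.1 to $G = O_{N}$, which is precisely what you do. Your write-up merely makes explicit the bookkeeping the paper leaves implicit (that $V(\widehat{O_{N}}) = \{v_{O}\}$, that $\Bbb{O}_{N}$ has no empty word, and the splitting $Supp(T) = Supp_{V}(T) \sqcup Supp_{V}^{c}(T)$), so it matches the intended proof in both route and substance.
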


\strut The proof is done by Section 3.2. By the above proposition, we obtain
the self-adjointness characterization of finitely supported elements in the
free group factor $L(F_{N}).$

\begin{corollary}
Let $T$ $=$ $t_{j_{k}}$ $u_{g_{j_{k}}^{-1}}$ $+$ ... $+$ $%
t_{j_{1}}u_{g_{j_{1}}^{-1}}+t_{0}u_{e}+t_{i_{1}}u_{g_{i_{1}}}+...+t_{i_{n}}u_{g_{i_{n}}} 
$ be an element of $L(F_{N}),$ where $u_{g}$ $\overset{def}{=}$ $\lambda
(g), $ for all $g$ $\in $ $F_{N},$ and $e$ is the group-identity of $F_{N}.$
Then $T$ is self-adjoint, if and only if

(5.2)

\begin{center}
$k$ $=$ $n$ in $\Bbb{N},$
\end{center}

and

\begin{center}
$t_{0}$ $\in $ $\Bbb{R},$ and $t_{i_{p}}$ $=$ $\overline{t_{j_{p}}},$ for
all $p$ $=$ $1,$ ..., $n$ $=$ $k.$
\end{center}

$\square $
\end{corollary}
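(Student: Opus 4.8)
The plan is to obtain this corollary as a direct translation of Proposition 5.2 across the $*$-isomorphism $L(F_N) \cong M_{O_N}$ of Theorem 5.1. First I would record how the group-isomorphism $g : \Bbb{O}_N \to F_N$ acts on the relevant data. Because $O_N$ has the single vertex $v_O$, the graph groupoid $\Bbb{O}_N$ is a group with unit $v_O$, and $g$ sends $v_O$ to the group-identity $e$, sends each reduced finite path in $FP_r(\widehat{O_N})$ to a nontrivial word of $F_N$, and intertwines the shadow map $w \mapsto w^{-1}$ with group inversion. Under the induced identification $L_w \leftrightarrow u_{g(w)}$ (so that $u_g^* = u_{g^{-1}}$ corresponds to $L_w^* = L_{w^{-1}}$), the element $T$ of the statement corresponds to a graph operator $\sum_w t_w L_w$ of $M_{O_N}$ with the same coefficients; hence $T$ is self-adjoint in $L(F_N)$ if and only if the corresponding graph operator is self-adjoint in $M_{O_N}$.

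Next I would apply Proposition 5.2 to that corresponding graph operator and read the conclusion back into $F_N$. Writing the support inside $F_N$, the proposition gives that self-adjointness is equivalent to the existence of a subset $Y$ of the nonidentity part $Supp(T) \cap (F_N \setminus \{e\})$ with $Supp(T) = \{e\} \sqcup Y \sqcup Y^{-1}$ when $e \in Supp(T)$ (and $Supp(T) = Y \sqcup Y^{-1}$ otherwise), together with $t_e \in \Bbb{R}$ and $t_y = \overline{t_{y^{-1}}}$ for every $y \in Y$. I would then line this up with the explicit expansion of $T$: the identity summand is $t_0 u_e$, the positive summands are $t_{i_p} u_{g_{i_p}}$ for $p = 1, \dots, n$, and the shadow summands are $t_{j_q} u_{g_{j_q}^{-1}}$ for $q = 1, \dots, k$.

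The decisive step is to extract $k = n$ and the coefficient relations from the disjoint-union condition. Choosing $Y = \{g_{i_1}, \dots, g_{i_n}\}$, the requirement $Supp(T) \cap (F_N \setminus \{e\}) = Y \sqcup Y^{-1}$ forces the set of shadow elements $\{g_{j_1}^{-1}, \dots, g_{j_k}^{-1}\}$ to coincide with $Y^{-1} = \{g_{i_1}^{-1}, \dots, g_{i_n}^{-1}\}$; in particular $k = n$, and with the indexing arranged so that $g_{j_p} = g_{i_p}$ the coefficient attached to $g_{i_p}^{-1}$ is precisely $t_{j_p}$. The relation $t_y = \overline{t_{y^{-1}}}$ evaluated at $y = g_{i_p}$ then becomes $t_{i_p} = \overline{t_{j_p}}$, while $t_e \in \Bbb{R}$ becomes $t_0 \in \Bbb{R}$ (with the degenerate case $t_0 = 0$ corresponding to $e \notin Supp(T)$); conversely these equalities rebuild the set $Y$ and verify the hypotheses of Proposition 5.2, giving the reverse implication. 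I do not expect any analytic difficulty here; the only point requiring care is the bookkeeping between the two index families $\{i_p\}$ and $\{j_q\}$, together with the observation that equality of supports genuinely forces the two cardinalities to agree.
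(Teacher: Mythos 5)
Your proposal is correct and follows exactly the route the paper intends: the paper offers no explicit proof beyond ``by the above proposition,'' meaning precisely the combination of the $*$-isomorphism $L(F_N)\cong M_{O_N}$ from Theorem 5.1 with the support decomposition $Supp(T)=\{v_O\}\sqcup Y\sqcup Y^{-1}$ of Proposition 5.2. Your filling-in of the bookkeeping (matching the shadow summands with $Y^{-1}$, forcing $k=n$, and reading off $t_0\in\Bbb{R}$ and $t_{i_p}=\overline{t_{j_p}}$) is exactly the omitted translation, so there is nothing to add.
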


\strut Now, let's consider the hyponormality.

\begin{proposition}
Let $T$ $=$ $\underset{w\in Supp(T)}{\sum }$ $t_{w}L_{w}$ be a graph
operator in $M_{O_{N}}.$ Then $T$ is hyponormal, if and only if

(5.3)

\begin{center}
$\underset{(w_{1},w_{2})\in Supp(T)^{2}}{\sum }$ $\left( \overline{t_{w_{1}}}%
t_{w_{2}}-t_{w_{1}}\overline{t_{w_{2}}}\right) $ $\geq $ $0.$
\end{center}
\end{proposition}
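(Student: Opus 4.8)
The plan is to obtain this as the one-vertex specialization of the hyponormality characterization in Theorem 4.5. First I would recall, as established in the proof of Theorem 5.1 (and in Section 2.2), that since $O_N$ has the single vertex $v_O$, its graph groupoid $\Bbb{O}_N$ is in fact a group with unit $v_O$; in particular $V(\widehat{O_N})$ $=$ $\{v_O\}$, and there is no empty word to exclude. Consequently the range and source maps are constant: $r(w)$ $=$ $w^{-1}w$ $=$ $v_O$ and $s(w)$ $=$ $ww^{-1}$ $=$ $v_O$ for every $w$ $\in$ $\Bbb{O}_N$, and every product $w_1^{-1}w_2$ and $w_1w_2^{-1}$ is a genuine element of $\Bbb{O}_N$ (never $\emptyset$).

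Next I would check that condition (4.1.7) is automatic. The sets $\Pi_{T^*T}$ $=$ $\left(Supp(T)\right)^{-1}\left(Supp(T)\right)$ and $\Pi_{TT^*}$ $=$ $\left(Supp(T)\right)\left(Supp(T)\right)^{-1}$ are nonempty (each contains $v_O$, by taking $w_1$ $=$ $w_2$), and since $r$ is identically $v_O$ we get $\mathcal{R}_{T^*T}$ $=$ $\{v_O\}$ $=$ $\mathcal{R}_{TT^*}$. Hence the inclusion (4.1.7) holds trivially, and the hyponormality of $T$ is governed entirely by the coefficient inequality (4.1.8).

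It then remains to read off what (4.1.8) says when $V(\widehat{O_N})$ $=$ $\{v_O\}$. Because $v_O$ is the only vertex and $r(w_1^{-1}w_2)$ $=$ $v_O$ $=$ $r(w_1w_2^{-1})$ for every pair, with no empty-word restriction active, the single inequality (4.1.8) at $v$ $=$ $v_O$ ranges over the whole of $Supp(T)^2$ on both sides. After relabelling the summation indices on the right-hand side, this becomes exactly $\underset{(w_1,w_2)\in Supp(T)^2}{\sum}\overline{t_{w_1}}t_{w_2}$ $\geq$ $\underset{(w_1,w_2)\in Supp(T)^2}{\sum}t_{w_1}\overline{t_{w_2}}$, i.e. the asserted inequality (5.3). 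Combining the two directions of Theorem 4.5 then yields that $T$ is hyponormal if and only if (5.3) holds.

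The only real care needed -- and the step I would treat as the crux -- is justifying that the single-vertex structure genuinely collapses the vertex-indexed family of inequalities (4.1.8) down to the one scalar condition (5.3): one must confirm both that no pair $(w_1,w_2)$ is discarded (there is no empty word in a group, so the constraint $w_1^{-1}w_2$ $\neq$ $\emptyset$ is vacuous) and that the range map being constant forces all contributions into the single bucket $v$ $=$ $v_O$. Once this bookkeeping is in place, no further estimate is required, since the statement is then a direct transcription of Theorem 4.5 specialized to $M_{O_N}$.
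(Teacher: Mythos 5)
Your proposal is correct and follows essentially the same route as the paper: the paper's proof likewise invokes the hyponormality characterization (4.1.7)--(4.1.8) of Theorem 4.5, observes that the one-vertex structure makes $\Bbb{O}_{N}$ a group so that (4.1.7) holds automatically, and collapses the vertex-indexed inequalities (4.1.8) into the single scalar inequality (5.3). Your extra care about the vacuity of the $w_{1}^{-1}w_{2}\neq \emptyset$ constraint is a slightly more explicit rendering of the paper's remark that all elements of $\Bbb{O}_{N}$ are mutually admissible via $v_{O}$, but it is the same argument.
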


\begin{proof}
\strut By Section 4.1, we can have, in general, that a graph operator $T$ is
hyponormal, if and only if

(5.4)

\begin{center}
$\mathcal{R}_{T^{*}T}$ $=$ $r\left( \Pi _{T^{*}T}\right) $ $\supseteq $ $%
\mathcal{R}_{TT^{*}}$ $=$ $r\left( \Pi _{TT^{*}}\right) ,$
\end{center}

and

(5.5)

$\qquad \left( \underset{(w_{1},w_{2})\in Supp(T)^{2},\,w_{1}^{-1}w_{2}\neq
\emptyset ,\,r(w_{1}^{-1}w_{2})=v}{\sum }\overline{t_{w_{1}}}%
t_{w_{2}}\right) $

\begin{center}
$\geq $ $\left( \underset{(y_{1},y_{2})\in Supp(T)^{2},\,y_{1}y_{2}^{-1}\neq
\emptyset ,\,r(y_{1}y_{2}^{-1})=v}{\sum }\text{ }t_{y_{1}}\text{ }\overline{%
t_{y_{2}}}\right) ,$
\end{center}

for all $v$ $\in $ $V(\widehat{O_{N}}),$ by (4.1.7), and (4.1.8). However,
the fixed graph $O_{N}$ has only one vertex $v_{O},$ and all elements of $%
\Bbb{O}_{N}$ are admissible from each other via $v_{O}$ (equivalently, $\Bbb{%
O}_{N}$ is a group). Therefore, the condition (5.4) automatically hold true,
and the inequality (5.5) can be simply re-written by

(5.6)

\begin{center}
$\underset{(w_{1},w_{2})\in Supp(T)^{2}}{\sum }$ $\overline{t_{w_{1}}}$ $%
t_{w_{2}}$ $\geq $ $\underset{(y_{1},y_{2})\in Supp(T)^{2}}{\sum }$ $%
t_{y_{1}}$ $\overline{t_{y_{2}}}.$
\end{center}

Therefore, the operator $T$ is hyponormal, if and only if (5.3) holds true.
\end{proof}

\strut By the above proposition, we can obtain that:

\begin{corollary}
Let $T$ $=$ $\underset{g\in Supp(T)}{\sum }$ $t_{g}$ $u_{g}$ be a finitely
supported element of $L(F_{N}).$ Then $T$ is hyponormal, if and only if

(5.7)

\begin{center}
$\underset{(g_{1},g_{2})\in Supp(T)^{2}}{\sum }\left( \overline{t_{g_{1}}}%
\text{ }t_{g_{2}}-t_{g_{1}}\overline{t_{g_{2}}}\right) $ $\geq $ $0.$
\end{center}

$\square $
\end{corollary}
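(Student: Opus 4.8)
The plan is to read this off from the preceding proposition, which characterizes hyponormality of graph operators in $M_{O_{N}}$, by transporting it through the $*$-isomorphism $M_{O_{N}}$ $\cong$ $L(F_{N})$ constructed in the theorem above. Everything reduces to the observation that hyponormality is a purely $*$-algebraic notion and hence is respected by $*$-isomorphisms.

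First I would record that hyponormality is both preserved and reflected by any $*$-isomorphism $\theta$ of von Neumann algebras. Since $\theta$ is multiplicative and $*$-preserving, for the self-commutator one has $\theta(T^{*}T - TT^{*})$ $=$ $\theta(T)^{*}\theta(T) - \theta(T)\theta(T)^{*}$; and because $\theta$ and its inverse are both $*$-homomorphisms, positive elements are carried to positive elements in either direction. Hence $T$ is hyponormal in $M_{O_{N}}$ if and only if $\theta(T)$ is hyponormal in $L(F_{N})$.

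Next I would fix the correspondence supplied by the theorem. Let $g$ $:$ $\Bbb{O}_{N}$ $\rightarrow$ $F_{N}$ be the group-isomorphism from its proof, and let $\theta$ $:$ $M_{O_{N}}$ $\rightarrow$ $L(F_{N})$ be the induced $*$-isomorphism sending $L_{w}$ to $u_{g(w)}$. Since $g$ is a bijection, every finitely supported $T$ $=$ $\underset{a \in Supp(T)}{\sum}$ $t_{a}$ $u_{a}$ in $L(F_{N})$ is the image $\theta(\widetilde{T})$ of the graph operator $\widetilde{T}$ $=$ $\underset{w \in g^{-1}(Supp(T))}{\sum}$ $t_{g(w)}$ $L_{w}$ in $M_{O_{N}}$, whose support is $g^{-1}(Supp(T))$ and whose coefficients match those of $T$ under the identification $w$ $\leftrightarrow$ $g(w)$.

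Finally I would apply the preceding proposition to $\widetilde{T}$: it is hyponormal exactly when $\underset{(w_{1},w_{2}) \in g^{-1}(Supp(T))^{2}}{\sum} \left( \overline{t_{g(w_{1})}} t_{g(w_{2})} - t_{g(w_{1})} \overline{t_{g(w_{2})}} \right) \geq 0$. Re-indexing this double sum along the bijection $g$ converts it verbatim into the condition (5.7) of the statement, and combining this with the preservation of hyponormality under $\theta$ from the first step yields the asserted equivalence. I expect no genuine obstacle here: the argument is pure transport of structure, and the only point deserving a line of care is the claim that a $*$-isomorphism both preserves and reflects positivity, which holds precisely because its inverse is again a $*$-homomorphism.
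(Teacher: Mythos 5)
Your proposal is correct and follows essentially the same route as the paper: the paper states this corollary with no proof body precisely because it is meant to follow from the preceding proposition on $M_{O_{N}}$ via the $*$-isomorphism $M_{O_{N}}$ $\cong$ $L(F_{N})$ of the earlier theorem, which is exactly the transport-of-structure argument you carry out. Your only addition is to make explicit the (correct) observation that $*$-isomorphisms preserve and reflect self-commutators and positivity, and that the bijection $g$ $:$ $\Bbb{O}_{N}$ $\rightarrow$ $F_{N}$ re-indexes the sum in (5.3) into the sum in (5.7) -- details the paper leaves implicit.
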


\strut By the hyponormality characterization (5.7) and by Section 4.2, we
can obtain the following corollary, too.

\begin{corollary}
Let $T$ $=$ $\underset{g\in Supp(T)}{\sum }$ $t_{g}u_{g}$ be a finitely
supported element of $L(F_{N}).$ Then $T$ is normal, if and only if

(5.8)

\begin{center}
$\underset{(g_{1},g_{2})\in Supp(T)^{2}}{\sum }\left( \overline{t_{g_{1}}}%
\text{ }t_{g_{2}}-t_{g_{1}}\overline{t_{g_{2}}}\right) $ $=$ $0.$
\end{center}

$\square $
\end{corollary}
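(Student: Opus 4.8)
The plan is to deduce this from the hyponormality characterization (5.7) of the preceding corollary, together with the standard fact (already invoked in the proof of Theorem 4.8) that an operator is normal precisely when both it and its adjoint are hyponormal. First I would transport everything to $M_{O_N}$ via the $*$-isomorphism of Theorem 5.1, so that $T$ $=$ $\sum_{g\in Supp(T)}t_{g}u_{g}$ corresponds to a graph operator and normality, hyponormality, and adjoints are all preserved; this lets me apply the results of Section 4 directly.

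Next I would record that $T$ is normal iff $T^{*}T=TT^{*}$, which is equivalent to the conjunction ``$T$ is hyponormal and $T^{*}$ is hyponormal'' (since $T$ hyponormal means $T^{*}T\geq TT^{*}$ and $T^{*}$ hyponormal means $TT^{*}\geq T^{*}T$, and the two together force equality). By Corollary 5.6, $T$ is hyponormal iff $\sum_{(g_{1},g_{2})\in Supp(T)^{2}}(\overline{t_{g_{1}}}\,t_{g_{2}}-t_{g_{1}}\overline{t_{g_{2}}})\geq 0$, which is the ``$\geq$'' half of (5.8).

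The key computational step is to apply the same criterion to $T^{*}$. Writing $T^{*}$ $=$ $\sum_{g}\overline{t_{g}}\,u_{g^{-1}}$ shows that $T^{*}$ has support $Supp(T)^{-1}$ with coefficient $\overline{t_{g^{-1}}}$ at the element $g^{-1}$. Feeding these coefficients into (5.7) and reindexing by the bijection $g\mapsto g^{-1}$ of $Supp(T)$ onto $Supp(T)^{-1}$ converts the hyponormality sum for $T^{*}$ into $\sum_{(g_{1},g_{2})\in Supp(T)^{2}}(t_{g_{1}}\overline{t_{g_{2}}}-\overline{t_{g_{1}}}\,t_{g_{2}})$, i.e.\ exactly the negative of the hyponormality sum for $T$. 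Hence $T^{*}$ is hyponormal iff that sum is $\leq 0$. Combining the two inequalities forces the sum to vanish, giving (5.8); conversely, if the sum equals $0$ then both $T$ and $T^{*}$ are hyponormal, so $T$ is normal.

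The step I expect to be most delicate is the bookkeeping that justifies collapsing the constrained, per-vertex inequalities (4.1.8)/(4.2.2) into the single unconstrained scalar sum above. This is where the one-vertex structure of $O_{N}$ is essential: since $\Bbb{O}_{N}$ is a group (Theorem 5.1) with unique vertex $v_{O}$, every product $w_{1}^{-1}w_{2}$ is a genuine group element with $r(w_{1}^{-1}w_{2})=v_{O}$, so the set condition (4.2.1) is automatic and the per-vertex sums merge into one, exactly as already carried out in Proposition 5.5. Once that collapse is in hand, the remainder is the elementary ``$\geq 0$ and $\leq 0$ implies $=0$'' argument, so no further estimates are needed.
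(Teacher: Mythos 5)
Your proposal is correct and takes essentially the same route as the paper: the paper's entire proof of this corollary is the citation of the hyponormality characterization (5.7) together with the Section 4.2 fact that $T$ is normal if and only if both $T$ and $T^{*}$ are hyponormal, which are exactly the two ingredients you combine, with your adjoint-reindexing computation (showing the hyponormality sum for $T^{*}$ is the negative of that for $T$) making the collapse to the single equality (5.8) explicit. The only flaw is a harmless notational slip: at the element $g^{-1}$ the coefficient of $T^{*}$ is $\overline{t_{g}}$, not $\overline{t_{g^{-1}}}$, but the reindexed sum you then write down is the correct one, so the argument is unaffected.
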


\strut Finally, let's consider the unitary property of finitely supported
elements of $L(F_{N}).$ In Section 3.3, we obtain that: a graph operator $T$
of the graph von Neumann algebra $M_{O_{N}}$ of the one-vertex-$N$-loop-edge
graph $O_{N}$ is unitary, if and only if

\begin{center}
$\left( Supp(T)\right) ^{-1}\left( Supp(T)\right) $ $=$ $\{v_{O}\},$
\end{center}

and

\begin{center}
$\underset{(w_{1},w_{2})\in Supp(T)^{2}}{\sum }$ $\overline{t_{w_{1}}}$ $%
t_{w_{2}}$ $=$ $1,$ in $\Bbb{C},$
\end{center}

where $v_{O}$ is the unique vertex of $O_{N},$ for $N$ $\in $ $\Bbb{N}.$
Therefore, we can obtain that:

\begin{proposition}
Let $T$ $=$ $\underset{g\in Supp(T)}{\sum }$ $t_{g}u_{g}$ be a finitely
supported element of $L(F_{N}).$ Then $T$ is unitary, if and only if

(5.9)

\begin{center}
$\left( Supp(T)\right) ^{-1}\left( Supp(T)\right) $ $=$ $\{e_{F_{N}}\},$
\end{center}

and

(5.10)

\begin{center}
$\underset{(g_{1},g_{2})\in Supp(T)^{2}}{\sum }$ $\overline{t_{g_{1}}}$ $%
t_{g_{2}}$ $=$ $1,$ in $\Bbb{C}.$
\end{center}

$\square $
\end{proposition}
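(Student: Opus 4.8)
The plan is to obtain this proposition as an immediate transport of the one-vertex unitary characterization for $M_{O_N}$ across the $*$-isomorphism $L(F_N)\cong M_{O_N}$ established at the start of this section. Since unitarity (the conjunction $U^{*}U=UU^{*}=1$) is preserved by any $*$-isomorphism, it suffices to pull $T$ back to $M_{O_N}$, apply the characterization already recorded in Section 3.3, and rewrite the resulting two conditions in group-theoretic language.

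First I would recall that the $*$-isomorphism of this section arises from a group-isomorphism $\phi:\Bbb{O}_N\rightarrow F_N$ sending each edge $e_j$ to a generator $u_j$ of $F_N$, and carrying $L_w\mapsto u_{\phi(w)}$ for every $w\in\Bbb{O}_N$. Consequently the element $T=\underset{g\in Supp(T)}{\sum}t_g\,u_g$ of $L(F_N)$ is the image of the graph operator $T_0=\underset{w\in\phi^{-1}(Supp(T))}{\sum}t_{\phi(w)}\,L_w\in M_{O_N}$, whose support is $\phi^{-1}(Supp(T))$ and whose coefficients agree with those of $T$ under the bijection $w\leftrightarrow\phi(w)$. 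Because a $*$-isomorphism preserves adjoints, products, and the unit, $T$ is unitary in $L(F_N)$ if and only if $T_0$ is unitary in $M_{O_N}$.

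It then remains to invoke the one-vertex characterization recalled just above: $T_0$ is unitary if and only if $\left(Supp(T_0)\right)^{-1}\left(Supp(T_0)\right)=\{v_O\}$ and $\underset{(w_1,w_2)\in Supp(T_0)^2}{\sum}\overline{t_{w_1}}\,t_{w_2}=1$, and to translate these through $\phi$. Since $\phi$ is a group-isomorphism it intertwines the groupoid multiplication of $\Bbb{O}_N$ with the group multiplication of $F_N$, sends inverses to inverses, and carries the unique vertex $v_O$ (the base, hence the identity, of the group $\Bbb{O}_N$) to $e_{F_N}$. Under this dictionary $\left(Supp(T_0)\right)^{-1}\left(Supp(T_0)\right)=\{v_O\}$ becomes precisely $\left(Supp(T)\right)^{-1}\left(Supp(T)\right)=\{e_{F_N}\}$, which is (5.9), while the coefficient sum transports verbatim to (5.10).

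The only genuine point of care is to verify that the isomorphism is exactly support- and coefficient-preserving under $\phi$ — that no rescaling or relabeling of the coefficients is introduced — and that $\phi(v_O)=e_{F_N}$; both follow directly from the explicit construction of $\phi$ and of the Hilbert-space isomorphism $\Phi$ in the $*$-isomorphism theorem. I expect no substantive obstacle: because $O_N$ has a single vertex, the alternating-disconnectedness hypothesis of the unitary theorem for graphs with $\left|V(G)\right|\geq 2$ is vacuous and plays no role, so the whole argument reduces to transporting the one-vertex characterization along a structure-preserving $*$-isomorphism.
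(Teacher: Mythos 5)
Your proposal is correct and follows the paper's own route exactly: the paper derives this proposition by combining the one-vertex unitary characterization of Section 3.3 (the theorem for $M_{O_{n}}$) with the $*$-isomorphism $L(F_{N})\cong M_{O_{N}}$ of Theorem 5.1, which is precisely your transport argument. Your additional care in checking that the isomorphism preserves supports, coefficients, and sends $v_{O}$ to $e_{F_{N}}$ simply makes explicit what the paper leaves implicit.
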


\strut

\strut

\strut

\textbf{Appendix A. Categorial Groupoids and Groupoid Actions\strut \strut }

\strut

We say an algebraic structure $(\mathcal{X},$ $\mathcal{Y},$ $s,$ $r)$ is a 
\emph{(categorial) groupoid}, if it satisfies that: (i) $\mathcal{Y}$ $%
\subset $ $\mathcal{X},$ (ii) for all $x_{1},$ $x_{2}$ $\in $ $\mathcal{X},$
there exists a partially-defined binary operation $(x_{1},$ $x_{2})$ $%
\mapsto $ $x_{1}$ $x_{2},$ for all $x_{1},$ $x_{2}$ $\in $ $\mathcal{X},$
depending on the \emph{source map} $s$ and the\emph{\ range map} $r$
satisfying the followings;\strut

(ii-1) $x_{1}$ $x_{2}$ is well-determined, whenever $r(x_{1})$ $=$ $s(x_{2})$
and in this case,

\begin{center}
$s(x_{1}$ $x_{2})$ $=$ $s(x_{1})$ and $r(x_{1}$ $x_{2})$ $=$ $r(x_{2}),$
\end{center}

\qquad \ for $x_{1},$ $x_{2}$ $\in $ $\mathcal{X},$\strut

(ii-2) $(x_{1}$ $x_{2})$ $x_{3}$ $=$ $x_{1}$ $(x_{2}$ $x_{3})$, if they are
well-determined in the sense of (ii-1), for $x_{1},$ $x_{2},$ $x_{3}$ $\in $ 
$\mathcal{X},$\strut

(ii-3) if $x$ $\in $ $\mathcal{X},$ then there exist $y,$ $y^{\prime }$ $\in 
$ $\mathcal{Y}$ such that $s(x)$ $=$ $y$ and $r(x)$ $=$ $y^{\prime },$
satisfying $x$ $=$ $y$ $x$ $y^{\prime }$ (Here, the elements $y$ and $%
y^{\prime }$ are not necessarily distinct),\strut \strut

(ii-4) if $x$ $\in $ $\mathcal{X},$ then there exists a unique element $%
x^{-1}$ for $x$ satisfying

\begin{center}
$x$ $x^{-1}$ $=$ $s(x)$ and $x^{-1}$ $x$ $=$ $r(x).$\strut
\end{center}

The subset $\mathcal{Y}$ of a groupoid $\mathcal{X}$ is said to be the \emph{%
base of} $\mathcal{X}$.

Thus, every group $\Gamma $ is a groupoid $\Gamma $ $=$ $(\Gamma ,$ $%
\{e_{\Gamma }\}$ $s,$ $r)$ (and hence $s$ $=$ $r$ on $\Gamma $), where $%
e_{\Gamma }$ is the group-identity of $\Gamma .$ Conversely, every groupoid
with its base having the cardinality 1 is a group.

Remark that we can naturally assume that there exists the \emph{empty element%
} $\emptyset $ in a groupoid $\mathcal{X}.$ The empty element $\emptyset $
means the products $x_{1}$ $x_{2}$ are not well-defined, for some $x_{1},$ $%
x_{2}$ $\in $ $\mathcal{X}.$ Notice that if $\left| \mathcal{Y}\right| $ $=$ 
$1$ (equivalently, if $\mathcal{X}$ is a group), then the empty word $%
\emptyset $ is not contained in the groupoid $\mathcal{X}.$ However, in
general, whenever $\left| \mathcal{Y}\right| $ $\geq $ $2,$ a groupoid $%
\mathcal{X}$ always contain the empty word. So, if there is no confusion,
the existence of the empty element $\emptyset $ is automatically assumed,
whenever the base $\mathcal{Y}$ of $\mathcal{X}$ contains more than one
element. Under this setting, the partially-defined binary operation on $%
\mathcal{X}$ is well-defined on $\mathcal{X}$ (more precisely, on $\mathcal{X%
}$ $\cup $ $\{\emptyset \},$ which is identified with $\mathcal{X},$
whenever $\left| \mathcal{Y}\right| $ $\geq $ $2$).

It is easy to check that our graph groupoid $\Bbb{G}$ of a countable
directed graph $G$ is indeed a groupoid with its base $V(\widehat{G}).$
i.e., the graph groupoid $\Bbb{G}$ of a graph $G$ is a groupoid

\begin{center}
$\Bbb{G}$ $=$ $(\Bbb{G},$ $V(\widehat{G}),$ $s$, $r)$,
\end{center}

satisfying

\begin{center}
$s(w)$ $=$ $s(v$ $w)$ $=$ $v$ and $r(w)$ $=$ $r(w$ $v^{\prime })$ $=$ $%
v^{\prime },$
\end{center}

for all $w$ $=$ $v$ $w$ $v^{\prime }$ $\in $ $\Bbb{G}$ with $v,$ $v^{\prime
} $ $\in $ $V(\widehat{G}).$ i.e., the vertex set $V(\widehat{G})$ $=$ $V(G)$
is the base of $\Bbb{G}.$\strut

Let $\mathcal{X}_{k}$ $=$ $(\mathcal{X}_{k},$ $\mathcal{Y}_{k},$ $s_{k},$ $%
r_{k})$ be groupoids, for $k$ $=$ $1,$ $2.$ We say that a map $f$ $:$ $%
\mathcal{X}_{1}$ $\rightarrow $ $\mathcal{X}_{2}$ is a \emph{%
groupoid-morphism}, if

(i)$\ \ \ f$ is a function,

(ii)$\ \ f(\mathcal{Y}_{1})$ $\subseteq $ $\mathcal{Y}_{2},$

(iii) $s_{2}\left( f(x)\right) $ $=$ $f\left( s_{1}(x)\right) $ in $\mathcal{%
X}_{2},$ for all $x$ $\in $ $\mathcal{X}_{1}$, and

(iv) $r_{2}\left( f(x)\right) $ $=$ $f\left( r_{1}(x)\right) $ in $\mathcal{X%
}_{2},$ for all $x$ $\in $ $\mathcal{X}_{1}.$

Equivalently, $f$ is a groupoid-morphism, if and only if (i)$^{\prime }$ $f$
is a function, (ii)$^{\prime }$ $f$ satisfies

\begin{center}
$f(x_{1}x_{2})$ $=$ $f(x_{1})$ $f(x_{2})$ in $\mathcal{X}_{2},$
\end{center}

for all $x_{1},$ $x_{2}$ $\in $ $\mathcal{X}_{1}.$

If a groupoid-morphism $f$ is bijective, then we say that $f$ is a \emph{%
groupoid-isomorphism}, and the groupoids $\mathcal{X}_{1}$ and $\mathcal{X}%
_{2}$ are said to be \emph{groupoid-isomorphic}.\strut

Notice that, if two countable directed graphs $G_{1}$ and $G_{2}$ are \emph{%
graph-isomorphic}, via a graph-isomorphism $g$ $:$ $G_{1}$ $\rightarrow $ $%
G_{2},$ in the sense that:

(i)$\ \ \ g$ is bijective from $V(G_{1})$ onto $V(G_{2}),$

(ii)$\ \ g$ is bijective from $E(G_{1})$ onto $E(G_{2}),$

(iii) $g(e)$ $=$ $g(v_{1}$ $e$ $v_{2})$ $=$ $g(v_{1})$ $g(e)$ $g(v_{2})$ in $%
E(G_{2}),$

for all $e$ $=$ $v_{1}$ $e$ $v_{2}$ $\in $ $E(G_{1}),$ with $v_{1},$ $v_{2}$ 
$\in $ $V(G_{1}),$ then the graph groupoids $\Bbb{G}_{1}$ and $\Bbb{G}_{2}$
are groupoid-isomorphic. More generally, if two graphs $G_{1}$ and $G_{2}$
have graph-isomorphic shadowed graphs $\widehat{G_{1}}$ and $\widehat{G_{2}}%
, $ then $\Bbb{G}_{1}$ and $\Bbb{G}_{2}$ are groupoid-isomorphic (See [10]
and [11]).\strut \strut \strut

Let $\mathcal{X}$ $=$ $(\mathcal{X},$ $\mathcal{Y},$ $s,$ $r)$ be a
groupoid. We say that this groupoid $\mathcal{X}$ \emph{acts on a set }$Y,$
if there exists a groupoid action $\pi $ of $\mathcal{X}$ such that: (i) $%
\pi (x)$ $:$ $Y$ $\rightarrow $ $Y$ is a well-defined function, for all $x$ $%
\in $ $\mathcal{X},$ and (ii) $\pi $ satisfies

\begin{center}
$\pi (x_{1}x_{2})$ $=$ $\pi (x_{1})$ $\circ $ $\pi (x_{2})$ on $Y,$
\end{center}

for all $x_{1},$ $x_{2}$ $\in $ $\mathcal{X}$, where ($\circ $) means the
usual composition of maps. We call the set $Y,$ a $\mathcal{X}$\emph{-set}.

Let $\mathcal{X}_{1}$ $\subset $ $\mathcal{X}_{2}$ be a subset, where $%
\mathcal{X}_{2}$ $=$ $(\mathcal{X}_{2},$ $\mathcal{Y}_{2},$ $s,$ $r)$ is a
groupoid. Assume that $\mathcal{X}_{1}$ $=$ $(\mathcal{X}_{1},$ $\mathcal{Y}%
_{1},$ $s,$ $r),$ itself, is a groupoid, where $\mathcal{Y}_{1}$ $=$ $%
\mathcal{X}_{1}$ $\cap $ $\mathcal{Y}_{2}.$ Then we say that the groupoid $%
\mathcal{X}_{1}$ is a \emph{subgroupoid} of $\mathcal{X}_{2}.$\strut \strut

Recall that we say a graph $G_{1}$ is a \emph{full-subgraph} of a countable
directed graph $G_{2},$ if\strut

\begin{center}
$E(G_{1})$ $\subseteq $ $E(G_{2})$
\end{center}

and

\begin{center}
$V(G_{1})$ $=$ $\{v$ $\in $ $V(G_{1})$ $:$ $e$ $=$ $v$ $e$ or $e$ $=$ $e$ $%
v, $ $\forall $ $e$ $\in $ $E(G_{1})\}.$\strut
\end{center}

Remark the difference between full-subgraphs and subgraphs: We say that $%
G_{1}^{\prime }$ is a \emph{subgraph} of $G_{2},$ if\strut

\begin{center}
$V(G_{1}^{\prime })$ $\subseteq $ $V(G_{2})$
\end{center}

and

\begin{center}
$E(G_{1}^{\prime })$ $=$ $\{e$ $\in $ $E(G_{2})$ $:$ $e$ $=$ $v_{1}$ $e$ $%
v_{2},$ for $v_{1},$ $v_{2}$ $\in $ $V(G_{1}^{\prime })\}.$\strut \strut
\end{center}

\strut Also, if a graph $V$ is a graph with $V(V)$ $\subseteq $ $V(G_{2}),$
and $E(V)$ $=$ $\varnothing ,$ then we call $V,$ a \emph{vertex subgraph of} 
$G_{2}.$

We will say that $G_{1}$ is a \emph{part} of $G_{2},$ if $G_{1}$ is either a
full-subgraph of $G_{2},$ or a subgraph of $G_{2},$ or a vertex subgraph of $%
G_{2}.$ It is easy to show that the graph groupoid $\Bbb{G}_{1}$ of $G_{1}$
is a subgroupoid of the graph groupoid $\Bbb{G}_{2}$ of $G_{2},$ whenever $%
G_{1}$ is a part of $G_{2}.$\strut

\strut

\textbf{Appendix B. Operator-Theoretic Properties}

\strut

Let $H$ be an arbitrary separable Hilbert space equipped with its inner
product $<,>.$ i.e., the inner product $<,>$ on $H$ is the sesquilinear form,

\begin{center}
$<,>$ $:$ $H$ $\times $ $H$ $\rightarrow $ $\Bbb{C},$
\end{center}

satisfying that:

(i)\ \ $\ <t_{1}\xi _{1}+t_{2}\xi _{2},$ $\eta >$ $=$ $t_{1}<\xi _{1},$ $%
\eta >$ $+$ $t_{2}$ $<\xi _{2},$ $\eta >,$

(ii)\  $<\xi ,$ $\eta >$ $=$ $\overline{<\eta ,\text{ }\xi >},$

(iii) $<\xi ,$ $\xi >$ $\;\geq $ $0,$ and equality holds, if and only if $%
\xi $ $=$ $0_{H},$

for all $\xi ,$ $\xi _{k},$ $\eta ,$ $\eta _{k}$ $\in $ $H,$ and $t_{k}$ $%
\in $ $\Bbb{C},$ where $\overline{t}$ mean the conjugates of $t,$ and $0_{H}$
means the zero vector in $H.$ As usual, let $B(H)$ be the operator algebra
consisting of all (bounded linear) operators on $H.$

For any operator $T$ $\in $ $B(H),$ there exists a unique operator $T^{*}$
satisfying

\begin{center}
$<T\xi ,$ $\eta >$ $=$ $<\xi ,$ $T^{*}\eta >,$
\end{center}

for all $\xi ,$ $\eta $ $\in $ $H.$ This operator $T^{*}$ is called the 
\emph{adjoint of} $T.$

\begin{definition}
Let $T$ $\in $ $B(H)$ be an operator.

(1) We say that an operator $T$ is \emph{self-adjoint}, if the adjoint $T^{*}
$ of\emph{\ }$T$ is identical to $T,$ i.e.,

\begin{center}
$T$ is self-adjoint $\overset{def}{\Longleftrightarrow }$ $T^{*}$ $=$ $T$ in 
$B(H).$
\end{center}

(2) An operator $T$ is said to be \emph{normal}, if the product $T^{*}T$ of $%
T^{*}$ and $T$ is identical to the product $TT^{*},$ on $H,$ i.e.,

\begin{center}
$T$ is normal $\overset{def}{\Longleftrightarrow }$ $T^{*}T$ $=$ $TT^{*}$ in 
$B(H).$
\end{center}

(3) We call $T$ a \emph{unitary}, if it is normal, and $T^{*}T$ $=$ $TT^{*}$
are identical to the identity operator $1_{H}$ on $H,$ i.e.,

\begin{center}
$T$ is unitary $\overset{def}{\Longleftrightarrow }$ $T^{*}T$ $=$ $1_{H}$ $=$
$TT^{*}$ in $B(H)$
\end{center}

\strut (4) An operator $T$ $\in $ $B(H)$ is called a \emph{projection}, if
it is self-adjoint and idempotent, in the sense that $T^{2}$ $=$ $T$ on $H.$
i.e.,

\begin{center}
$T$ is a projection $\overset{def}{\Longleftrightarrow }$ $T^{*}$ $=$ $T$ $=$
$T^{2}$ in $B(H).$
\end{center}

(5) We say an operator $T$ is \emph{positive} on $H,$ if

\begin{center}
$<T\xi ,$ $\xi >$ \ $\geq $ $0,$ for all $\xi $ $\in $ $H$ with $\left\| \xi
\right\| $ $=$ $1,$
\end{center}

where $\left\| \xi \right\| $ $\overset{def}{=}$ $\sqrt{<\xi ,\text{ }\xi >}$
is the Hilbert norm of $\xi $, for all $\xi $ $\in $ $H.$

(6) An operator $T$ is said to be \emph{hyponormal}, if the operator $T^{*}T$
$-$ $TT^{*}$ is positive on $H.$
\end{definition}

\strut Such properties of operators are well-known in operator theory. Also,
if an operator $T$ has one of the above properties, then it is a ``good''
operator in the theory. For instance, if $T$ is normal, then it satisfies
the spectral mapping theorem, and hence $f(T)$ is again normal, for all
continuous functions on $\Bbb{C},$ etc. By definition, we can check that:

(2.3.1) If $T$ is self-adjoint, then $T$ is normal.

(2.3.2) If $T$ is unitary, then $T$ is normal.

(2.3.3) Every projection is self-adjoint.

(2.3.4) Every normal operator is hyponormal.

(2.3.5) If $T$ is unitary, then $T$ is invertible, moreover, $T^{*}$ $=$ $%
T^{-1}.$

Clearly, the converses of the above facts does not hold true, in general.

We say that an operator $T$ is a \emph{partial isometry}, if the product $%
T^{*}T$ of the adjoint $T^{*}$ and $T$ is a projection. The following
characterization is also known: $T$ is a partial isometry, if and only if $%
TT^{*}T$ $=$ $T,$ if and only if $T^{*}$ is a partial isometry, if and only
if $T^{*}TT^{*}$ $=$ $T^{*}.$ In particular, the projections $T^{*}T$ and $%
TT^{*}$ are called the \emph{initial projection}, and the \emph{final
projection} of $T,$ respectively. i.e., the projection $T^{*}T$ (resp., the
projection $TT^{*}$) send the elements of $H$ into the elements of the
subspace $H_{init}^{T}$ (resp., $H_{fin}^{T}$) of $H.$ We call the subspaces 
$H_{init}^{T}$ and $H_{fin}^{T}$ of $H,$ induced by a partial isometry $T$,
the \emph{initial subspace} and the \emph{final subspace} of $T$ in $H,$
respectively.

A partial isometry $T$ satisfying that $T^{*}T$ $=$ $1_{H}$ is called an 
\emph{isometry}. Keep in mind that, even though $T^{*}T$ $=$ $1_{H},$ it is
possible that $TT^{*}$ $\neq $ $1_{H}.$ Clearly, if $TT^{*}$ $=$ $1_{H},$
for an isometry $T,$ then this isometry $T$ becomes a unitary. A partial
isometry $T,$ satisfying $TT^{*}$ $=$ $1_{H},$ is called a \emph{co-isometry}%
.

In many cases, the projections $T^{*}T$ and $TT^{*}$ of a partial isometry $%
T $ are distinct from each other, whenever $H_{init}^{T}$ and $H_{fin}^{T}$
are different in $H.$ This shows that partial isometries are not normal, in
general. For instance, let

\begin{center}
$T$ $=$ $\left( 
\begin{array}{ll}
0 & 0 \\ 
1 & 0
\end{array}
\right) ,$ with its adjoint $T^{*}$ $=$ $\left( 
\begin{array}{ll}
0 & 1 \\ 
0 & 0
\end{array}
\right) ,$
\end{center}

on $H$ $=$ $\Bbb{C}^{\oplus 2}.$ Then it is a partial isometry, since $%
TT^{*}T$ $=$ $T.$ And, it has its initial projection and final projection as
follows:

\begin{center}
$T^{*}T$ $=$ $\left( 
\begin{array}{ll}
1 & 0 \\ 
0 & 0
\end{array}
\right) $, and $TT^{*}$ $=$ $\left( 
\begin{array}{ll}
0 & 0 \\ 
0 & 1
\end{array}
\right) ,$
\end{center}

on $H.$ We can easily see that $T^{*}T$ $\neq $ $TT^{*},$ and hence $T$ is
not normal.

Let $T$ $\in $ $B(H)$ be an operator. Then $T$ has its \emph{spectrum }$%
spec(T),$ defined by a subset

\begin{center}
$spec(T)$ $\overset{def}{=}$ $\{t$ $\in $ $\Bbb{C}$ $:$ $T$ $-$ $t$ $1_{H}$
is not invertible on $H\},$
\end{center}

in $\Bbb{C}.$ It is well-known that every spectrum $spec(T)$ is nonempty and
compact in $\Bbb{C},$ whenever $T$ is a (bounded linear) operator on a
(complex) Hilbert space $H.$

This numerical data for $T$ is very valuable to analyze the operators. In
particular, every normal operator $T$ can be understood (or regarded) as a
complex-valued function, satisfying

\begin{center}
$\int_{spec(T)}$ $t$ $dE,$
\end{center}

where $E$ is the suitable (operator-valued) measure on $spec(T),$ called the%
\emph{\ spectral measure}. Thus, if $f$ is a $\Bbb{C}$-valued continuous
map, then

\begin{center}
$f(T)$ $=$ $\int_{spec(T)}$ $f(t)$ $dE(t).$
\end{center}

i.e., the \emph{spectral mapping theorem} holds for normal operators. Thus
all operators $f(T)$ are normal, too, whenever $T$ is normal.

Also, by Gelfand, the $C^{*}$-algebra $C^{*}(T),$ generated by $T,$ is $*$%
-isomorphic to the $C^{*}$-algebra $C(spec(T)),$ consisting of all
continuous $\Bbb{C}$-valued functions on $spec(T).$ (However, finding
spectra of operators is not easy at all.)

\strut \strut \strut 

\strut

\strut

\strut \textbf{References}\strut

\strut

\strut

{\small [1] \ \ A. Gibbons and L. Novak, Hybrid Graph Theory and Network
Analysis, ISBN: 0-521-46117-0, (1999) Cambridge Univ. Press.}

{\small [2]\strut \ \ \ \strut B. Solel, You can see the arrows in a Quiver
Operator Algebras, (2000), preprint.}

{\small [3] \ \ I. Cho, Hyponormality of Toeplitz Operators with
Trigonometric Polynomial Symbols, Master Degree Thesis, (1999) Sungkyunkwan
Univ.}

{\small [4] \ \ \strut I. Cho, Graph Groupoids and Partial Isometries, ISBN:
978-3-8383-1397-9, (2009) LAP Publisher.\strut }

{\small [5] \ \ \strut I. Cho, and P. E. T. Jorgensen, }$C^{*}${\small %
-Subalgebras Generated by Partial Isometries, JMP, DOI: 10.1063/1.3056588,
(2009).}

{\small [6] \ \ \strut I. Cho, and P. E. T. Jorgensen, }$C^{*}${\small %
-Subalgebras Generated by a Single Operator in }$B(H)${\small , ACTA Appl.
Math., 108, (2009) 625 - 664.}

{\small [7] \ \ \strut I. Cho, and P. E. T. Jorgensen, Measure Framings on
Graphs and Corresponding von Neumann Algebras, (2009) Preprint.}

{\small [8] \ \ \strut I. Raeburn, Graph Algebras, CBMS no 3, AMS (2005).}

{\small [9] \ \ \strut P. D. Mitchener, }$C^{*}${\small -Categories,
Groupoid Actions, Equivalent KK-Theory, and the Baum-Connes Conjecture,
arXiv:math.KT/0204291v1, (2005), Preprint.}

{\small [10] R. Gliman, V. Shpilrain and A. G. Myasnikov (editors),
Computational and Statistical Group Theory, Contemporary Math, 298, (2001)
AMS.}

{\small [11] F. Radulescu, Random Matrices, Amalgamated Free Products and
Subfactors of the $C^{*}$- Algebra of a Free Group, of Noninteger Index,
Invent. Math., 115, (1994) 347 - 389.}

{\small [12] P. R. Halmos. Hilbert Space Problem Book (2-nd Ed), ISBN:
0-387-90685-1, (1982) Springer-Verlag.}

{\small [13] T. Yosino, Introduction to Operator Theory, ISBN:
0-582-23743-2, (1993) Longman Sci. \& Tech.}

{\small [14] F. W. Stinespring, Positive Functions on }$C^{*}${\small %
-Algebras, Proc. Amer. Math. Soc., vol 6, (1955) 211 - 216.}

{\small [15] M. B. Stefan, Indecomposability of Free Group Factors over
Nonprime Subfactors and Abelian Subalgebras, Pacific J. Math., 219, no. 2,
(2005) 365 - 390.}

{\small [16] N. Tanaka, Conjugacy Classes of Zero Entropy Automorphisms on
Free Group Factors, Nihonkai M. J., 6, no. 2, (1995) 171 - 175. }

\end{document}